\numberwithin{equation}{section}
\begin{document}
\newtheorem{thm}{Theorem}[section]
\newtheorem{remark}{Remark}
\newtheorem{cor}[thm]{Corollary}
\newtheorem{lem}[thm]{Lemma}%[chapter]
\newtheorem{prop}[thm]{Proposition}
%\numberwithin{equation}{section}
% MATH -----------------------------------------------------------

\newcommand{\norm}[1]{\left\Vert#1\right\Vert}
\newcommand{\abs}[1]{\left\vert#1\right\vert}
\newcommand{\set}[1]{\left\{#1\right\}}
\newcommand{\Real}{\mathbb R}
\newcommand{\eps}{\varepsilon}
\newcommand{\To}{\longrightarrow}
\newcommand{\BX}{\mathbf{B}(X)}
\newcommand{\A}{\mathcal{A}}
\newcommand{\ts}{\textstyle}
\newcommand{\tg}{\mbox{\rm tg}}
\newcommand{\ctg}{\mbox{\rm ctg}}
\newcommand{\atg}{\tg^{-1}}
\newcommand{\actg}{\ctg^{-1}}
\newcommand{\asin}{\sin^{-1}}
\newcommand{\acos}{\cos^{-1}}
\newcommand{\dps}{\displaystyle}
\newcommand{\fnz}{\footnotesize}
\newcommand{\D}{\displaystyle}
\newcommand{\df}[2]{\dfrac{\D#1}{\D#2}}
\newcommand{\scp}{\scriptstyle}

\baselineskip 18pt

\title[]{Viscous shock waves of Burgers equation with fast diffusion and singularity}

\thanks{ Emails: shufangxu@shu.edu.cn (S. Xu),  ming.mei@mcgill.ca (M. Mei), mmei@champlaincollege.qc.ca (M. Mei),  jean-christophe.nave@mcgill.ca (J.-C. Nave),  mathwcsheng@shu.edu.cn (W. Sheng)}
%\qquad $^\dag$ Corresponding author.}%
\author[Xu, Mei, Nave and Sheng]{Shufang Xu$^{1,3}$, Ming Mei$^{2,3}$, Jean-Christophe Nave$^{3}$ and Wancheng Sheng$^{1}$}

\dedicatory{$^1$Department of Mathematics, Shanghai University,
            Shanghai, 200444, P. R. China\\
            $^2$Department of Mathematics, Champlain College Saint-Lambert, Saint-Lambert, Quebec, J4P3P2, Canada\\
            $^3$Department of Mathematics and Statistics, McGill University, Montreal, Quebec, H3A2K6, Canada
            }

% -------------------------------------------------------
\noindent

\begin{abstract}
In this paper, we study the asymptotic stability of viscous shock waves for Burgers' equation with fast diffusion $u_t+f(u)_x=\mu (u^m)_{xx}$ on $\mathbb{R} \times (0, +\infty)$ when $0<m<1$. For the proposed constant states $u_->u_+=0$, the equation with fast diffusion $(u^m)_{xx}=m\left(\frac{u_x}{u^{1-m}}\right)_x$ processes a strong singularity at $u_+=0$, which causes the stability study to be challenging. We observe that, there exist two different types of viscous shocks, one is the non-degenerate shock satisfying Lax's entropy condition with fast algebraic decay to the singular state $u_+=0$, which causes  much strong singularity  to the system in the form of $m\left(\frac{u_x}{u^{1-m}}\right)_x$, and the other is the degenerate viscous shock with slow algebraic decay to $u_+=0$, which makes  less strong singularity  to the system. In order to overcome the singularity at $u_+=0$, we technically use the weighted energy method and develop a  new strategy where the weights related to the shock waves are carefully selected, while the chosen weights for the non-degenerate case are stronger than the degenerate case. Numerical simulations are also carried out in different cases to illustrate and validate our theoretical results. In particular, we numerically approximate the solution for different value of $0<m<1$, and find that the shapes of shock waves become steeper when the singularity $\left(\frac{u_x}{u^{1-m}}\right)_x$ is stronger as $m\rightarrow 0$, which indicates that the effect of singular fast diffusion on the solution is essential.

\vskip 8pt

\noindent%
{\sc Keywords.}~ Burgers' equation, fast diffusion, viscous shock waves, stability, energy method.

\vskip 8pt
\end{abstract}

\maketitle

\section{Introduction}

This paper is devoted to studying the Cauchy problem for the one-dimensional Burgers' equation with  fast diffusion
\begin{equation}\label{system}
  u_t+f(u)_x=\mu (u^m)_{xx},\;\;x\in \rm{R},\,t>0,
\end{equation}
subjected to the initial data
\begin{equation}\label{ini}
u(0,x)=u_0(x)\rightarrow u_\pm,~\mbox{as}~x\rightarrow\pm\infty,
\end{equation}
where $m\in(0,1)$ is the index of fast diffusion, $\mu>0$ is the diffusion coefficient, and  $u_\pm$ are the constant states satisfying
\begin{equation}
  u_->u_+=0.
\end{equation}
It is clear that equation \eqref{system} with fast diffusion $\mu (u^m)_{xx}=\mu m \left(\df{u_x}{u^{1-m}}\right)_x$ possesses a strong singularity when $u=0$.

Introduced by Bateman \cite{bat} for fluid dynamics, and pioneered by J. M. Burgers \cite{bur1,bur2} for turbulence of fluid dynamics, the so-called Burgers' equation is one of the most fundamental partial differential equations in fluid/gas dynamic. From the point view of applications, the effects of fluid viscosity are related to the velocity $u(t,x)$, for instance in the porous media equations. We give now some details for equation \eqref{system}.

When $m=1$, eq. \eqref{system} possesses \emph{regular diffusion}:
\[
u_t +f(u)_x =\mu u_{xx}.
\]
When $m>1$, eq. \eqref{system} has \emph{slow diffusion}, which is usually called the degenerate diffusion:
\[
u_t +f(u)_x =\mu m (u^{m-1} u_{x})_x.
\]
The solution for this type of equation usually forms sharp corners and losses its regularity \cite{GK}. When $m<1$, eq. \eqref{system} reduces to the \emph{fast diffusion} equation:
\[
u_t +f(u)_x =\mu m (\frac{u_{x}}{u^{1-m}})_x.
\]
Here, the case of $m=0$ corresponds to the critical fast diffusion:
\begin{equation}\label{critical-fast-diffusion}
u_t +f(u)_x =\mu  (\frac{u_{x}}{u})_x = \mu (\ln u)_{xx}, \ \ \ \mbox{ for } u\ge 0,
\end{equation}
and finally, the case for which $(\frac{u_{x}}{u^{1-m}})_x$ with $m<0$ is called the \emph{super-fast diffusion}.

The focus of the present paper is to investigate the nonlinear stability of viscous shock waves $U(x-st)$ with two constant states $u_->u_+=0$, where $s$ is the speed of shock waves, and $u_+=0$ is the singular state, because the fast diffusion $(\frac{u_x}{u^{1-m}})_x$ with $0<m<1$ will be $\infty$ at $u_+=0$.

{\bf Background of study}. Regarding the stability of viscous shock waves, it has been one of hot research spots in fluid dynamical PDEs, and has been extensively studied.

For Burgers' equation with regular diffusion of  $m=1$, Ili'in and Oleinik \cite{Ili'in}  first studied early in 1960 the stability of shock profiles by the maximum principle, then in 1976 Sattinger \cite{spectural} obtained stability via spectral analysis of the differential operator. Later in 1985, Kawashima and Matsumura \cite{same line} and Nishihara \cite{Nishihara} proved the stability using standard $L^2$-energy method when the flux function $f(u)$ is convex. When the flux function is nonconvex, Kawashima and Matsumura \cite{nonconvex}, Mei \cite{M. Mei}, Matsumura and Nishihara \cite{conventional energy method} further showed stability of shock waves using the $L^2$-weighted energy method. Later,  Freist\"uhler and Serre \cite{Serre} presented the $L^1$-stability by the semigroup method, and Howard \cite{Howard-3, Howard-2,Howard-1} showed  stability via the Evan function method. Very recently, Kang and Vasseur \cite{KV} proved the same stability by the relative entropy method. For other significant contributions, we refer the reader to  Weinberger \cite{nonconvex3}, Jones \textit{et al.} \cite{Jones}, Engler \cite{Engler}, Fries \cite{Fries}, as well as for the system case by Matsumura-Nishihara \cite{Matsumura-Nishihara-1}, Goodman \cite{Goodman},  Liu \cite{Liu}, Liu-Zeng \cite{Liu-Zeng-1, Liu-Zeng-2}, Macia-Zumbrun \cite{MZ}, Szepessy-Xin \cite{Szepessy}, Kang-Wang-Vassour \cite{KVW}. Additionally, the case with boundary effect was investigated by Liu and collaborators \cite{IBVP2, IBVP3, IBVP1} and Matsumura-Mei \cite{Matsumura-Mei}.

Regarding the slow (degenerate) diffusion case with $m>1$ and the fast diffusion case with $m<1$ for Burgers' equation, the stability of shock waves is quite challenging and almost never related. In fact, in the slow diffusion case with $m>1$, the viscous shock waves become sharp, non-differentiable, and lose their regularity, which pauses an essential difficulty for the stability proof. As is well known, there are only four works related to the stability of sharp traveling waves, two are by Bir$\acute{\mbox{o}}$ \cite{Biro} and  Kamin-Rosenau \cite{K-R-1} for Fisher-KPP equations, and the other two  by Kamin-Rosenau \cite{K-R-2} and Xu-Ji-Mei-Yin \cite{XJMY} for Burgers-Fisher-KPP equations.  All of them are based on the monotonic technique by constructing a pair of upper and lower solutions in very special forms. The effect of reaction terms for these Fisher-KPP equations plays a key role for the proof of stability. However, for Burgers' equation \eqref{system}, there is no such reaction term, and the special monotonic technique cannot be applied to Burgers' equations with degenerate diffusion. The stability of sharp viscous shock waves is still an open problem, as far as the authors know.

For the fast diffusion case that is with $m<1$, the main difficulty is the strong singularity coming from the fast diffusion of $(\frac{u_x}{u^{1-m}})_x$ at the singular state $u_+=0$.
Recently, for the critical fast diffusion case \eqref{critical-fast-diffusion} with $m=0$, joint with X. Li and J. Li, the second and the third authors \cite{Li-Li-Mei-Nave} first proved the stability of shock waves with the singularity at $u_+=0$ by the weighted energy method, where in order to overcome the singularity of the diffusion at the constant state $u_+=0$, the weight function related to the shock wave was carefully constructed.

{\bf Difficulties and strategies}. The present article is a follow-up of \cite{Li-Li-Mei-Nave} for the fast diffusion case with $0<m<1$.  In order to tackle this problem, we first need to analyze the property of viscous shock waves $U(x-st)$ ($s$ is the wave speed) with singular point $u_+=0$ based on the value of $0<m<1$, then we use the weighted-energy method to show the stability of these shock waves. We observe that, there exist two different types of viscous shock waves based on different setting of flux function $f(u)$ and the states $u_\pm$. One is the regular (non-degenerate) viscous shock satisfying Lax's entropy condition, with fast algebraic decay to the singular state $u_+=0$. This causes a much strong singularity to the system in the form of $m(\frac{u_x}{u^{1-m}})_x$. The other is the degenerate shock with slow algebraic decay to $u_+=0$, which makes a less strong singularity to the equation.  In order to overcome the singularity at $u_+=0$, we  adopt the technical weighted-energy method to treat the asymptotic stability. Since the singularities for the solution itself and its derivatives are totally different, the weight functions for treating these singularities are also different, and should be  carefully chosen based on the different index $m$ for the fast diffusion for both non-degenerate and degenerate cases of shock waves. In particular, with the new observation mentioned before,  the weights for the non-degenerate case need to be  much stronger than the degenerate case. On the other hand, different from the study \cite{Li-Li-Mei-Nave},  we realize that the effect of the flux function plays a crucial role in the stability, see Theorem \ref{Mthm} in Section 2 and the typical examples in  Section 6, and also that the initial data are restricted to the index $m$. To treat the singularities at the far fields when we take the weighted energy estimates, we artfully apply the cut-off technique. On the other hand, we carry out some numerical simulations for these examples at the end of the paper. Our numerical experiments confirm and illustrate our theoretical results. In particular, we observe from the numerical experiment that the shape of solutions to \eqref{system} and \eqref{ini} steepen when $m$ gets close to $0^+$. This indicates that the effect of the singular fast diffusion to the solution is essential.

This paper is organized as follows. In Section 2, we present the main results of this paper, including the existence and the nonlinear stability of the shock profile. In Section 3, we present some preliminary calculations and derive the perturbation equation. Section 4 is devoted to the proof of existence. Section 5 contains the proof for the nonlinear stability of viscous shock waves in the non-degenerate case. Section 6 contains the proof for the nonlinear stability of viscous shock waves in the degenerate case. Finally, in Section 7, we provide four examples and carry out some numerical simulations to illustrate our results.

We finish by introducing some notation.

\noindent
{\bf Notation.}~~In this paper, we denote  the generic positive constants which are independent of the time $t$  by $C$ unless otherwise stated. We denote $f(x)\sim g(x)$ as $x\rightarrow x_0$ when $C^{-1}g\le f \le Cg$ in a neighborhood of $x_0$. For function spaces,
  $L^p$ denotes the space of $p$-th integrable functions on $R$ with the usual norm $\|\cdot\|_{L^p}$. For simplicity, $\|\cdot\|_{L^2}=\|\cdot\|$. $H^l(l\geq 0)$ denotes the usual $l^{th}$ order Sobolev space with the standard norm $\|\cdot\|_l$. For the weight function $w>0$, $L_w^2$ denotes the space of measurable functions $f$ satisfying $\sqrt{w}f\in L^2$ with the norm $\|\cdot\|_w$, meanwhile, $H_w^l$ denotes weighted Sobolev space of $f$ satisfying $\sqrt{w}\partial_x^j f\in L^2$ for $0\leq j\leq l$ with the norm $\|\cdot\|_{l,\omega}$.
Denote
\begin{equation}
\langle x\rangle =\sqrt{1+x^2}, \mbox{ and }
  \langle x\rangle_+=\left\{
  \begin{array}{ll}
    \sqrt{1+x^2},~&\mbox{for}~x\geq0,\\
    1,&\mbox{for}~x<0.
  \end{array}
  \right.
\end{equation}
 The weighted space $L^2_w$ for such weight function $w=\langle x\rangle_+^\alpha$ is denoted as $L^2_{\langle x\rangle_+^\alpha}$, and the corresponding norm is $\|\cdot\|_{\langle x\rangle_+^\alpha}$. The weighted Sobolev space $H_{\langle x\rangle_+^\alpha}^l$ are defined similarly.

\section{Main results}

A viscous shock wave to  the system \eqref{system} is the special solution in the form of
\begin{equation}\label{sp}
u(x,t)=U(x-st)=: U(\xi),\;\;U(\xi)\rightarrow u_\pm\; \mbox{ as } \;\xi\rightarrow \pm\infty,
\end{equation}
where $0=u_+<u_-$ and $s$ is the speed of shock profiles.  Namely, it satisfies
   \begin{equation}\label{xi}
    \left\{
    \begin{array}{ll}
    -sU_\xi+f(U)_\xi=\mu(U^m)_{\xi\xi},\\
    U(\pm\infty)=u_\pm,
    \end{array}\right.
  \end{equation}
  where $\partial_\xi=\frac{\rm d}{{\rm d}\xi}$.
 Integrating the first equation of \eqref{xi} in $\xi$ over
$(\pm\infty,\xi)$, we have
\begin{equation}\label{Hu}
\dfrac{\mu m}{U^{1-m}}U_\xi
=-s(U-u_\pm)+f(U)-f(u_\pm)=: g(U).
\end{equation}
Denote
\begin{equation}
 U_\xi
=\dfrac{U^{1-m}}{\mu m}[-s(U-u_\pm)+f(U)-f(u_\pm)]=:h(U),
\end{equation}
 provided that, by passing $\xi\to\pm\infty$ and formally confirming $U_\xi(\pm\infty)=0$, then the state constants $u_\pm$ and $s$ satisfy the Rankine-Hugoniot condition
 \begin{equation}\label{RH}
s=\df{f(u_+)-f(u_-)}{u_+-u_-}.
\end{equation}
When we look for the monotonic viscous shock $u_+=0<U(x-st)<u_-$, namely, $U_\xi<0$, we then need the following generalized  entropy condition
  \begin{equation}\label{Sc}
    g(U)<0,~\mbox{for}~U\in(u_+,u_-),
  \end{equation}
 equivalently,
 \[
 -s(U-u_\pm)+f(U)-f(u_\pm)<0, ~\mbox{for}~U\in(u_+,u_-).
 \]
This implies that
  \begin{equation}
     f'(u_+)\leq s \leq f'(u_-),
  \end{equation}
  where $~'=\frac{\rm d}{{\rm d}u}$.
When $s=f'(u_+)$ or $s=f'(u_-)$, we call the viscous shock wave $U(x-st)$ to be degenerate. The viscous shock wave $U(x-st)$ is said to be regular or non-degenerate, if Lax's entropy condition holds
\begin{equation}\label{lax}
 f'(u_+)<s<f'(u_-).
\end{equation}
In the degenerate case, we mainly consider the significant case  of $s=f'(u_+)$, because $u_+=0$ is the singular state for the equation. We assume that
  \begin{equation}\label{1-11-1}
   f''(u_+)=\cdots =f^{(k_+)}(u_+)=0~\mbox{and}~f^{(k_++1)}(u_+)\neq 0,~\mbox{ for some integer }~k_+\geq 1.
  \end{equation}
%  and in the case of $s=f'(u_-)$, we suppose that
% \begin{equation}\label{1-11-2}
%f''(u_-)=\cdots =f^{(k_-)}(u_-)=0~\mbox{and}~f^{(k_-+1)}(u_-)\neq 0,~\mbox{for}~k_-\geq 1.
%  \end{equation}

Now we state the existence of shock profiles and their stability.

\begin{thm}[Existence of shock profiles]\label{pro}
Assume  $u_->u_+=0$.
\begin{itemize}[leftmargin=*]
 \item [$\bullet$] Necessary condition: if \eqref{system} admits a monotonically decreasing shock profile $U(x-st)$ connecting  $u_-$ and $u_+$, then $u_\pm$ and $s$ must satisfy the Rankine-Hugoniot condition \eqref{RH} and the generalized shock condition \eqref{Sc}.
  \item [$\bullet$] Sufficient condition: suppose that \eqref{RH} and \eqref{Sc} hold, then there exists a monotonically decreasing solution  $U(\xi)$ of \eqref{system} with $U(\pm\infty)=u_\pm$, which  is  unique up to a shift in $\xi$.
  \item [$\bullet$] Decay properties: for the non-degenerate case  $f'(u_+)<s<f'(u_-)$, it holds that
 \begin{equation}\label{210}
 \begin{array}{lll}
    |U_\xi|^{\frac{1}{2-m}} \sim |U(\xi)-u_+|\sim |\xi|^{-\frac{1}{1-m}},~\mbox{as}~\xi\rightarrow +\infty,\\[4pt]
    |U_\xi|\sim |U(\xi)-u_-|\sim \exp ({-\lambda|\xi|}),~\mbox{as}~\xi\rightarrow -\infty,
  \end{array}
 \end{equation}
where $\lambda:=\dfrac{u_-^{1-m}}{\mu m}\left( f'(u_-)-s \right)>0 $.

For the degenerate cases: $f'(u_+)=s<f'(u_-)$ satisfying \eqref{1-11-1},   it holds that
\begin{equation}\label{211}
  \begin{array}{ll}
    |U_\xi|^{\frac{1}{k_++2-m}} \sim |U(\xi)-u_+|\sim |\xi|^{-\frac{1}{k_++1-m}},~\mbox{for}~s=f'(u_+),~\mbox{as}~\xi\rightarrow +\infty,\\[4pt]
    |U_\xi|\sim |U(\xi)-u_-|\sim \exp ({-\lambda|\xi|}),~\mbox{as}~\xi\rightarrow -\infty,
  \end{array}
 \end{equation}
where $k_+\geq 1$.

\end{itemize}
\end{thm}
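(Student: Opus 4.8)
The plan is to analyze the scalar first-order ODE $U_\xi = h(U)$ from \eqref{Hu}–\eqref{RH} as an autonomous equation on the interval $(u_+, u_-) = (0, u_-)$, using the generalized entropy condition \eqref{Sc} to guarantee monotonicity, and then to extract the decay rates from the behavior of $h(U)$ near the two equilibria $U = u_+$ and $U = u_-$. For the necessary condition, I would argue as follows: if a monotonically decreasing profile $U(x-st)$ exists, then integrating \eqref{xi} over $(\pm\infty, \xi)$ and passing to the limits forces the Rankine–Hugoniot relation \eqref{RH} (consistency of the two possible integration constants), and then $U_\xi < 0$ together with $U^{1-m} > 0$ and $\mu m > 0$ in \eqref{Hu} forces $g(U) < 0$ on $(u_+, u_-)$, which is \eqref{Sc}. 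The slightly delicate point here is justifying that $U_\xi \to 0$ at $\pm\infty$; this follows because $U$ is monotone and bounded, hence has limits, and $U_\xi = h(U)$ is continuous with $h(u_\pm) = 0$ once \eqref{RH} holds.

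For the sufficient condition, assuming \eqref{RH} and \eqref{Sc}, I note that $h(U) = \frac{U^{1-m}}{\mu m} g(U)$ is continuous on $[0, u_-]$, vanishes exactly at $U = 0$ and $U = u_-$ (using $g(u_\pm) = 0$), and is strictly negative on the open interval. Since $0 < m < 1$, the exponent $1-m \in (0,1)$, so $h$ is continuous up to $U = 0$ but not Lipschitz there; uniqueness of the profile (up to shift) must therefore be obtained not from Picard–Lindelöf at the endpoint but from the standard fact that on the open interval $(0, u_-)$ the equation is autonomous with $h < 0$, so $\xi \mapsto U(\xi)$ is obtained by inverting the strictly decreasing function $\xi = -\int^{U} \frac{ds}{|h(s)|}$, and the endpoint integrals $\int_{0^+} \frac{ds}{|h(s)|}$ and $\int^{u_-^-} \frac{ds}{|h(s)|}$ both diverge (so the profile is a genuine heteroclinic reaching $u_\pm$ only as $\xi \to \pm\infty$); divergence at $u_-$ comes from the simple zero $g'(u_-) \neq 0$ under Lax or degeneracy, and divergence at $0$ from $h(s) \sim c\, s^{1-m}$ (non-degenerate) or $h(s) \sim c\, s^{k_+ + 1 - m}$ (degenerate), whose reciprocals have non-integrable singularities since $1 - m < 1$ and $k_+ + 1 - m \ge 1$. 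Any two solutions then differ by a shift because the antiderivative is unique up to the constant of integration.

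For the decay properties I would work near each equilibrium separately. Near $\xi \to -\infty$, $U \to u_-$ and linearizing $U_\xi = h(U)$ gives $U_\xi \approx h'(u_-)(U - u_-)$ with $h'(u_-) = \frac{u_-^{1-m}}{\mu m}\big(f'(u_-) - s\big) = \lambda > 0$; since $U - u_- < 0$ this yields exponential approach $|U - u_-| \sim e^{-\lambda|\xi|}$, and then $|U_\xi| = |h(U)| \sim \lambda |U - u_-| \sim e^{-\lambda |\xi|}$, giving \eqref{210}$_2$ and \eqref{211}$_2$. Near $\xi \to +\infty$, $U \to 0^+$ and the equation degenerates: writing $g(U) = -s U + f(U) - f(0)$, in the non-degenerate case $g'(0) = f'(0) - s = f'(u_+) - s < 0$, so $g(U) \sim (f'(u_+) - s) U$ and hence $h(U) \sim \frac{f'(u_+)-s}{\mu m}\, U^{2-m}$; solving $U_\xi \sim -c\, U^{2-m}$ with $c > 0$ gives the algebraic decay $U \sim (c(1-m))^{-\frac{1}{1-m}} \xi^{-\frac{1}{1-m}}$, whence $|U_\xi| \sim U^{2-m} \sim \xi^{-\frac{2-m}{1-m}}$, i.e. $|U_\xi|^{1/(2-m)} \sim |U| \sim |\xi|^{-1/(1-m)}$, which is \eqref{210}$_1$. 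In the degenerate case $s = f'(u_+)$ with \eqref{1-11-1}, Taylor expansion gives $g(U) \sim \frac{f^{(k_++1)}(u_+)}{(k_++1)!} U^{k_++1}$, so $h(U) \sim c\, U^{k_+ + 2 - m}$ and the same ODE computation with exponent $k_+ + 2 - m$ in place of $2 - m$ yields $|U_\xi|^{1/(k_++2-m)} \sim |U| \sim |\xi|^{-1/(k_++1-m)}$, i.e. \eqref{211}$_1$; one should also check the sign of $f^{(k_++1)}(u_+)$ is consistent with \eqref{Sc} so that $h < 0$, which it is since \eqref{Sc} forces $g < 0$ just above $u_+ = 0$.

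The main obstacle I anticipate is the rigorous passage from the heuristic linearization/leading-order balance to the genuine asymptotic equivalence $f \sim g$ at $\xi \to \pm\infty$, in particular near the singular state $u_+ = 0$ where $h$ is only Hölder continuous and the ODE is non-hyperbolic (the linearization at $U = 0$ is degenerate, $h'(0) = 0$). The clean way to handle this is a change of variables: from $U_\xi = h(U) \sim -c U^{p}$ with $p = 2 - m$ (or $k_+ + 2 - m$) one sets $V = U^{1-p}$ (resp. chooses the appropriate power), turning the equation into $V_\xi \to (p-1)c > 0$, so $V \sim (p-1)c\,\xi$ and the sharp rate follows, with the error controlled by $\int^\infty |h(U)/U^p - (-c)|\,d\xi < \infty$ using the remaining smoothness of $f$. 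This is the same mechanism used in \cite{Li-Li-Mei-Nave} for $m = 0$, adapted to the extra factor $U^{1-m}$.
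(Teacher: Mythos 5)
Your proposal follows essentially the same route as the paper: obtain \eqref{RH} by integrating \eqref{xi} over the line, read off \eqref{Sc} from the sign of $g$ in \eqref{Hu}, construct the profile by quadrature of the autonomous scalar ODE $U_\xi=h(U)$ (the paper inverts $H(U)=\int_{u_*}^U h(\tau)^{-1}\,{\rm d}\tau$), and extract the rates from the leading-order behavior of $h$ near the two equilibria, exactly as in the paper's computation $U_\xi\sim\frac{f'(u_+)-s}{\mu m}U^{2-m}$, $U_\xi\sim\lambda(U-u_-)$, and $U_\xi\sim\frac{f^{(k_++1)}(u_+)}{\mu m}U^{k_++2-m}$. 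In fact you are more careful than the paper on two points it leaves implicit: justifying $U_\xi(\pm\infty)=0$ from monotonicity and boundedness, and verifying that the quadrature integrals diverge at the endpoints so that the heteroclinic reaches $u_\pm$ only as $\xi\to\pm\infty$; your change of variables $V=U^{1-p}$ to make the algebraic rate rigorous at the non-hyperbolic equilibrium $U=0$ is also a sound way to fill in what the paper calls a ``straightforward computation.''

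One concrete slip needs fixing in the sufficiency paragraph: you claim divergence of $\int_{0^+}\frac{{\rm d}s}{|h(s)|}$ from $h(s)\sim c\,s^{1-m}$ (non-degenerate), resp.\ $h(s)\sim c\,s^{k_++1-m}$ (degenerate), ``since $1-m<1$.'' Both the exponents and the criterion are wrong as written: since $h(U)=\frac{U^{1-m}}{\mu m}g(U)$ and $g(U)\sim(f'(u_+)-s)U$ (resp.\ $g(U)\sim cU^{k_++1}$) near $U=0$, the correct asymptotics are $h(s)\sim c\,s^{2-m}$, resp.\ $c\,s^{k_++2-m}$, and non-integrability of $1/|h|$ at $0$ requires the exponent to be at least $1$, which holds because $2-m>1$ and $k_++2-m>1$. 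With the exponent $1-m<1$ you actually wrote, $1/|h|$ would be integrable at $0$ and the profile would touch $u_+=0$ at a finite $\xi$ (a sharp front), contradicting your conclusion. Since you use the correct exponents $2-m$ and $k_++2-m$ in the decay part, this is an internal inconsistency rather than a failure of the method, but it is the one written step that does not stand as stated.
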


 For a given shock wave $U(x-st)$, as showed in \cite{same line,nonconvex}, we can determine our targeted shock profile $U(x-st+x_0)$ related to the given initial data by
 \begin{equation}
   x_0:=\df{1}{u_+-u_-}\int^{+\infty}_{-\infty} [u_0(x)-U(x)]{\rm d} x,
 \end{equation}
 such that the so-called ``zero-mass'' condition holds
 \begin{equation}\label{shift}
   \int^{+\infty}_{-\infty}[u_0(x)-U(x+x_0) ]{\rm d}x=0.
 \end{equation}
 Without loss of generality, let us take $x_0=0$. Define
 \begin{equation}
  \phi_0(\xi)=\int_{-\infty}^\xi[u_0(x)-U(x)]{\rm d}x.
\end{equation}

With the existence of shock profiles,  the stability of shock waves is stated as follows.

\begin{thm}[Stability of non-degenerate shock profiles with $f'(u_+)<s<f'(u_-)$, much strong singularity case]\label{Mthm}
Let
\begin{equation}
   K(u):=\df{g(u)}{u^{2m}}
 \end{equation}
be convex on $[u_+,u_-]$, namely,
\begin{equation}\label{M}
  K''(u)\geq 0,~\mbox{for}~u\in[u_+,u_-].
\end{equation}
Let $U(\xi)$ be a viscous shock wave satisfying \eqref{RH}, the Lax's entropy condition \eqref{lax} and \eqref{shift}. $f(u)$ is a smooth function. Let the initial perturbation be $\phi_0\in L^2_{\langle \xi\rangle_+^{\alpha_2}}$ and $\phi_{0\xi}\in H_{\langle \xi\rangle_+^{\alpha_1}}^1$, then
there exists a positive constant $\delta_0$ such that if
\begin{equation}\label{N0}
\left\| {\phi_{0\xi\xi}}\right\|_{\langle \xi\rangle_+^{\alpha_1}}^2+\left\| {\phi_{0\xi}}\right\|_{\langle \xi\rangle_+^{\alpha_1}}^2+\left\| {\phi_0}\right\|_{\langle \xi\rangle_+^{\alpha_2}}^2\leq \delta_0,
\end{equation}
then for the Cauchy problem \eqref{system} and
\eqref{ini} there exists a unique global solution $u(x,t)$ satisfying
\begin{equation}
\begin{array}{ll}
  &u-U\in C^0([0,+\infty);H^1_{\langle \xi\rangle_+^{\alpha_1}})\cap L^2([0,+\infty);L^2_{\langle \xi\rangle_+^{\alpha_4}}),\\[6pt]
  &(u-U)_x\in  L^2([0,+\infty);H^1_{\langle \xi\rangle_+^{\alpha_3}})
\end{array}
\end{equation}
and the asymptotic stability
\begin{equation}
  \sup_{x\in R}|u(t,x)-U(x-st)|\rightarrow0, \;\;as~t\rightarrow +\infty,
\end{equation}
where $\alpha_1=\frac{2}{1-m}$ and $\alpha_2=\frac{2m}{1-m}$, $\alpha_3=\frac{3-m}{1-m}$ and $\alpha_4=\frac{1+m}{1-m}$.
\end{thm}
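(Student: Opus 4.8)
The plan is to use the classical antiderivative trick together with a weighted energy method whose weights are tuned to the fast-diffusion singularity at $u_+=0$.

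\textbf{Reformulation.} Passing to the moving frame $\xi=x-st$, I would write $u(t,x)=U(\xi)+\phi_\xi(t,\xi)$ with $\phi(t,\xi)=\int_{-\infty}^\xi\big(u(t,\eta)-U(\eta)\big)\,{\rm d}\eta$; this is legitimate and gives $\phi(t,\pm\infty)=0$ by the normalization \eqref{shift}. Subtracting \eqref{xi} from \eqref{system} and integrating once in $\xi$ (the endpoint terms at $-\infty$ vanish since $u\equiv U\equiv u_-$ there) yields the scalar perturbation equation
\[
\phi_t+a(\xi)\,\phi_\xi-\mu m\,\big(U^{m-1}\phi_\xi\big)_\xi=F[\phi],\qquad a(\xi):=f'(U(\xi))-s,
\]
with $F[\phi]=-\big(f(U+\phi_\xi)-f(U)-f'(U)\phi_\xi\big)+\mu\,\partial_\xi R$ and $R:=(U+\phi_\xi)^m-U^m-mU^{m-1}\phi_\xi$. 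By the decay \eqref{210}, $U\sim\langle\xi\rangle^{-1/(1-m)}$ and $U_\xi\sim-U^{2-m}$ as $\xi\to+\infty$, so the linearized diffusion coefficient $\mu m U^{m-1}\sim\langle\xi\rangle$ is \emph{unbounded}, while $a(+\infty)<0<a(-\infty)$ by \eqref{lax}; near $-\infty$ everything decays exponentially and is harmless.

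\textbf{The basic weighted estimate and the role of \eqref{M}.} I would use the weight $w_2:=U^{-2m}$, a smooth positive function on $\mathbb R$ comparable to $\langle\xi\rangle_+^{\alpha_2}$ and blowing up as $\xi\to+\infty$. Multiplying the $\phi$-equation by $w_2\phi$, integrating over a truncated interval $(-R,R)$, and integrating by parts twice in the convection and linearized-diffusion terms, the zeroth-order terms combine, via the pointwise identity
\[
\big(w_2 a\big)_\xi+\big(\mu m\,w_2 U^{m-1}\big)_\xi=U_\xi\,K''(U),
\]
into the term $\tfrac12\int(-U_\xi)K''(U)\,\phi^2$; here one checks the identity from \eqref{Hu} (i.e. $\mu m U^{m-1}U_\xi=g(U)$) and the formula $K''(u)=u^{-2m-2}\big(u^2g''-4mug'+2m(2m+1)g\big)$. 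Since $U_\xi<0$ and $K''\ge0$ by \eqref{M} this term is nonnegative, so one obtains
\[
\frac{{\rm d}}{{\rm d}t}\,\frac12\!\int w_2\phi^2+\mu m\!\int w_2 U^{m-1}\phi_\xi^2+\frac12\!\int(-U_\xi)K''(U)\,\phi^2=\int w_2 F[\phi]\,\phi+(\text{boundary terms}),
\]
which shows that the convexity of $K$ is precisely what makes the basic estimate close. The dissipation $\mu m\int w_2 U^{m-1}\phi_\xi^2$ is comparable to $\|\phi_\xi\|_{\langle\xi\rangle_+^{\alpha_4}}^2$ (note $\alpha_4=\alpha_2+1$), and the boundary terms at $\pm R$ are removed by letting $R\to\infty$ along a sequence on which they vanish, which the convergence of the space integrals guarantees; this is the cut-off device, needed because a priori $\phi$ is only known to lie in weighted energy spaces.

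\textbf{Higher-order estimates, closure, and asymptotics.} Differentiating the equation once and twice in $\xi$ and repeating the procedure with the stronger weight $w_1\sim\langle\xi\rangle_+^{\alpha_1}$ gives control of $\|\phi_\xi(t)\|_{\langle\xi\rangle_+^{\alpha_1}}$, $\|\phi_{\xi\xi}(t)\|_{\langle\xi\rangle_+^{\alpha_1}}$ and of the dissipations $\|\phi_{\xi\xi}\|_{\langle\xi\rangle_+^{\alpha_3}}$, $\|\phi_{\xi\xi\xi}\|_{\langle\xi\rangle_+^{\alpha_3}}$ ($\alpha_3=\alpha_1+1$). Every nonlinear error becomes cubic once one factors out the $L^\infty$-norm of a quotient of $\phi$ or $\phi_\xi$ by a power of $U$ — for instance $|R|\le C\,U^{m-2}\phi_\xi^2$ on $\{|\phi_\xi|\le U/2\}$ and $\|\phi_\xi/U\|_{L^\infty}\lesssim\|\phi_\xi/U\|_{H^1}\lesssim N(T)^{1/2}$, where $N(T):=\sup_{0\le t\le T}\big(\|\phi_{\xi\xi}(t)\|_{\langle\xi\rangle_+^{\alpha_1}}^2+\|\phi_\xi(t)\|_{\langle\xi\rangle_+^{\alpha_1}}^2+\|\phi(t)\|_{\langle\xi\rangle_+^{\alpha_2}}^2\big)$ — so these errors are absorbed into the weighted dissipation once $\delta_0$ is small. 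Adding the three levels yields a closed a priori inequality forcing $N(T)\le CN(0)$ uniformly in $T$; combined with a local-in-time existence result — obtained by cutting off the singular coefficient $U^{m-1}$, solving the regularized problem, and passing to the limit with these uniform bounds — this closes a standard continuation argument and produces the global solution in the stated spaces. Finally, $\int_0^\infty\|\phi_\xi(t)\|_{\langle\xi\rangle_+^{\alpha_4}}^2\,{\rm d}t<\infty$ together with the uniform bound on $\tfrac{{\rm d}}{{\rm d}t}\|\phi_\xi(t)\|^2$ forces $\|\phi_\xi(t)\|\to0$, whence $\sup_x|u(t,x)-U(x-st)|=\|\phi_\xi(t)\|_{L^\infty}\le C\|\phi_\xi(t)\|^{1/2}\|\phi_{\xi\xi}(t)\|^{1/2}\to0$.

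\textbf{Main obstacle.} The hard part is the basic estimate: one must identify the exact weight ($\sim U^{-2m}$) for which the convection and the unboundedly growing linearized diffusion combine, through the identity above, into a sign-definite lower-order term precisely under hypothesis \eqref{M}, and at the same time control all far-field boundary terms via the cut-off. A secondary difficulty is keeping the hierarchy of weights $\alpha_1,\dots,\alpha_4$ mutually compatible across the three levels so that every nonlinear remainder stays absorbable — in particular bounding $R$ on the bad set $\{|\phi_\xi|>U/2\}$, where the Taylor estimate is unavailable.
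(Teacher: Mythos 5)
Your proposal follows essentially the same route as the paper: the antiderivative reformulation, a basic estimate with the weight $U^{-2m}\sim\langle\xi\rangle_+^{\alpha_2}$ whose sign-definite zeroth-order term is precisely $\tfrac12\int(-U_\xi)K''(U)\phi^2$ under \eqref{M}, a cut-off function to remove the far-field boundary terms, higher-order estimates with the weight $U^{-2}\sim\langle\xi\rangle_+^{\alpha_1}$ (the paper additionally regularizes by $U_\varepsilon=U+\varepsilon$ and uses Fatou's lemma), and the same $\|\phi_\xi\|\,\|\phi_{\xi\xi}\|$ Sobolev argument for the time-asymptotics, so the proposal is correct in substance. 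One small correction: your displayed pointwise identity should read $\big(w_2\,a\big)_\xi+\big(\mu m\,U^{m-1}\,\partial_\xi w_2\big)_\xi=U_\xi\,K''(U)$ (the derivative of the weight, not of the product $w_2U^{m-1}$), which is what the integration by parts combined with \eqref{Hu} actually yields; with that fix your basic estimate is exactly the paper's Lemma \ref{l1}.
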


\begin{thm}[Stability of degenerate shock profiles with $f'(u_+)=s<f'(u_-)$, less strong singularity case]\label{ext}%[Nonlinear stability of degenerate shock profiles]
%Assume $m\in\left(\frac{1}{2},1\right)$.
 Let $U(\xi)$ be a viscous shock wave satisfying \eqref{RH}, the degenerate entropy condition $s=f'(u_+)$ and \eqref{shift}, and $f(u)$ be  smooth. When $\phi_0\in L^2_{\langle \xi\rangle_+^{\beta_2}}$ and $\phi_{0\xi}\in H_{\langle \xi\rangle_+^{\beta_1}}^1$, then there exist a positive constant $\delta_0$ such that if
\begin{equation}\label{N0}
\left\| {\phi_{0\xi\xi}}\right\|_{\langle \xi\rangle_+^{\beta_1}}^2+\left\| {\phi_{0\xi}}\right\|_{\langle \xi\rangle_+^{\beta_1}}^2+\left\| {\phi_0}\right\|_{\langle \xi\rangle_+^{\beta_2}}^2\leq \delta_0,
\end{equation}
then for the Cauchy problem \eqref{system} and
\eqref{ini} there exists a unique global solution $u(x,t)$ satisfying
\begin{equation}
\begin{array}{ll}
  &u-U\in C^0([0,+\infty);H^1_{\langle \xi\rangle_+^{\beta_1}})\cap L^2([0,+\infty);L^2_{\langle \xi\rangle_+}),\\[6pt]
  &(u-U)_x\in  L^2([0,+\infty);H^1_{\langle \xi\rangle_+^{\beta_3}})
\end{array}
\end{equation}
and the asymptotic stability
\begin{equation}
  \sup_{x\in R}|u(t,x)-U(x-st)|\rightarrow0, \;\;as~t\rightarrow +\infty,
\end{equation}
where $\beta_1=\frac{2}{k_++1-m}$, $\beta_2=\frac{k_+}{k_++1-m}$ and $\beta_3=\frac{3-m}{k_++1-m}$.
\end{thm}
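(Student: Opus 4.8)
The plan is to run a weighted energy argument in the travelling variable $\xi=x-st$, paralleling the non-degenerate case but with the weaker weights $\langle\xi\rangle_+^{\beta_i}$ forced by the slow decay \eqref{211}. First I would put $u(t,x)=U(\xi)+v(t,\xi)$ and, using the normalization \eqref{shift}, pass to the antiderivative $\phi(t,\xi)=\int_{-\infty}^{\xi}v(t,y)\,{\rm d}y$, so that $v=\phi_\xi$ and $\phi(t,\pm\infty)=0$; substituting into \eqref{system} and integrating once in $\xi$ (as prepared in Section~3) gives a scalar equation of the shape
\[
\phi_t+(f'(U)-s)\phi_\xi-\mu\big(mU^{m-1}\phi_\xi\big)_\xi=-R_f+\mu\,\partial_\xi R_d,
\]
where $R_f=f(U+\phi_\xi)-f(U)-f'(U)\phi_\xi=O(\phi_\xi^2)$ and $R_d=(U+\phi_\xi)^m-U^m-mU^{m-1}\phi_\xi$, which is $O(U^{m-2}\phi_\xi^2)$ once the bound $|\phi_\xi|\le\tfrac12 U$ is available. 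The two coefficients behave oppositely as $\xi\to+\infty$: the diffusion coefficient $U^{m-1}\sim\langle\xi\rangle_+^{(1-m)/(k_++1-m)}$ blows up, whereas $|f'(U)-s|=|f'(U)-f'(u_+)|\sim\langle\xi\rangle_+^{-\beta_2}$ decays (here \eqref{1-11-1} together with the entropy sign \eqref{Sc} forces $f^{(k_++1)}(u_+)<0$). At $\xi\to-\infty$ everything is exponentially localized and the analysis is classical. Local existence with $\phi_\xi\in H^1_{\langle\xi\rangle_+^{\beta_1}}$, $\phi\in L^2_{\langle\xi\rangle_+^{\beta_2}}$ and $U+\phi_\xi>0$ is standard by iteration, so by the continuation principle it suffices to establish the a~priori bound
\[
N(t)^2:=\sup_{0\le\tau\le t}\Big(\|\phi\|^2_{\langle\xi\rangle_+^{\beta_2}}+\|\phi_\xi\|^2_{1,\langle\xi\rangle_+^{\beta_1}}\Big)+\int_0^t\Big(\|\phi_\xi\|^2_{\langle\xi\rangle_+}+\|\phi_{\xi\xi}\|^2_{1,\langle\xi\rangle_+^{\beta_3}}\Big)\,{\rm d}\tau\le C\delta_0+CN(t)^3,
\]
which, for $\delta_0$ small (this is the smallness assumption of the theorem), closes and yields precisely the solution classes in the statement.

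The core is three weighted energy identities, at the zeroth, first and second differential order. For the zeroth order one multiplies the $\phi$-equation by $\langle\xi\rangle_+^{\beta_2}\phi$ and integrates over $\mathbb{R}$: the linear diffusion produces the dissipation $\mu m\int U^{m-1}\langle\xi\rangle_+^{\beta_2}\phi_\xi^2$, and since $\tfrac{1-m}{k_++1-m}+\beta_2=1$, this is, up to equivalence of weights, $\int\langle\xi\rangle_+\phi_\xi^2$, i.e.\ the class $L^2_{\langle\xi\rangle_+}$ claimed for $u-U$. For the first and second orders one differentiates the equation once, resp.\ twice, in $\xi$ and tests with $\langle\xi\rangle_+^{\beta_1}\phi_\xi$, resp.\ $\langle\xi\rangle_+^{\beta_1}\phi_{\xi\xi}$; the linear diffusion then yields $\mu m\int U^{m-1}\langle\xi\rangle_+^{\beta_1}\phi_{\xi\xi}^2$ and $\mu m\int U^{m-1}\langle\xi\rangle_+^{\beta_1}\phi_{\xi\xi\xi}^2$, and $\tfrac{1-m}{k_++1-m}+\beta_1=\beta_3$ matches them with the $L^2_{\langle\xi\rangle_+^{\beta_3}}$-dissipations in the statement. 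The convection terms are handled by integration by parts; the weights $\langle\xi\rangle_+^{\beta_i}$ are chosen precisely so that the resulting lower-order quadratic terms are nonnegative or are absorbed into the dissipation integrals, using that $(f'(U)-s)\langle\xi\rangle_+^{\beta_2}$ is bounded with decaying derivative, that the coefficient $U^{m-2}U_\xi$ is bounded (of size $U^{k_+}\sim\langle\xi\rangle_+^{-\beta_2}$), and that $\phi(\pm\infty)=0$ gives Hardy/Poincar\'e-type control of $\int\langle\xi\rangle_+^{-1}\phi^2$-type terms. Since the weights grow at $\xi=+\infty$ while $\phi$ and its derivatives decay only algebraically, each integration by parts is first carried out against a smooth cut-off $\chi_R$ with $\chi_R\equiv 1$ on $\{\xi<R\}$ and ${\rm supp}\,\chi_R\subset\{\xi<2R\}$, and the boundary and commutator contributions are shown to vanish as $R\to\infty$ with the help of the estimates already obtained. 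Adding the three identities with suitably small coefficients produces the inequality for $N(t)$.

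The main obstacle will be the nonlinear fast-diffusion terms near the singular state, namely the contributions of $\mu\,\partial_\xi R_d$ and its $\xi$-derivatives, whose Taylor remainders carry the strongly singular factors $U^{m-2},U^{m-3},\dots$. These are meaningful, and estimable, only once $u=U+\phi_\xi$ is known to remain comparable to $U$ as $\xi\to+\infty$, which follows from a weighted Sobolev embedding: since $U\sim\langle\xi\rangle_+^{-1/(k_++1-m)}=\langle\xi\rangle_+^{-\beta_1/2}$, one has $|\phi_\xi/U|\le C\|\langle\xi\rangle_+^{\beta_1/2}\phi_\xi\|_{L^\infty}\le C\|\phi_\xi\|_{1,\langle\xi\rangle_+^{\beta_1}}\le CN(t)$, so for $N(t)$ small one obtains $|\phi_\xi|\le\eta U$ with $\eta$ small; hence $U+\phi_\xi\sim U>0$ and the ratios $\phi_\xi/U,\ U_\xi\phi_{\xi\xi}/U^2,\dots$ stay bounded and small, which is exactly what makes $R_d$-terms such as $\int U^{m-2}\langle\xi\rangle_+^{\beta_2}\phi_\xi^3$ absorbable into $\int\langle\xi\rangle_+\phi_\xi^2$. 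This is a bootstrap --- the bound $|\phi_\xi|\le\eta U$ is part of $N(t)$ and is propagated by local existence and continuity --- and the remaining labour is systematic power counting: every factor produced when a $\xi$-derivative lands on a weight $\langle\xi\rangle_+^{\beta_i}$, on $U$, or on the singular coefficient $U^{m-1}$ must be dominated by one of the dissipation integrals or carry a smallness factor $N(t)$, which is where the identities $\tfrac{1-m}{k_++1-m}+\beta_2=1$, $\tfrac{1-m}{k_++1-m}+\beta_1=\beta_3$ and the bootstrap bound are used repeatedly. I expect this bookkeeping, together with the cut-off justification of the integrations by parts at $+\infty$, to be the most delicate part; note that, in contrast to the non-degenerate case (Theorem \ref{Mthm}), no convexity hypothesis on $f$ is needed here precisely because the slower decay of $U$ to $0$ makes $U^{m-1}$ --- and hence the weights $\langle\xi\rangle_+^{\beta_i}$ --- milder.

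Finally, the a~priori bound together with continuation gives the global solution in the stated spaces. From $N(\infty)<\infty$ we have $\int_0^{\infty}\|\phi_\xi(\tau)\|^2\,{\rm d}\tau<\infty$, and the equation controls $\|\phi_{\xi t}(\tau)\|$ by the dissipation norms (the factor $U^{m-1}$ in front of $\phi_{\xi\xi\xi}$ being dominated by $\langle\xi\rangle_+^{\beta_3/2}$ since $2(1-m)\le 3-m$), so that $\int_0^{\infty}\big|\tfrac{\rm d}{{\rm d}\tau}\|\phi_\xi(\tau)\|^2\big|\,{\rm d}\tau<\infty$ and hence $\|\phi_\xi(\tau)\|\to 0$ as $\tau\to\infty$; since $\|\phi_{\xi\xi}(\tau)\|$ stays bounded, the interpolation inequality $\sup_\xi|\phi_\xi|^2\le C\|\phi_\xi\|\,\|\phi_{\xi\xi}\|$ yields $\sup_{x\in\mathbb{R}}|u(t,x)-U(x-st)|=\sup_\xi|\phi_\xi(t,\xi)|\to 0$ as $t\to+\infty$, which is the asserted asymptotic stability.
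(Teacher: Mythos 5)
Your overall framework (antiderivative variable, a priori estimate plus continuation, bootstrap bound $|\phi_\xi|\le\eta U$ via weighted Sobolev embedding, and the final decay argument through $\int_0^\infty\bigl|\tfrac{\rm d}{{\rm d}t}\|\phi_\xi\|^2\bigr|\,{\rm d}t<\infty$ and $\sup_\xi|\phi_\xi|^2\le C\|\phi_\xi\|\|\phi_{\xi\xi}\|$) matches the paper's, but there is a genuine gap at the crucial zeroth-order energy estimate. You propose to test the $\phi$-equation directly with $\langle\xi\rangle_+^{\beta_2}\phi$ and assert that the lower-order quadratic terms are ``nonnegative or absorbed into the dissipation.'' With a weight $W=\langle\xi\rangle_+^{\beta_2}$ that is only \emph{comparable} in size to the right object, the integration by parts leaves two uncancelled terms: the cross term $\mu m\int W_\xi\,U^{m-1}\phi\phi_\xi\,{\rm d}\xi$, whose coefficient is exactly $O(1)$ as $\xi\to+\infty$ (since $\beta_2-1+\tfrac{1-m}{k_++1-m}=0$), and the sign-indefinite term $-\tfrac12\int (Wg'(U))_\xi\,\phi^2\,{\rm d}\xi$, whose coefficient is exactly $O(\langle\xi\rangle_+^{-1})$. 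Neither can be closed: the available dissipation is $\int\langle\xi\rangle_+\phi_\xi^2$, and controlling $\int\langle\xi\rangle_+^{-1}\phi^2$ by it is precisely the \emph{borderline} case of the weighted Hardy inequality ($\int x^{\alpha}\phi^2\lesssim\int x^{\alpha+2}\phi_\xi^2$ fails at $\alpha=-1$), while bounding these terms by the energy $\int\langle\xi\rangle_+^{\beta_2}\phi^2$ only gives a Gronwall inequality and hence growth in time, not the uniform bound \eqref{finalde}. There is also no reason for the $\phi^2$ term to have a favorable sign for a generic flux in the degenerate setting.

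What the paper does instead — and what your plan is missing — is to use the profile-adapted Matsumura--Nishihara weight $w(U)=\dfrac{U(U-u_-)}{g(U)}$ and test with $w(U)\phi$ (Lemma \ref{le2}). Because of the profile equation \eqref{Hu}, $\mu m U_\xi U^{m-1}=g(U)$, the cross term $\bigl(\tfrac{\mu m w'(U)U_\xi}{U^{1-m}}-(w'g)(U)\bigr)\phi\phi_\xi$ vanishes \emph{identically}, and since $wg=U(U-u_-)$ one has $(wg)''=2$, so the remaining quadratic term is $+|U_\xi|\phi^2\ge0$; only afterwards are the equivalences $w(U)\sim\langle\xi\rangle_+^{\beta_2}$ and $w(U)U^{m-1}\sim\langle\xi\rangle_+$ used to restate the result in the $\langle\xi\rangle_+$-norms of the theorem. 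This exact algebraic cancellation (not the mere ``mildness'' of the weights, as you suggest at the end of your third paragraph) is why no convexity hypothesis like \eqref{M} is needed in the degenerate case. Your first- and second-order estimates are closer in spirit to the paper's (which tests with $-\bigl(\phi_\xi/U_\varepsilon^2\bigr)_\xi$ and $-\bigl(\phi_{\xi\xi}/U_\varepsilon^2\bigr)_\xi$, with a regularization $U_\varepsilon=U+\varepsilon$ and Fatou, plus a cut-off as you propose), but they consume the zeroth-order bound $\int_0^t\int\phi_\xi^2$, so the gap propagates to the whole argument unless you replace your zeroth-order weight by $w(U)$ or supply an equivalent cancellation mechanism.
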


\begin{remark}

\begin{enumerate}
\item From  Theorem \ref{pro} for the existence of viscous shocks, the asymptotic behaviour of the viscous shock waves at the far fields is related to the index $m$ of singular fast diffusion.   From  Theorem \ref{Mthm} and Theorem \ref{ext} for the stability of viscous shocks, the initial data is expected to have the same decay rates as the given viscous shock wave at the far fields, namely, the corresponding initial data is also related to the index $m$ of singular fast diffusion. As numerically observed in Section 7, we recognize an interesting phenomenon  that the shape of solutions to \eqref{system} and \eqref{ini} become much steeper when $m$ approaches $0^+$. This indicates that the effect of the singular fast diffusion to the solution is essential.

\item
 The singularity of $\frac{1}{u^{1-m}}$ in the case of $f'(u_+)<s<f'(u_-)$ is much stronger than that in the case of $f'(u_+)=s<f'(u_-)$, and the selected weight functions for the case of $f'(u_+)<s<f'(u_-)$ are also much stronger than the case of $f'(u_+)=s<f'(u_-)$. On the other hand, in the non-degenerate case of $f'(u_+)<s<f'(u_-)$, we need the other restriction \eqref{M} for the stability, while in the degenerate case with less strong singularity, such a restriction is released. This also reflects how the strong singularity affects the stability of shock waves for the fast diffusion equations.

 \item Regarding the sufficient condition \eqref{M} for the stability of non-degenerate shock waves with much strong singularity at $u_+=0$, it involves the index $m$ for the singular fast diffusion, which indicates that the flux
function $f(u)$ is restricted to the singular fast diffusion according to different values of $m$. In fact, the flux function plays a crucial role in the stability of the viscous shock waves. As noted in the regular diffusion case \cite{KV}, to guarantee the wave stability, it is necessary to request the flux function $f(u)$  to satisfy a certain condition (Theorem 1 in \cite{KV}). On the other hand,
as showed later in the last section of numerical simulations, we give two examples for $f(u)$ such that not only the Rankine-Hugoniot \eqref{RH} and the Lax's entropy condition \eqref{lax} are satisfied, but also the crucial condition \eqref{M} is satisfied. For example,
\[
  \begin{array}{ll}
    f(u)=u^2,
  \end{array}
\]
 or
\[
  f(u)=2u^{3+2m}-u^{1+2m},~\mbox{with}~0<m\leq\frac{1}{2},
 \]
the condition \eqref{M} is satisfied for all of these examples.

\end{enumerate}

\end{remark}

\section{Preliminaries and reformulation of the problem}

For convenience in the analysis, transforming the original variables $(x,t)$ to a moving coordinate frame $(\xi=x-st,t)$, we obtain
\begin{equation}
  u_t-su_\xi+f(u)_\xi=\mu (u^m)_{\xi\xi}.
\end{equation}
Set
\begin{equation}
  \phi(\xi,t)=\int_{-\infty}^\xi[u(x,t)-U(x)]{\rm d}x.
\end{equation}
Then with \eqref{xi}, we obtain an equation for $\phi$:
\begin{equation}\label{Main}
\left\{\begin{array}{ll}
  \phi_t+g'(U)\phi_\xi-\left(\mu m\dfrac{\phi_\xi}{U^{1-m}}\right)_\xi=F+G_\xi,\\
  \phi|_{t=0}=\phi_0(\xi),
\end{array}
 \right.
\end{equation}
where
\begin{equation}\label{F}
F:=-\{f(U+\phi_\xi)-f(U)-f'(U)\phi_\xi\},
\end{equation}
and
\begin{equation}\label{G1}
G:=\mu\{(U+\phi_\xi)^m-U^m- m\dfrac{\phi_\xi}{U^{1-m}}\}.
\end{equation}

Define the solution spaces $X(0,T)$ and $Y(0,T)$ for any $0\leq T< +\infty$:

\begin{itemize}[leftmargin=*]
 \item [$\bullet$] for the non-degenerate case of $f'(u_+)<s< f'(u_-)$:
\begin{equation}
\begin{array}{lll}
  X(0,T):=
    &\left\{\phi(\xi,t)\left|\phi \in C^0([0,T];L^2_{\langle \xi\rangle_+^{\alpha_2}}),~\phi_\xi \in  C^0([0,T];H^1_{\langle \xi\rangle_+^{\alpha_1}})\cap L^2([0,T];L^2_{\langle \xi\rangle_+^{\alpha_4}}),\right.\right.\\[6pt]
    &\qquad\qquad\qquad\qquad\left.\phi_{\xi\xi}\in L^2([0,T];H^1_{\langle \xi\rangle_+^{\alpha_3}}) \right\},
\end{array}
  \end{equation}
  where $\alpha_1=\frac{2}{1-m}$ and $\alpha_2=\frac{2m}{1-m}$, $\alpha_3=\frac{3-m}{1-m}$ and $\alpha_4=\frac{1+m}{1-m}$.\\

 \item [$\bullet$] for the degenerate case of $f'(u_+)=s< f'(u_-)$:
\begin{equation}
\begin{array}{lll}
  Y(0,T):=
    &\left\{\phi(\xi,t)\left|\phi \in C^0([0,T];L^2_{\langle \xi\rangle_+^{\beta_2}}),~\phi_\xi \in  C^0([0,T];H^1_{\langle \xi\rangle_+^{\beta_1}})\cap L^2([0,T];L^2_{\langle \xi\rangle_+}),\right.\right.\\[5pt]
    &\qquad\qquad\qquad\qquad\left.\phi_{\xi\xi}\in L^2([0,T];H^1_{\langle \xi\rangle_+^{\beta_3}}) \right\},
\end{array}
    \end{equation}
    where $\beta_1=\frac{2}{k_++1-m}$, $\beta_2=\frac{k_+}{k_++1-m}$ and $\beta_3=\frac{3-m}{k_++1-m}$.
  \end{itemize}

  Define a weight function $w$ as
  \begin{equation}
    w:=w(U)=\df{U(U-u_-)}{g(U)},~\mbox{for}~U\in[u_+,u_-].
  \end{equation}
We put
\begin{equation}\label{Nt1}
  N_1(T):=
  \mathop{\sup}\limits_{t\in[0,T]}\left\{\left\|\df{\phi_{\xi\xi}(t)}{U}\right\|^2+\left\|\df{\phi_{\xi}(t)}{U}\right\|^2
+ \left\|\df{\phi(t)}{U^m}\right\|^2\right\},
\end{equation}
and
\begin{equation}\label{Nt2}
  N_2(T):=
  \mathop{\sup}\limits_{t\in[0,T]}\left\{\left\|\df{\phi_{\xi\xi}(t)}{U}\right\|^2+\left\|\df{\phi_{\xi}(t)}{U}\right\|^2
+\left\|{\phi(t)}\right\|_w^2\right\}.
\end{equation}

The global smooth solution in each solution space will be  proved by a weighted energy method combining the local existence with the $a~ priori$ estimates. Since the local existence can be proved in the standard way, we are mainly concerned with the $a~priori$ estimates which will be proved in Section 5 and Section 6.

\begin{prop}[$A~priori$ estimates for the case of $f'(u_+)<s<f'(u_-)$]\label{prop}

Under  condition  \eqref{M},  let $\phi$ be a solution in $X(0,T)$ for a positive constant $T$. Then, there exists a positive constant $\delta$ such that if
\begin{equation}
  N_1(t)\leq \delta,
\end{equation}
then for $0\leq t\leq T$,  it holds that
 \begin{eqnarray}\label{final}
   && \|{\phi_{\xi\xi}(t)}\|_{\langle \xi\rangle_+^{\alpha_1}}^2
   +\|{\phi_{\xi}(t)}\|_{\langle \xi\rangle_+^{\alpha_1}}^2
   +\|{\phi(t)}\|_{\langle \xi\rangle_+^{\alpha_2}}^2 \\
   &&+\displaystyle{\int_0^t \Big(\|{\phi_{\xi\xi\xi}(\tau)}\|_{\langle \xi\rangle_+^{\alpha_3}}^2
   +\| {\phi_{\xi\xi}(\tau)}\|_{\langle \xi\rangle_+^{\alpha_3}}^2
   +\|{\phi_{\xi}(\tau)}\|_{\langle \xi\rangle_+^{\alpha_4}}^2\Big) \;{\rm d}\tau }\notag \\
   &&\leq CN_1(0), \notag
  \end{eqnarray}
  where $\alpha_1=\frac{2}{1-m}$, $\alpha_2=\frac{2m}{1-m}$, $\alpha_3=\frac{3-m}{1-m}$ and $\alpha_4=\frac{1+m}{1-m}$.
\end{prop}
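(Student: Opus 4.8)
The plan is to prove \eqref{final} by deriving three weighted energy identities --- one for $\phi$, one for $\phi_\xi$, one for $\phi_{\xi\xi}$ --- combining them with suitable constants, and closing the resulting inequality by a continuity argument in which every nonlinear term is absorbed using the a priori smallness $N_1(t)\le\delta$; we work throughout with a given solution $\phi\in X(0,T)$. Three facts are used repeatedly. First, $N_1(t)\le\delta$ and the weighted Sobolev inequality give $\|\phi_\xi/U\|_{L^\infty}+\|\phi/U^m\|_{L^\infty}\le C\sqrt\delta$ (here one uses that $|U_\xi|/U$ is bounded, which follows from \eqref{210}), so that $U+\phi_\xi\ge\tfrac12 U>0$, $G$ in \eqref{G1} is well defined, $F=-\tfrac12 f''(U)\phi_\xi^2+O(|\phi_\xi|^3)$, and $G=\tfrac{\mu m(m-1)}{2}U^{m-2}\phi_\xi^2\,(1+O(\phi_\xi/U))$; thus $F$ is harmless while $G$, and especially $G_\xi$, is genuinely singular at $u_+=0$. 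Second, by \eqref{210} one has $\|\phi/U^m\|\sim\|\phi\|_{\langle\xi\rangle_+^{\alpha_2}}$, $\|\phi_\xi/U\|\sim\|\phi_\xi\|_{\langle\xi\rangle_+^{\alpha_1}}$, $\|\phi_{\xi\xi}/U\|\sim\|\phi_{\xi\xi}\|_{\langle\xi\rangle_+^{\alpha_1}}$, $U^{-(1+m)}\sim\langle\xi\rangle_+^{\alpha_4}$ and $U^{-(3-m)}\sim\langle\xi\rangle_+^{\alpha_3}$, which lets us pass between the two forms of the norms. Third, since all these weights blow up as $\xi\to+\infty$, each integration by parts below is first carried out against a cut-off $\chi_R$ supported in $\{\xi\le R\}$, the commutator terms being sent to $0$ as $R\to\infty$ by the far-field behaviour of $U$, $U_\xi$ in Theorem~\ref{pro}; this cut-off is suppressed in the sketch.

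\emph{Step 1 (estimate for $\phi$).} Multiply \eqref{Main} by $\phi/U^{2m}$ and integrate. The time term gives $\tfrac12\tfrac{d}{dt}\|\phi/U^m\|^2$. Integrating the convective and singular-diffusion terms by parts once each and using the profile identity \eqref{Hu} to simplify the cross terms, all zeroth-order spatial contributions collapse --- precisely because the weight $U^{-2m}$ turns $g(U)$ into $K(U)=g(U)/U^{2m}$ --- into the single term $-\tfrac12\int K''(U)\,U_\xi\,\phi^2\,{\rm d}\xi$, which is nonnegative by the convexity hypothesis \eqref{M} and $U_\xi<0$; the diffusion term also leaves the dissipation $\mu m\int U^{-(1+m)}\phi_\xi^2\,{\rm d}\xi\sim\|\phi_\xi\|_{\langle\xi\rangle_+^{\alpha_4}}^2$. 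After integrating the $G_\xi$-term by parts, the right-hand side is bounded by $C(\sqrt\delta+\delta)$ times the dissipative quantities of all three steps.

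\emph{Steps 2 and 3 (estimates for $\phi_\xi$ and $\phi_{\xi\xi}$).} Differentiate \eqref{Main} once in $\xi$ and multiply by $\phi_\xi/U^2$; differentiate twice and multiply by $\phi_{\xi\xi}/U^2$. Each time the time term yields $\tfrac12\tfrac{d}{dt}\|\phi_\xi/U\|^2$ (resp. $\tfrac12\tfrac{d}{dt}\|\phi_{\xi\xi}/U\|^2$), and the diffusion and convective terms, after integration by parts and use of \eqref{Hu}, produce the dissipation $\mu m\int U^{-(3-m)}\phi_{\xi\xi}^2\,{\rm d}\xi\sim\|\phi_{\xi\xi}\|_{\langle\xi\rangle_+^{\alpha_3}}^2$ (resp. $\sim\|\phi_{\xi\xi\xi}\|_{\langle\xi\rangle_+^{\alpha_3}}^2$) together with a positive lower-order piece $\sim\|\phi_\xi\|_{\langle\xi\rangle_+^{\alpha_4}}^2$ (resp. $\sim\|\phi_{\xi\xi}\|_{\langle\xi\rangle_+^{\alpha_4}}^2$, which is dominated by the Step~2 dissipation). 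The $\xi$- and $\xi\xi$-derivatives of the coefficients $g'(U)$ and $U^{-(1-m)}$ give terms of equal or lower order controlled via Theorem~\ref{pro}, and the nonlinear terms $F_\xi,F_{\xi\xi},G_{\xi\xi},G_{\xi\xi\xi}$ are handled as in Step~1. Demanding that the diffusion term simultaneously produce the $U^{-(3-m)}$ dissipation and the $U^{-(1+m)}$ lower-order term is what forces the weight to be $\sim U^{-2}$, and thus pins $\alpha_1,\alpha_3,\alpha_4$.

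\emph{Closure and the main obstacle.} Taking $C_1\cdot(\text{Step 1})+C_2\cdot(\text{Step 2})+C_3\cdot(\text{Step 3})$ with $C_1\gg C_2\gg C_3>0$ chosen so that every dissipation survives with a positive coefficient and every residual error carries a factor $C\sqrt\delta$, then integrating in $t$ over $[0,t]$ and using $N_1(t)\le\delta$, gives \eqref{final} with right-hand side $CN_1(0)$. The main obstacle is the treatment of the singular nonlinearity $G$ and its derivatives against the strong weights. With $G\sim U^{m-2}\phi_\xi^2$, testing $G_\xi$ against $\phi/U^{2m}$ and integrating by parts produces integrals such as $\int U^{-(1+m)}\phi_\xi^2\,(\phi_\xi/U)\,{\rm d}\xi$, $\int U^{1-m}(\phi_\xi/U)^2\,(\phi/U^m)\,{\rm d}\xi$ and $\int (\phi_\xi/U)(\phi_{\xi\xi}/U)(\phi/U^m)\,{\rm d}\xi$, for which mere finiteness, let alone absorbability, is unclear a priori. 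They become controllable only after using the pointwise bounds $\|\phi_\xi/U\|_{L^\infty},\|\phi/U^m\|_{L^\infty}\le C\sqrt\delta$ and redistributing the powers of $U$ so that each integral factors as an $L^\infty$-small quantity times a product of the dissipative quantities $U^{-(1+m)/2}\phi_\xi$, $U^{-(3-m)/2}\phi_{\xi\xi}$, $U^{-(3-m)/2}\phi_{\xi\xi\xi}$ (and using $|U_\xi|\sim U^{2-m}$ from \eqref{210}). It is precisely the requirement that these exponents balance that fixes $\alpha_1,\dots,\alpha_4$ and, in Step~1, forces the convexity assumption \eqref{M}.
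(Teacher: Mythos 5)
Your outline reproduces the paper's own strategy almost exactly: the same multiplier $\phi/U^{2m}$ at the zeroth level with the convexity hypothesis \eqref{M} giving the sign of $-\tfrac12\int K''(U)U_\xi\phi^2\,{\rm d}\xi$ and the dissipation $\int\phi_\xi^2/U^{1+m}\,{\rm d}\xi$; the same $U^{-2}$ weight at the first and second derivative levels producing $\int\phi_{\xi\xi}^2/U^{3-m}$ and $\int\phi_{\xi\xi\xi}^2/U^{3-m}$; the same passage to the $\langle\xi\rangle_+$-weights via the decay rates \eqref{210}. The organizational differences are immaterial: you combine the three estimates simultaneously with hierarchical constants $C_1\gg C_2\gg C_3$, whereas the paper proceeds sequentially (Lemmas \ref{l1}, \ref{h1}, \ref{h2}), absorbing the indefinite lower-order terms $Q_1\phi_\xi^2$, $Q_2\phi_\xi\phi_{\xi\xi}$, $E_i$-terms into the previously established dissipations; and you use only a cut-off in $\xi$, whereas the paper uses the cut-off $\eta$ in the zeroth-order step and the regularization $U_\varepsilon=U+\varepsilon$ plus Fatou's lemma in the derivative steps. (Your assertion that these lower-order pieces are ``positive'' is not justified --- they have no sign --- but your hierarchical constants repair this, so it is only a misstatement.)

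Two points, however, are genuine gaps as written. First, in Step 3 you differentiate \eqref{Main} twice and test with $\phi_{\xi\xi}/U^2$; this generates $G_{\xi\xi\xi}$, hence $\phi_{\xi\xi\xi\xi}$, which is not controlled in $X(0,T)$. You must keep the weak form --- differentiate once and test with $-\left(\phi_{\xi\xi}/U_\varepsilon^2\right)_\xi$, as the paper does --- so that only $G_{\xi\xi}$ (Lemma \ref{G}) is needed; this is a routine fix. The substantive gap is your closure claim that every nonlinear term factors as an $L^\infty$-small quantity ($\|\phi_\xi/U\|_{L^\infty}$ or $\|\phi/U^m\|_{L^\infty}$, both $\le C\sqrt\delta$) times a product of dissipative factors. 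The cubic term coming from $G_{\xi\xi}$ in the second-derivative estimate, essentially $\int|\phi_{\xi\xi\xi}|\,\phi_{\xi\xi}^2\,U^{-(4-m)}\,{\rm d}\xi$ (and its companion with $U_\xi$), cannot be organized that way: after extracting the two dissipative factors $U^{-(3-m)/2}\phi_{\xi\xi\xi}$ and $U^{-(3-m)/2}\phi_{\xi\xi}$, the leftover factor is $\phi_{\xi\xi}/U$, whose $L^\infty$ norm is \emph{not} controlled by $N_1(t)$ (only its $L^2$ norm is). The paper closes this term in \eqref{g92} by a Sobolev/interpolation argument: bound the relevant $L^\infty$ norm of the weighted $\phi_{\xi\xi}$ by $\|\cdot\|^{1/2}\|\partial_\xi\cdot\|^{1/2}$, use $\|\phi_{\xi\xi}/U\|\le\sqrt{N_1}\le\sqrt\delta$, arrive at $\delta\|\phi_{\xi\xi}\cdot\|^{1/2}\|\phi_{\xi\xi\xi}\cdot\|^{3/2}$, and finish with Young's inequality against the $\phi_{\xi\xi\xi}$ dissipation. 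Without such an interpolation step your Step 3 does not close, so you should add it explicitly.
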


Once the Proposition \ref{prop} is obtained, we can show the following global existence theorem, which implies Theorem \ref{Mthm} by defining $\phi_\xi=u-U$.

\begin{thm}[The case of $f'(u_+)<s<f'(u_-)$]\label{ph1}
Assume that \eqref{M} holds and $\phi_0\in L^2_{\langle \xi\rangle_+^{\alpha_2}}$ and $\phi_{0\xi}\in H_{\langle \xi\rangle_+^{\alpha_1}}^1$, where $\alpha_1=\frac{2}{1-m}$ and $\alpha_2=\frac{2m}{1-m}$. There exists a positive constant $\delta_0$ such that if $N_1(0)\leq \delta_0$, then the Cauchy problem for \eqref{Main} has a unique global solution $\phi\in X(0,+\infty)$. In addition, it satisfies
\begin{equation}
 \sup_{\xi\in R}|\phi_\xi(\xi,t)|\rightarrow0,~\mbox{as}~t\rightarrow+\infty.
\end{equation}

\end{thm}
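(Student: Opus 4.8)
The plan is to derive Theorem \ref{ph1} from the local existence theory together with the \emph{a priori} estimate \eqref{final} of Proposition \ref{prop} by the standard continuation argument, after which the time-decay to the shock profile follows from an auxiliary integrability bound. First I would set up the local existence: since \eqref{Main} is a quasilinear parabolic equation with the linear part $\phi_t+g'(U)\phi_\xi-(\mu m\,\phi_\xi/U^{1-m})_\xi$ being uniformly parabolic on any region where $U$ stays bounded away from $0$, and with $F,G$ quadratic in $\phi_\xi$, one obtains a unique local solution $\phi\in X(0,T_*)$ for some $T_*>0$ depending only on $\|\phi_0\|_{\langle\xi\rangle_+^{\alpha_2}}$ and $\|\phi_{0\xi}\|_{1,\langle\xi\rangle_+^{\alpha_1}}$; the only subtlety is that the weight $\langle\xi\rangle_+^{\alpha_i}$ grows at $+\infty$ while the coefficient $1/U^{1-m}$ blows up there too, so the local construction must be done in the weighted spaces directly (approximating on truncated domains $|\xi|\le R$ and passing to the limit, using the cut-off technique mentioned in the introduction). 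I would simply assert this, as the excerpt says it is "proved in the standard way."

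Next comes the continuation step. Choose $\delta_0>0$ small enough that $C\delta_0<\delta/2$, where $\delta$ and $C$ are the constants from Proposition \ref{prop}, and also small enough to be within the local existence threshold. Assume $N_1(0)\le\delta_0$. Define $T=\sup\{\,\tau\ge 0:\ \phi\in X(0,\tau)\ \text{and}\ N_1(t)\le\delta\ \text{for all}\ t\le\tau\,\}$. By local existence $T>0$; I want to show $T=+\infty$. On $[0,T)$ the hypothesis $N_1(t)\le\delta$ holds, so Proposition \ref{prop} applies and gives, via \eqref{final} and the elementary comparison $\|\phi_{\xi\xi}/U\|^2+\|\phi_\xi/U\|^2+\|\phi/U^m\|^2\le C(\|\phi_{\xi\xi}\|_{\langle\xi\rangle_+^{\alpha_1}}^2+\|\phi_\xi\|_{\langle\xi\rangle_+^{\alpha_1}}^2+\|\phi\|_{\langle\xi\rangle_+^{\alpha_2}}^2)$ valid because $U(\xi)\sim\langle\xi\rangle_+^{-1/(1-m)}$ as $\xi\to+\infty$ by \eqref{210} and $U$ is bounded below on any left half-line, the uniform bound $N_1(t)\le C N_1(0)\le C\delta_0<\delta/2$ for all $t<T$. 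If $T<+\infty$, then $N_1(T)\le\delta/2<\delta$ by continuity, and $\phi(T)$ lies in the correct weighted space, so the local existence theorem can be reapplied starting from time $T$ to extend $\phi$ to $X(0,T+T_*')$ with $N_1$ still below $\delta$ on a slightly longer interval, contradicting maximality of $T$. Hence $T=+\infty$, $\phi\in X(0,+\infty)$, and \eqref{final} holds for all $t\ge 0$.

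Finally I would prove $\sup_{\xi}|\phi_\xi(\xi,t)|\to 0$ as $t\to+\infty$. From the uniform-in-time bound just obtained, $\phi_\xi(t)$ and $\phi_{\xi\xi}(t)$ are bounded in the weighted $L^2$ spaces, hence (dropping the weights, which are $\ge 1$) bounded in $H^1(\mathbb{R})$, so by Sobolev embedding $\sup_\xi|\phi_\xi(t)|^2\le C\|\phi_\xi(t)\|\,\|\phi_{\xi\xi}(t)\|$ is bounded. To get decay to zero, I would show that $\|\phi_\xi(t)\|^2$ is integrable in $t$ on $[0,+\infty)$ — this is contained in the dissipation term $\int_0^t\|\phi_\xi(\tau)\|_{\langle\xi\rangle_+^{\alpha_4}}^2\,{\rm d}\tau$ of \eqref{final}, which bounds $\int_0^\infty\|\phi_\xi(\tau)\|^2\,{\rm d}\tau$ since $\langle\xi\rangle_+^{\alpha_4}\ge 1$ — and that $\frac{{\rm d}}{{\rm d}t}\|\phi_\xi(t)\|^2$ is bounded, or at least integrable; the latter follows by differentiating, using the equation for $\phi_\xi$, and estimating with the already-controlled quantities in \eqref{final} (here the $\int_0^t\|\phi_{\xi\xi\xi}\|^2+\|\phi_{\xi\xi}\|^2$ terms are exactly what is needed). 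Then the standard lemma "$g\in L^1(0,\infty)$, $g'\in L^1(0,\infty)$ (or $g$ uniformly continuous) implies $g(t)\to 0$" gives $\|\phi_\xi(t)\|\to 0$, and combined with the uniform bound on $\|\phi_{\xi\xi}(t)\|$ and the Sobolev inequality above, $\sup_\xi|\phi_\xi(\xi,t)|\to 0$.

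I expect the main obstacle to be entirely inside Proposition \ref{prop} (the weighted energy estimates with the carefully tuned weights $\alpha_1,\dots,\alpha_4$ and the convexity condition \eqref{M}), which is assumed here; within the proof of Theorem \ref{ph1} itself the only genuinely delicate point is making the local existence and the reapplication-at-time-$T$ rigorous in the weighted spaces where the diffusion coefficient degenerates at $\xi=+\infty$, i.e. checking that the cut-off/truncation scheme closes uniformly so that the continuation argument is legitimate.
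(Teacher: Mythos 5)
Your proposal is correct and follows essentially the same route as the paper: global existence is obtained by combining standard local existence with the \emph{a priori} estimate \eqref{final} of Proposition \ref{prop} through the usual continuation argument (which the paper asserts without detail, using the equivalences $1/U^{2}\sim\langle\xi\rangle_+^{\alpha_1}$, $1/U^{2m}\sim\langle\xi\rangle_+^{\alpha_2}$ exactly as you do), and the decay $\sup_\xi|\phi_\xi|\to0$ is proved by showing $\|\phi_\xi(t)\|^2$ is time-integrable while $\frac{\rm d}{{\rm d}t}\|\phi_\xi(t)\|^2$ is integrable in $t$ (the paper gets the latter by multiplying \eqref{Main} by $-\phi_{\xi\xi}$ and bounding by the dissipation terms $\|\phi_\xi/U^{(1+m)/2}\|^2+\|\phi_{\xi\xi}/U^{(3-m)/2}\|^2$, i.e.\ the $\alpha_4$- and $\alpha_3$-weighted terms rather than the $\phi_{\xi\xi\xi}$ term you mention, a harmless difference), finishing with the same Sobolev interpolation $\phi_\xi^2\le 2\|\phi_\xi\|\,\|\phi_{\xi\xi}\|$ and the uniform bound on $\|\phi_{\xi\xi}\|$.
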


\begin{prop}[$A~priori$ estimates for the case of  $f'(u_+)=s<f'(u_-)$]\label{prop1}
Let $\phi$ be a solution in $Y(0,T)$ for a positive constant $T$. Then, there exists a positive constant $\delta$ such that if
\begin{equation}
  N_2(t)\leq \delta,
\end{equation}
then for $0\leq t\leq T$,  it holds that
 \begin{equation}\label{finalde}
  \begin{array}{ll}
 \left\|{\phi_{\xi\xi}(t)}\right\|_{\langle \xi\rangle_+^{\beta_1}}^2&+\left\|{\phi_{\xi}(t)}\right\|_{\langle \xi\rangle_+^{\beta_1}}^2+\left\|{\phi(t)}\right\|_{\langle \xi\rangle_+^{\beta_2}}^2+\displaystyle\int_0^t \left(\left\|{\phi_{\xi\xi\xi}(\tau)}\right\|_{\langle \xi\rangle_+^{\beta_3}}^2+\left\| {\phi_{\xi\xi}(\tau)}\right\|_{\langle \xi\rangle_+^{\beta_3}}^2\right.\\[6pt]
   &\left.+\left\|{\phi_{\xi}(\tau)}\right\|_{\langle \xi\rangle_+}^2\right) \;{\rm d}\tau \leq CN_2(0),
  \end{array}
  \end{equation}
  where $\beta_1=\frac{2}{k_++1-m}$, $\beta_2=\frac{k_+}{k_++1-m}$and $\beta_3=\frac{3-m}{k_++1-m}$.
\end{prop}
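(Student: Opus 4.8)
The plan is to establish the \emph{a priori} bound \eqref{finalde} by a three-tier weighted energy argument on the perturbation equation \eqref{Main}, the three tiers controlling $\|\phi\|_w$, $\|\phi_\xi/U\|$ and $\|\phi_{\xi\xi}/U\|$, that is, the three quantities making up $N_2(t)$. The weights are dictated by the far-field decay \eqref{211}: as $\xi\to+\infty$ one has $U\sim\langle\xi\rangle_+^{-1/(k_++1-m)}$, whence $U^{-2}\sim\langle\xi\rangle_+^{\beta_1}$ and $U^{-(3-m)}\sim\langle\xi\rangle_+^{\beta_3}$; and since \eqref{1-11-1} gives $g(U)\sim\frac{f^{(k_++1)}(u_+)}{(k_++1)!}U^{k_++1}$ near $u_+=0$ (with $f^{(k_++1)}(u_+)<0$ forced by $g<0$ on $(u_+,u_-)$), the weight $w(U)=U(U-u_-)/g(U)$ behaves like $\langle\xi\rangle_+^{\beta_2}$ there and is bounded below by a positive constant everywhere; as $\xi\to-\infty$ all these weights are comparable to constants and the exponential decay of $U-u_-$ makes every integral converge. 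Because the weights blow up at $+\infty$, each integration by parts below is first performed on a truncated interval $[-R,R]$ against a smooth cut-off, the boundary terms vanishing as $R\to\infty$ by \eqref{211}; this routine step will not be displayed.

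Step~1 (zeroth order). Multiplying \eqref{Main} by $w(U)\phi$ and integrating over $\mathbb R$, the time term gives $\frac12\frac{d}{dt}\|\phi\|_w^2$, the diffusion operator yields the dissipation $\int\mu m\,w\,U^{m-1}\phi_\xi^2\,{\rm d}\xi$ together with a cross term $\int\mu m\,w_U U_\xi U^{m-1}\phi\phi_\xi\,{\rm d}\xi$, and the convection term gives $-\frac12\int(g'(U)w)_\xi\phi^2\,{\rm d}\xi$. Using the profile relation \eqref{Hu} in the form $\mu m\,U^{m-1}U_\xi=g(U)$, the cross term becomes $\int w_U(U)g(U)\phi\phi_\xi\,{\rm d}\xi$; integrating it by parts and combining with the convection term collapses them to $-\frac12\int(g(U)w(U))_{UU}\,U_\xi\,\phi^2\,{\rm d}\xi$, and since the weight is chosen precisely so that $g(U)w(U)=U(U-u_-)$ one has $(gw)_{UU}\equiv2$, so this is simply the dissipation $\int|U_\xi|\phi^2\,{\rm d}\xi$. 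Hence no convexity hypothesis in the spirit of \eqref{M} is needed, which is exactly where the ``less strong singularity'' of the degenerate case is used. The diffusive dissipation equals $\|\phi_\xi\|^2_{\langle\xi\rangle_+}$ up to constants because $w\,U^{m-1}\sim U^{-(k_++1-m)}\sim\langle\xi\rangle_+$. The nonlinearity is controlled through the Taylor expansions $F=O(\phi_\xi^2)$ and $G=-\frac12\mu m(1-m)U^{m-2}\phi_\xi^2+O(U^{m-3}|\phi_\xi|^3)$: integrating $G_\xi$ by parts against $w\phi$, the worst term is $\int\frac{\mu m\,w}{U^{1-m}}\phi_\xi^2\cdot O(\phi_\xi/U)\,{\rm d}\xi$, which is absorbed into the diffusive dissipation since $\|\phi_\xi/U\|_{L^\infty}^2\lesssim\|\phi_\xi/U\|(\|\phi_{\xi\xi}/U\|+\|\phi_\xi/U\|)\lesssim N_2(t)\le\delta$ (here $U_\xi/U\sim U^{k_++1-m}$ is bounded).

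Steps~2--3 and closing. Differentiating \eqref{Main} once and twice in $\xi$ and testing with $U^{-2}\phi_\xi$, resp.\ $U^{-2}\phi_{\xi\xi}$, gives $\frac12\frac{d}{dt}\|\phi_\xi/U\|^2$, resp.\ $\frac12\frac{d}{dt}\|\phi_{\xi\xi}/U\|^2$, together with the principal dissipations $\int\mu m\,U^{m-3}\phi_{\xi\xi}^2\,{\rm d}\xi\sim\|\phi_{\xi\xi}\|^2_{\langle\xi\rangle_+^{\beta_3}}$ and $\int\mu m\,U^{m-3}\phi_{\xi\xi\xi}^2\,{\rm d}\xi\sim\|\phi_{\xi\xi\xi}\|^2_{\langle\xi\rangle_+^{\beta_3}}$ coming from the top-order part of the diffusion. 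Every commutator produced by differentiating $g'(U)$, $U^{m-1}$ or the weight $U^{-2}$ carries extra factors of the decaying quantities $U_\xi\sim-U^{k_++2-m}$ and $U_{\xi\xi}$, so by \eqref{211} these terms are dominated by the dissipation already in hand plus the zeroth-order quantity $\|\phi\|_w^2$; the convective contributions are handled as in Step~1. The nonlinear terms $F_\xi,F_{\xi\xi},G_{\xi\xi},G_{\xi\xi\xi}$ are expanded by Taylor's formula and estimated term by term, each singular factor $U^{m-2},U^{m-3},\dots$ being matched against the corresponding dissipative weight while the remaining low-order derivatives are estimated in $L^\infty$ by one-dimensional Gagliardo--Nirenberg inequalities, so that every nonlinear contribution carries a gain $N_2(t)^{1/2}\le\delta^{1/2}$. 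Finally one forms a suitable linear combination of the three differential inequalities with coefficients $0<\lambda_2\ll\lambda_1\ll\lambda_0$ chosen so that the good terms dominate the finitely many remaining cross terms, takes $\delta$ small to absorb the nonlinearities, and integrates in $t\in[0,T]$; identifying $\|\phi\|_w\sim\|\phi\|_{\langle\xi\rangle_+^{\beta_2}}$, $\|\phi_\xi/U\|\sim\|\phi_\xi\|_{\langle\xi\rangle_+^{\beta_1}}$, $\|\phi_{\xi\xi}/U\|\sim\|\phi_{\xi\xi}\|_{\langle\xi\rangle_+^{\beta_1}}$ and the dissipative norms with the $\langle\xi\rangle_+^{\beta_3}$- and $\langle\xi\rangle_+$-weighted norms yields \eqref{finalde} with $C$ independent of $T$; continuity in $t$ of the energy (hence the $C^0$-in-time statement) follows from the energy identity in the usual way.

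The main obstacle is the bookkeeping of the competing powers of $U$ at the singular state $u_+=0$: one must verify that in every nonlinear and commutator term the order of the singularity generated by $G$ (and by differentiating $U^{m-1}$ and $U^{-2}$) never exceeds the order provided by the diffusive dissipation — this is precisely what the exponents $\beta_1,\beta_2,\beta_3$ and the normalizations $\phi_{\xi\xi}/U,\ \phi_\xi/U,\ \sqrt{w}\,\phi$ defining $N_2$ are engineered to guarantee — while at the same time justifying each integration by parts at $\xi=+\infty$ through the cut-off. By contrast the sign of the convective terms costs nothing here, thanks to the algebraic identity $g(U)w(U)=U(U-u_-)$, which is why the extra condition \eqref{M} needed in the non-degenerate Proposition \ref{prop} disappears in the degenerate regime.
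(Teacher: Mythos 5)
Your proposal is correct and follows essentially the same route as the paper: the zeroth-order estimate with multiplier $w(U)\phi$ exploiting $\,(wg)''\equiv 2\,$ and the profile relation \eqref{Hu} (so that no analogue of \eqref{M} is needed), then first- and second-order estimates with the weight $U^{-2}$ (your testing of the differentiated equations with $U^{-2}\phi_\xi$, $U^{-2}\phi_{\xi\xi}$ is, after integration by parts, the paper's multiplication of \eqref{Main} and its $\xi$-derivative by $-\bigl(\phi_\xi/U_\varepsilon^2\bigr)_\xi$ and $-\bigl(\phi_{\xi\xi}/U_\varepsilon^2\bigr)_\xi$), with the same identification of $w$, $U^{-2}$, $wU^{m-1}$, $U^{m-3}$ with the $\langle\xi\rangle_+$-weights. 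The only cosmetic differences are that the paper closes the three estimates sequentially rather than by a weighted linear combination, and regularizes the weight by $U_\varepsilon=U+\varepsilon$ with Fatou's lemma in addition to the spatial cut-off.
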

Similarly, once the Proposition \ref{prop1} is obtained, we can show the following global existence theorem, which implies Theorem \ref{ext} by defining $\phi_\xi=u-U$.

\begin{thm}[The case of  $f'(u_+)=s<f'(u_-)$]\label{ph2}
Assume $\phi_0\in L^2_{\langle \xi\rangle_+^{\beta_2}}$ and $\phi_{0\xi}\in H_{\langle \xi\rangle_+^{\beta_1}}^1$, where $\beta_1=\frac{2}{k_++1-m}$ and $\beta_2=\frac{k_+}{k_++1-m}$.  There exists a positive constant $\delta_0$ such that if $N_2(0)\leq \delta_0$, then the Cauchy problem has a unique global solution $\phi\in Y(0,+\infty)$. In addition, it satisfies
\begin{equation}
 \sup_{\xi\in R}|\phi_\xi(\xi,t)|\rightarrow0,~\mbox{as}~t\rightarrow+\infty.
\end{equation}
\end{thm}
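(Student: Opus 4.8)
The proof proceeds by the standard scheme of combining a local existence result with the global \emph{a priori} estimate of Proposition~\ref{prop1}. First I would record a local existence statement in the space $Y(0,T)$: there exist $\delta_1>0$ and, for each $\phi_0$ with $N_2(0)\le\delta_1$, a time $T_0=T_0(\delta_1)>0$ and a unique $\phi\in Y(0,T_0)$ solving \eqref{Main} with $N_2(t)\le 2N_2(0)$ on $[0,T_0]$. As the paper states, this is proved in the usual way by an iteration/fixed-point scheme together with weighted energy estimates; the point to keep in mind is that the coefficient $\mu m/U^{1-m}$ of the principal part is smooth and bounded below on every half-line $\{\xi\le R\}$, so the equation is uniformly parabolic on bounded regions and the singularity sits only at $\xi=+\infty$, where it is dominated by the weight $\langle\xi\rangle_+^{\beta_i}$. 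I would also note once and for all that the quantities in $N_2(t)$ are equivalent to the $\langle\xi\rangle_+^{\beta_i}$--norms in \eqref{finalde}: from the asymptotics \eqref{211} (with \eqref{1-11-1}) one has, as $\xi\to+\infty$, $U^{-1}\sim\langle\xi\rangle_+^{\beta_1/2}$, and since $g(u_-)=0$ with $g'(u_-)=f'(u_-)-s>0$ and $|g(U)|$ comparable to $U^{k_++1}$ near $u_+$, the weight $w$ satisfies $w\sim\langle\xi\rangle_+^{\beta_2}$; as $\xi\to-\infty$ all these weights are bounded above and below.

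The continuation argument then runs as follows. By Proposition~\ref{prop1} there are $\delta>0$ and $C$ such that $\phi\in Y(0,T)$ with $N_2(t)\le\delta$ on $[0,T]$ forces \eqref{finalde}, hence $N_2(t)\le CN_2(0)$ on $[0,T]$. Fix $\delta_0\le\min\{\delta_1,\ \delta/(2C)\}$ and assume $N_2(0)\le\delta_0$. Set $T^*:=\sup\{T\ge0:\ \phi\in Y(0,T)\text{ and }N_2(t)\le\delta\text{ for }t\in[0,T]\}$, which is positive by local existence. If $T^*<+\infty$, then on $[0,T^*)$ we have $N_2(t)\le CN_2(0)\le\delta/2$, so $\phi(\cdot,T^*)$ lies in the spaces defining $Y$ with $N_2(T^*)\le\delta/2<\delta_1$, and local existence from the initial time $T^*$ extends $\phi$ to $Y(0,T^*+T_0)$ with $N_2(t)\le\delta$ there, contradicting maximality. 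Hence $T^*=+\infty$, $\phi\in Y(0,+\infty)$, and \eqref{finalde} holds for all $t\ge0$.

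For the asymptotic decay, \eqref{finalde} together with $\langle\xi\rangle_+\ge1$ gives $\int_0^\infty\|\phi_\xi(\tau)\|^2\,{\rm d}\tau\le CN_2(0)<\infty$ and $\sup_{t\ge0}\big(\|\phi_\xi(t)\|+\|\phi_{\xi\xi}(t)\|\big)<\infty$. Differentiating and using \eqref{Main}, $\tfrac{\rm d}{{\rm d}t}\|\phi_\xi(t)\|^2=-2\int\phi_{\xi\xi}\big(-g'(U)\phi_\xi+(\mu m\phi_\xi/U^{1-m})_\xi+F+G_\xi\big)\,{\rm d}\xi$; integrating the $G_\xi$--term by parts and using $F=O(\phi_\xi^2)$, $G=O(\phi_\xi^2/U^{2-m})$ and the smallness of $\|\phi_\xi/U\|_{L^\infty}$, each term is bounded by products of the quantities appearing in the dissipation integral of \eqref{finalde} (with a Cauchy--Schwarz splitting in $t$ for the term containing $\phi_{\xi\xi\xi}$, using $1/U^{2-m}\lesssim\langle\xi\rangle_+^{\beta_3/2}\langle\xi\rangle_+^{(1-m)/(2(k_++1-m))}$ and $(1-m)/(k_++1-m)<1$), so that $\tfrac{\rm d}{{\rm d}t}\|\phi_\xi(t)\|^2\in L^1(0,\infty)$. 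A nonnegative function lying in $L^1(0,\infty)$ with derivative in $L^1(0,\infty)$ tends to $0$, hence $\|\phi_\xi(t)\|^2\to0$; then the one-dimensional Gagliardo--Nirenberg inequality gives $\|\phi_\xi(t)\|_{L^\infty}^2\le 2\|\phi_\xi(t)\|\,\|\phi_{\xi\xi}(t)\|\le C\|\phi_\xi(t)\|\to0$, which is the asserted stability; since $\phi_\xi=u-U$, this also yields Theorem~\ref{ext}.

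The genuinely hard analytic work is not in this theorem but in Proposition~\ref{prop1}, namely closing the weighted energy estimates so that the dissipation produced by the singular term $(\mu m\phi_\xi/U^{1-m})_\xi$ absorbs all error terms uniformly up to $u_+=0$; this is what forces the precise exponents $\beta_1,\beta_2,\beta_3$. Granting that, the only points requiring care here are the local existence in the \emph{singular} weighted space $Y(0,T)$ — routine once one observes the singularity is at spatial infinity rather than at a finite point — and the verification that $\tfrac{\rm d}{{\rm d}t}\|\phi_\xi\|^2\in L^1(0,\infty)$, which works precisely because the weight exponent $\beta_3$ exceeds $(1-m)/(k_++1-m)$, so the singular coefficient $1/U^{1-m}$ is absorbed by the weighted dissipation.
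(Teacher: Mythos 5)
Your proposal follows essentially the same route as the paper: the paper establishes Theorem \ref{ph2} by combining standard local existence with the \emph{a priori} estimate of Proposition \ref{prop1} (a continuation argument), and then, exactly as in its proof of Theorem \ref{ph1}, tests \eqref{Main} against $-\phi_{\xi\xi}$ to show $\frac{\rm d}{{\rm d}t}\|\phi_\xi(t)\|^2\in L^1(0,+\infty)$, concluding $\|\phi_\xi(t)\|\to 0$ and then $\sup_\xi|\phi_\xi(\xi,t)|\to 0$ via the Sobolev-type inequality $\phi_\xi^2\le 2\|\phi_\xi\|\|\phi_{\xi\xi}\|$. Your deviations (integrating the $G_\xi$-term by parts so that $\phi_{\xi\xi\xi}$ appears, and spelling out the equivalence $U^{-2}\sim\langle\xi\rangle_+^{\beta_1}$, $w\sim\langle\xi\rangle_+^{\beta_2}$ and the choice of constants in the continuation step) are cosmetic and do not change the argument.
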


\section{Existence of shock profiles}

\begin{proof}[Proof of Theorem \ref{pro}]

We assume that \eqref{system} admits a viscous shock wave $U(\xi)$  connecting $u_-$ and $u_+$, where $\xi=x-st$. Integrating the first equation of \eqref{xi} in $\xi$ over
$R$  and noting $U_\xi(\pm \infty)=0$, we get
\begin{equation}
  -s(u_+-u_-)+f(u_+)-f(u_-)=0,
\end{equation}
which leads to the Rankine-Hugoniot condition \eqref{RH}. On the other hand, the ordinary differential equation \eqref{Hu} with $h(u_\pm)=0$ admits a unique smooth solution if and only if the following condition holds: When $u_+<u_-$,
\begin{equation}\label{hsma}
  h(u)<0,~ \mbox{for}~u\in (u_+, u_-),
\end{equation}
which gives us that $g(u)<0$ when $u\in(u_+,u_-)$. Thus in order to show monotonicity of the viscous shock wave, the necessary conditions are the Rankine-Hugoniot condition \eqref{RH} and the generalized entropy condition \eqref{Sc}.

Conversely, we suppose that \eqref{RH} and \eqref{Sc} hold, then we get $U(\xi)<0$ for $U\in (u_+,u_-)$. Let $u_*=\frac{u_++u_-}{2}$ and  $U(\xi_*)=u_*$,  then we have an ordinary differential equation
\begin{equation}
  \left\{
  \begin{array}{ll}
\dfrac{{\rm d}U}{{\rm d}h(U)}={\rm d} \xi,\\[10pt]
U(\xi_*)=u_*.
   \end{array}
    \right.
\end{equation}
Denote
\begin{equation}
  H(U):= \int_{u_*}^U \dfrac{1}{h(\tau)}{\rm d} \tau=\xi-\xi_*.
  \end{equation}
  Notice that
  \begin{equation}
    H'(U)=\dfrac{1}{h(U)}<0,
  \end{equation}
  which means that $H(U)$ is invertible. From $H(U)=\xi-\xi_*$, we get $U=H^{-1}(\xi)$ which is the desired shock profile.

Furthermore, we are going to show the convergence rates for $U(\xi)$ as $\xi\rightarrow\pm\infty$. For $f'(u_+)<s<f'(u_-)$, we have
\begin{equation}
  \dfrac{{\rm d}U}{{\rm d}\xi}\sim\dfrac {f'(u_+)-s}{\mu m}U^{2-m},~\mbox{as}~\xi \rightarrow+\infty.
\end{equation}
A straightforward computation yields that
\begin{equation}
  |U_\xi|=|h(U)|\sim |U|^{2-m}, \ \mbox{ as } \xi\rightarrow+\infty,
\end{equation}
and
\begin{equation}
  |U(\xi)|\sim|\xi|^{-\frac{1}{1-m}}, \ \mbox{ as } \xi\rightarrow+\infty.
\end{equation}
Similarly,  we have
\begin{equation}
  \dfrac{{\rm d}U}{{\rm d}\xi}\sim \dfrac {f'(u_-)-s}{\mu m}u_-^{1-m}(U-u_-)=\lambda (U-u_-),~\mbox{with}~\lambda=\dfrac {f'(u_-)-s}{\mu m}u_-^{1-m}, \ \mbox{ as } \xi\rightarrow -\infty.
\end{equation}
A direct calculation gives us that
\begin{equation}
  |U_\xi|=|h(U)|\sim |U-u_-|, \ \mbox{ as } \xi\rightarrow-\infty,
\end{equation}
and
\begin{equation}
  |U-u_-|\sim e^{-\lambda|\xi|}, \ \mbox{ as } \xi\rightarrow-\infty.
\end{equation}
Especially, when $s=f'(u_+)$ we have
\begin{equation}
  \dfrac{{\rm d}U}{{\rm d}\xi}\sim \df{f^{(k_++1)}(u_+)}{\mu m}(U-u_+)^{k_++1}U^{1-m},~\mbox{as}~\xi\rightarrow
+\infty,
\end{equation}
then we obtain that
\begin{equation}
  |U_\xi|=|h(U)|\sim |U|^{k_++2-m}, \ \mbox{ as } \xi\rightarrow +\infty,
\end{equation}
and
\begin{equation}
  |U(\xi)|\sim |\xi|^{-\frac{1}{k_++1-m}}, \ \mbox{ as } \xi\rightarrow +\infty.
\end{equation}

Thus, the proof is completed.
\end{proof}

\section{${A~priori}$ estimates for the case of $f'(u_+)<s<f'(u_-)$}

In what follows, we confine ourselves to the proof of Proposition \ref{prop} by establishing the $a~priori$~estimates for the case $f'(u_+)<s<f'(u_-)$.
Let $\phi(\xi,t)\in X(0,T) $ be the solution of \eqref{Main}-\eqref{G1} with
\begin{equation}\label{pri1}
  N_1(t)\leq \delta,~\mbox{for}~t\in[0,T],
\end{equation}
which implies that
\begin{equation}\label{inf}
\sup_{t\in[0,T]}\left(\left\|\df{\phi(t)}{U^m}\right\|_{L^\infty} +\left\|\df{\phi_\xi(t)}{U}\right\|_{L^\infty}\right)\leq C\delta.
\end{equation}

\begin{lem}\label{uxi1}
  In the case of $f'(u_+)<s<f'(u_-)$, it holds that
  \begin{equation}
    |U_\xi|\leq CU^{2-m},
  \end{equation}
  and
  \begin{equation}
    |U_{\xi\xi}|\leq CU^{3-2m},
  \end{equation}
  for all $\xi\in{\rm R}$.
\end{lem}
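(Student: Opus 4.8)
The plan is to extract both inequalities directly from the first-order profile equation \eqref{Hu}, rewritten as $U_\xi = h(U) = \frac{U^{1-m}}{\mu m}\, g(U)$ with $g(U) = -sU + f(U) - f(u_+)$ (recall $u_+ = 0$ and $U(\xi) \in (0, u_-)$ for every finite $\xi$). The whole argument rests on one elementary observation about $g$: since $f$ is smooth and $g(u_+) = g(0) = 0$, Taylor expansion at $U = 0$ gives $g(U) = (f'(u_+) - s)\,U + O(U^2)$, so the quotient $g(U)/U$ extends continuously to the compact interval $[0, u_-]$ and is therefore bounded; that is, $|g(U)| \le C\,U$ for all $U \in [0, u_-]$. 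The Lax condition \eqref{lax} additionally guarantees $g'(0) = f'(u_+) - s \ne 0$, making this bound sharp, but only the upper bound is needed here.

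Granting this, the first estimate is immediate: $|U_\xi| = \frac{U^{1-m}}{\mu m}\,|g(U)| \le C\,U^{1-m}\cdot U = C\,U^{2-m}$ on $[0,u_-]$, hence for all $\xi$ (as $\xi \to -\infty$ one has $U \to u_- > 0$, so the inequality is trivial there and only carries content near $\xi = +\infty$). For the second, I would differentiate $U_\xi = h(U)$ in $\xi$ to get $U_{\xi\xi} = h'(U)\,h(U)$, compute $h'(U) = \frac{1}{\mu m}\big[(1-m) U^{-m} g(U) + U^{1-m} g'(U)\big]$, and multiply out:
\[
U_{\xi\xi} = \frac{1}{(\mu m)^2}\Big[(1-m)\,U^{1-2m} g(U)^2 + U^{2-2m} g(U)\,g'(U)\Big].
\]
Using $|g(U)| \le C\,U$ once more, together with the boundedness of $g'$ on the compact range of $U$ (smoothness of $f$), each bracketed term is $\le C\,U^{3-2m}$, which yields $|U_{\xi\xi}| \le C\,U^{3-2m}$.

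The only delicate point — and it is minor — is that the factors $U^{-m}$ and $U^{1-2m}$ blow up as $U \to 0^+$ once $m \ge \frac12$; this is harmless precisely because they are always multiplied by a power of $g(U)$, which vanishes at least linearly in $U$, and $3 - 2m > 0$ for $0 < m < 1$, so no singularity survives. I expect no genuine obstacle in this lemma: it is a structural consequence of $g(u_+) = 0$ together with the profile ODE, and its role in the sequel is to trade the powers of $U$ generated by the fast-diffusion term $\big(\phi_\xi / U^{1-m}\big)_\xi$ for algebraic decay in $\xi$ through the asymptotics $U \sim |\xi|^{-1/(1-m)}$ established in Theorem \ref{pro}.
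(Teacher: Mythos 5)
Your proof is correct and takes essentially the same route as the paper: there the lemma is dismissed as a direct corollary of Theorem \ref{pro}, whose proof rests on exactly the identity $U_\xi=h(U)=\frac{U^{1-m}}{\mu m}g(U)$ and the vanishing of $g$ at $u_+=0$ that you exploit. Your write-up is if anything slightly more complete, since you state the global bound $|g(U)|\le CU$ on $[0,u_-]$ rather than only the asymptotics at $\xi\to+\infty$, and you carry out the differentiation $U_{\xi\xi}=h'(U)h(U)$ needed for the second estimate, which the paper's one-line proof leaves implicit.
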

\begin{proof}
  This lemma is a direct corollary of Theorem \ref{pro}.
\end{proof}

\begin{lem}\label{F}
Under the $a~priori$ assumption \eqref{pri1}, it holds that
\begin{equation}
|F|\leq C\phi_\xi^2,
\end{equation}
and
\begin{equation}
|F_\xi|\leq C(U^{2-m}|\phi_\xi|^2+|\phi_\xi||\phi_{\xi\xi}|).
 \end{equation}

\end{lem}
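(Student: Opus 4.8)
The statement is the familiar fact that $F$ is the quadratic Taylor remainder of $f$ at the base point $U$ with increment $\phi_\xi$, and $F_\xi$ its $\xi$-derivative; the plan is to make this precise and then use Lemma~\ref{uxi1} to handle the $U_\xi$ factor that appears upon differentiation. First I would record a reduction supplied by the $a~priori$ assumption: from \eqref{inf} we have $\abs{\phi_\xi}\le C\delta\,U\le C\delta\,u_-$, so $u:=U+\phi_\xi$ ranges over a fixed compact neighborhood of $[u_+,u_-]$ (for $\delta$ small, say $u\in[-u_-,2u_-]$). Since $f$ is smooth, there is a constant $C$ with $\abs{f'}+\abs{f''}+\abs{f'''}\le C$ on this fixed set, so any Taylor remainder of $f$ or $f'$ taken between $U$ and $u$ is controlled by the appropriate power of $\abs{\phi_\xi}$ times $C$.

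For the first estimate, write the integral form of Taylor's remainder,
\[
F=-\phi_\xi^2\int_0^1(1-\theta)\,f''\bigl(U+\theta\phi_\xi\bigr)\,{\rm d}\theta ,
\]
and bound the integrand by $C$ to get $\abs{F}\le C\phi_\xi^2$. For the second estimate, differentiate $F=-\{f(u)-f(U)-f'(U)\phi_\xi\}$ using $u_\xi=U_\xi+\phi_{\xi\xi}$ and regroup:
\[
F_\xi=-\bigl\{\bigl(f'(u)-f'(U)-f''(U)\phi_\xi\bigr)U_\xi+\bigl(f'(u)-f'(U)\bigr)\phi_{\xi\xi}\bigr\}.
\]
The first parenthesis is a quadratic Taylor remainder of $f'$, hence $O(\phi_\xi^2)$, and the second is $O(\abs{\phi_\xi})$ by the mean value theorem; invoking $\abs{U_\xi}\le C\,U^{2-m}$ from Lemma~\ref{uxi1} then gives $\abs{F_\xi}\le C\bigl(U^{2-m}\abs{\phi_\xi}^2+\abs{\phi_\xi}\abs{\phi_{\xi\xi}}\bigr)$, which is the claimed bound.

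There is no genuine obstacle here; the only point needing care is the preliminary reduction, namely checking that $u=U+\phi_\xi$ stays inside a fixed region on which $f$ and its first three derivatives are uniformly bounded — this is exactly what the smallness of $N_1(t)$, via \eqref{inf}, guarantees. Everything else is the elementary Taylor expansion above together with the pointwise decay estimate for $U_\xi$.
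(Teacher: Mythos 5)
Your proof is correct and follows essentially the same route as the paper: differentiate $F=-\{f(U+\phi_\xi)-f(U)-f'(U)\phi_\xi\}$, bound the two Taylor remainders of $f$ and $f'$ by $C\phi_\xi^2$ and $C|\phi_\xi|$ respectively, and invoke $|U_\xi|\le CU^{2-m}$ from Lemma~\ref{uxi1}. Your explicit preliminary check that $U+\phi_\xi$ stays in a fixed compact set where $f$, $f'$, $f''$, $f'''$ are bounded is exactly the role that \eqref{inf} plays implicitly in the paper's argument.
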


\begin{proof}
From the representation of $F=-\{f(U+\phi_\xi)-f(U)-f'(U)\phi_\xi\}$ and Taylor's expansion, a direct calculation gives
\begin{equation}
\begin{split}
|F_\xi|&=|\left\{f(U+\phi_\xi)-f(U)-f'(U)\phi_\xi\right\}_\xi|\\
&=|\left(f'(U+\phi_\xi)-f'(U)-f''(U)\phi_\xi\right)U_\xi+\left(f'(U+\phi_\xi)-f'(U)\right)\phi_{\xi\xi}|\\&\leq |f'(U+\phi_\xi)-f'(U)-f''(U)\phi_\xi||U_\xi|+|f'(U+\phi_\xi)-f'(U)||\phi_{\xi\xi}|\\
&\leq C(|\phi_\xi|^2|U_\xi|+|\phi_\xi||\phi_{\xi\xi}|).
 \end{split}
 \end{equation}
which implies Lemma \ref{F} combining with \eqref{inf} and Lemma \ref{uxi1}.
\end{proof}

\begin{lem}\label{G}
Under the $a~priori$ assumption \eqref{pri1}, it holds that
\begin{equation}
|G|\leq \dfrac{C\phi_\xi^2}{U^{2-m}},
\end{equation}
\begin{equation}
|G_\xi|\leq %C\left\{\df{|U_\xi||\phi_{\xi}|^2}{U^{3-m}}+\df{|\phi_{\xi\xi}||\phi_\xi|}{U^{2-m}}\right\},\\
\left\{\dfrac{|\phi_\xi|^2}{U}+\dfrac{|\phi_\xi||\phi_{\xi\xi}|}{U^{2-m}}\right\},
\end{equation}
and
\begin{equation}
\begin{split}
|G_{\xi\xi}|\leq C %\left\{ \dfrac{|U_{\xi\xi}||\phi_\xi|^2}{U^{3-m}}+\dfrac{|\phi_{\xi\xi\xi}||\phi_\xi|}{U^{2-m}}+\dfrac{|U_\xi|^2|\phi_\xi|^2}{U^{4-m}}+\dfrac{|U_\xi||\phi_{\xi\xi}||\phi_\xi|}{U^{3-m}}+\dfrac{|\phi_{\xi\xi}|^2}{U^{2-m}}\right\}.
\left\{\dfrac{|\phi_{\xi\xi\xi}||\phi_\xi|}{U^{2-m}}+\dfrac{|\phi_\xi|^2}{U^{m}}+\dfrac{|\phi_{\xi\xi}||\phi_\xi|}{U}+\dfrac{|\phi_{\xi\xi}|^2}{U^{2-m}}\right\}.
\end{split}
 \end{equation}
\end{lem}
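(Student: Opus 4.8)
\textbf{Proof proposal for Lemma \ref{G}.}

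The plan is to treat $G=\mu\{(U+\phi_\xi)^m-U^m-m U^{m-1}\phi_\xi\}$ as the second-order Taylor remainder of the map $v\mapsto v^m$ at $v=U$, and then to differentiate this representation twice in $\xi$, substituting the decay bounds of Lemma \ref{uxi1} and the smallness from \eqref{inf} at each stage. For the $G$ estimate itself, I would write, by Taylor's theorem with integral (or Lagrange) remainder, $G=\mu\,\binom{m}{2}(U+\theta\phi_\xi)^{m-2}\phi_\xi^2$ for some $\theta\in(0,1)$; since $|\phi_\xi|\le C\delta\,U$ by \eqref{inf}, the intermediate value $U+\theta\phi_\xi$ is comparable to $U$ (i.e. $\tfrac12 U\le U+\theta\phi_\xi\le 2U$ once $\delta$ is small), hence $(U+\theta\phi_\xi)^{m-2}\sim U^{m-2}=U^{-(2-m)}$, which gives $|G|\le C\phi_\xi^2/U^{2-m}$. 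This ``freezing the argument against $U$'' trick is the workhorse for all three bounds.

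Next, for $G_\xi$ I would differentiate directly: $G_\xi=\mu\{(m(U+\phi_\xi)^{m-1}-mU^{m-1})U_\xi - m(m-1)U^{m-2}U_\xi\phi_\xi + (m(U+\phi_\xi)^{m-1}-mU^{m-1})\phi_{\xi\xi}\}$ — more cleanly, group the $U_\xi$-terms as the first derivative of the Taylor remainder (so they are $O(U^{m-2}\phi_\xi^2\,|U_\xi|)$) and the $\phi_{\xi\xi}$-term as $(f'$-type difference$)\cdot\phi_{\xi\xi}$ of size $O(U^{m-1}|\phi_\xi|\,|\phi_{\xi\xi}|)$. Using $|U_\xi|\le CU^{2-m}$ from Lemma \ref{uxi1}, the first group becomes $O(U^{m-2}\cdot U^{2-m}\phi_\xi^2)=O(\phi_\xi^2)$; but to match the stated bound $|\phi_\xi|^2/U$ I should instead keep one factor of $|\phi_\xi|\le C\delta U$ explicit only where needed and otherwise retain $U^{m-2}|U_\xi|\,|\phi_\xi|^2$, and then note $U^{m-2}|U_\xi|=U^{m-2}\cdot U^{2-m}\cdot(|U_\xi|/U^{2-m})\le C$, so actually the $U_\xi$ term is $O(\phi_\xi^2)\le O(\phi_\xi^2/U)$ near $\xi\to+\infty$ where $U\to0$ (and harmless near $\xi\to-\infty$ where $U\sim u_-$). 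The $\phi_{\xi\xi}$ term gives $U^{m-1}|\phi_\xi||\phi_{\xi\xi}|=U^{-(2-m)}\cdot U\cdot U^{m-1}\cdot\dots$; being careful, $m(U+\phi_\xi)^{m-1}-mU^{m-1}=m(m-1)(U+\theta'\phi_\xi)^{m-2}\phi_\xi$ has size $O(U^{m-2}|\phi_\xi|)=O(U^{-(2-m)}|\phi_\xi|/U\cdot U)$, hence the term is $O(|\phi_\xi||\phi_{\xi\xi}|/U^{2-m})$, as claimed. The same bookkeeping, differentiating once more and repeatedly replacing $|U_\xi|\le CU^{2-m}$, $|U_{\xi\xi}|\le CU^{3-2m}$, and $|\phi_\xi|\le C\delta U$ (and $|\phi_{\xi\xi}|\le C\delta U$ from $N_1(t)\le\delta$), produces the four terms in the $G_{\xi\xi}$ estimate: a $\phi_{\xi\xi\xi}$-term from hitting $\phi_{\xi\xi}$ with $\partial_\xi$, a $\phi_\xi^2/U^m$-term from $U_{\xi\xi}$ (since $U^{m-2}|U_{\xi\xi}|\le CU^{m-2}U^{3-2m}=CU^{1-m}$, times $\phi_\xi^2/U$ gives $\phi_\xi^2/U^m$), a $|\phi_{\xi\xi}||\phi_\xi|/U$-term, and a $|\phi_{\xi\xi}|^2/U^{2-m}$-term.

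The main obstacle I anticipate is purely organizational rather than conceptual: one must expand $\partial_\xi^2 G$ into its roughly half-dozen terms, and for each term repeatedly invoke the ``intermediate argument $\sim U$'' comparison to convert every power $(U+\theta\phi_\xi)^{m-k}$ into $U^{m-k}$, then carefully balance the powers of $U$ coming from $U_\xi,U_{\xi\xi}$ against the negative powers, using $|\phi_\xi|\lesssim\delta U$ only as a last resort (since using it too eagerly loses the sharp weights later needed in Section 5). One should also double-check the two asymptotic regimes separately — $\xi\to+\infty$ where $U\to0$ and the singular negative powers of $U$ are the whole point, versus $\xi\to-\infty$ where $U\sim u_-$ is bounded away from $0$ and every stated bound is trivially true — and confirm uniformity on compact $\xi$-sets by continuity. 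No new idea beyond Taylor's theorem and Lemma \ref{uxi1} is required; the care lies in not throwing away powers of $U$ prematurely.
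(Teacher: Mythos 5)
Your proposal follows essentially the same route as the paper's proof: differentiate $G$ directly, recognize the bracketed combinations as Taylor remainders of $v\mapsto v^{m}$, $v^{m-1}$, $v^{m-2}$ at $v=U$, use $|\phi_\xi|\le C\delta U$ from \eqref{inf} to replace $(U+\theta\phi_\xi)^{m-k}$ by $U^{m-k}$, and then insert $|U_\xi|\le CU^{2-m}$, $|U_{\xi\xi}|\le CU^{3-2m}$ from Lemma \ref{uxi1}, which yields exactly the stated bounds. Two minor blemishes, neither fatal: the first-order Taylor remainder of $v\mapsto v^{m-1}$ has size $U^{m-3}\phi_\xi^2$, not $U^{m-2}\phi_\xi^2$, so the $U_\xi$-group in $G_\xi$ is $O(\phi_\xi^2/U)$ (precisely the claimed bound) rather than the stronger $O(\phi_\xi^2)$ you assert in passing; and the pointwise bound $|\phi_{\xi\xi}|\le C\delta U$ does not follow from $N_1(t)\le\delta$ (which controls $\phi_{\xi\xi}/U$ only in $L^2$), but none of the four terms in the $G_{\xi\xi}$ estimate actually requires it.
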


\begin{proof}
From the representation of $G=\mu\{(U+\phi_\xi)^m-U^m- m\dfrac{\phi_\xi}{U^{1-m}}\}$, by a direct computation, we have
\begin{equation}
\begin{split}
 \left| G_\xi\right|&=\left|\mu m\left\{U_\xi\left(\dfrac{1}{(U+\phi_\xi)^{1-m}}-\dfrac{1}{U^{1-m}}+(1-m)\dfrac{\phi_\xi}{U^{2-m}}\right)+\phi_{\xi\xi}\left(\dfrac{1}{(U+\phi_\xi)^{1-m}}-\dfrac{1}{U^{1-m}}\right)\right\}\right|\\
&\leq C \left\{\left|U_\xi\right|\left|\dfrac{1}{(U+\phi_\xi)^{1-m}}-\dfrac{1}{U^{1-m}}+(1-m)\dfrac{\phi_\xi}{U^{2-m}}\right|+|\phi_{\xi\xi}|\left|\dfrac{1}{(U+\phi_\xi)^{1-m}}-\dfrac{1}{U^{1-m}}\right|\right\}\\[4pt]
&\leq C \left\{\df{|U_\xi||\phi_{\xi}|^2}{U^{3-m}}+\df{|\phi_{\xi\xi}||\phi_\xi|}{U^{2-m}}\right\},
\end{split}
\end{equation}
and
 \begin{equation}
\begin{split}
\left|G_{\xi\xi}\right|
&=\mu m\left|\left\{U_{\xi\xi}\left(\dfrac{1}{(U+\phi_\xi)^{1-m}}-\dfrac{1}{U^{1-m}}+(1-m)\dfrac{\phi_\xi}{U^{2-m}}\right)
+\phi_{\xi\xi\xi}\left(\dfrac{1}{(U+\phi_\xi)^{1-m}}-\dfrac{1}{U^{1-m}}\right)\right.\right.\\
&\quad-(1-m)U_\xi^2\left(\dfrac{1}{(U+\phi_\xi)^{2-m}}-\dfrac{1}{U^{2-m}}+(2-m)\dfrac{\phi_\xi}{U^{3-m}}\right)\\
&\quad-2(1-m)U_\xi\phi_{\xi\xi}\left(\dfrac{1}{(U+\phi_\xi)^{2-m}}-\dfrac{1}{U^{2-m}}\right)
-(1-m)\left.\left.\dfrac{\phi_{\xi\xi}^2}{(U+\phi_\xi)^{2-m}}\right\}\right|\\[4pt]
&\leq C\left\{ \dfrac{|U_{\xi\xi}||\phi_\xi|^2}{U^{3-m}}+\dfrac{|\phi_{\xi\xi\xi}||\phi_\xi|}{U^{2-m}}
+\dfrac{|U_\xi|^2|\phi_\xi|^2}{U^{4-m}}+\dfrac{|U_\xi||\phi_{\xi\xi}||\phi_\xi|}{U^{3-m}}+\dfrac{|\phi_{\xi\xi}|^2}{U^{2-m}}\right\}.
 \end{split}
 \end{equation}
By virtue of \eqref{inf} and Theorem \ref{uxi1}, the proof of Lemma \ref{G} is completed.
\end{proof}

Let us introduce a cut-off function  $\eta(\xi)$, which will be applied to treat the singularities of solution at the far fields $\xi=\pm\infty$.

\begin{lem}\label{cutoff}
  Given $L>0$, define the so-called cut-off function $\eta{(\xi)}$ by
  \begin{equation}\label{cutoff-1}
    \eta{(\xi)}:=\left\{
    \begin{array}{lll}
    1,~&\mbox{for}~0\leq|\xi|<L,\\
      \exp\left(1-\frac{L}{2L-|\xi|}\right),~&\mbox{for}~L\leq|\xi|<2L,\\
      0,&\mbox{for}~|\xi|\geq 2L.
    \end{array}\right.
  \end{equation}
  Then, it holds that
  \begin{equation}
    |\eta_\xi(\xi)|\leq \df{C}{L},~\mbox{for}~\xi\in{\rm R}.
  \end{equation}
\end{lem}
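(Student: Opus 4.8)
The final statement to prove is Lemma~\ref{cutoff}, which introduces the cut-off function $\eta(\xi)$ and asserts the derivative bound $|\eta_\xi(\xi)|\le C/L$.

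\medskip

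\textbf{Plan of proof.} The function $\eta$ is piecewise defined, identically $1$ for $|\xi|<L$ and identically $0$ for $|\xi|\ge 2L$, so on those two ranges $\eta_\xi\equiv 0$ and there is nothing to bound. The only region requiring work is the transition zone $L\le|\xi|<2L$, where $\eta(\xi)=\exp\!\left(1-\frac{L}{2L-|\xi|}\right)$. First I would check continuity at the junction points $|\xi|=L$ and $|\xi|=2L$: at $|\xi|=L$ the exponent is $1-\frac{L}{L}=0$, so $\eta=e^0=1$, matching the inner piece; as $|\xi|\to 2L^-$ the exponent $1-\frac{L}{2L-|\xi|}\to-\infty$, so $\eta\to 0$, matching the outer piece, and in fact all derivatives vanish there so $\eta$ is smooth. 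Then I would differentiate on $L\le|\xi|<2L$. Writing $r=|\xi|$ and using $\frac{d r}{d\xi}=\operatorname{sgn}\xi$, one gets
\begin{equation}
\eta_\xi(\xi)=\exp\!\left(1-\frac{L}{2L-r}\right)\cdot\left(-\frac{L}{(2L-r)^2}\right)\operatorname{sgn}\xi,
\end{equation}
so that
\begin{equation}
|\eta_\xi(\xi)|=\exp\!\left(1-\frac{L}{2L-r}\right)\cdot\frac{L}{(2L-r)^2}.
\end{equation}

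\medskip

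\textbf{The key estimate.} Set $t=\frac{L}{2L-r}$, which ranges over $[1,+\infty)$ as $r$ runs over $[L,2L)$. Then $2L-r=L/t$, hence $\frac{L}{(2L-r)^2}=\frac{L}{L^2/t^2}=\frac{t^2}{L}$, and therefore
\begin{equation}
|\eta_\xi(\xi)|=\frac{1}{L}\,t^2 e^{1-t}=\frac{e}{L}\,t^2 e^{-t}.
\end{equation}
The elementary fact that $t^2 e^{-t}$ is bounded on $[1,+\infty)$ (it attains its maximum at $t=2$, with value $4e^{-2}$) immediately gives $|\eta_\xi(\xi)|\le \frac{4e^{-1}}{L}\le \frac{C}{L}$ with $C=4e^{-1}$, independent of $L$ and $\xi$. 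Combining this with the vanishing of $\eta_\xi$ outside the transition zone completes the proof.

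\medskip

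\textbf{Main obstacle.} There is no serious obstacle here; this is a routine verification. The only points demanding a little care are (i) handling the absolute value $|\xi|$ correctly when differentiating, which only contributes a harmless factor $\operatorname{sgn}\xi$ of modulus one, and (ii) recognizing that the apparent blow-up of the algebraic factor $\frac{L}{(2L-r)^2}$ as $r\to 2L^-$ is killed by the exponential decay — made transparent by the substitution $t=\frac{L}{2L-r}$, which also cleanly exposes the $1/L$ scaling. One should also note in passing that $\eta$ is in fact $C^\infty$ (all derivatives of $\eta$ vanish as $|\xi|\to 2L^-$), although only the $C^1$ bound is needed in the sequel.
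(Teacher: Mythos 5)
Your verification is correct and is exactly the routine computation the paper has in mind (the paper simply omits it as straightforward): differentiate on the transition zone, and the substitution $t=\frac{L}{2L-|\xi|}$ gives $|\eta_\xi|=\frac{e}{L}t^2e^{-t}\le \frac{4e^{-1}}{L}$, with $\eta_\xi\equiv 0$ elsewhere. One small caveat: your passing claim that $\eta$ is $C^\infty$ is not right globally --- all derivatives do vanish at $|\xi|=2L$, but at $|\xi|=L$ the one-sided derivatives are $0$ and $\mp\frac{1}{L}$, so $\eta$ is only Lipschitz there; this does not affect the stated bound, which holds for the piecewise (a.e.) derivative.
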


 \begin{proof} The proof is straightforward (see also \cite{cut1,cut2}).
   We omit its details.
 \end{proof}

\begin{comment}
  \begin{lem}\label{loc}
  Under the same conditions as those in Proposition \ref{prop}, it holds that
  $\frac{\phi}{U^{m}}\in L^2_{loc}({\rm R})$,
and
  $\frac{\phi_\xi}{U}\in L^2_{loc}({\rm R})$,
  for $t\in[0,T]$.
\end{lem}

\begin{lem}\label{loc1}
  Under the same conditions as those in Proposition \ref{prop1}, it holds that
  ${\phi}\in L^2_{loc}({\rm R})$,
and
  $\frac{\phi_\xi}{U}\in L^2_{loc}({\rm R})$,
  for $t\in[0,T]$.
\end{lem}
\end{comment}

\begin{lem}\label{l1}
Under the same conditions as those in Proposition \ref{prop}, it holds that
\begin{equation}\label{L1}
\left\|\df{\phi(t)}{U^{m}}\right\|^2+\int_0^t\int_R \dfrac{\phi_\xi^2}{U^{1+m}}~{\rm d}\xi{\rm d}\tau \leq C\left\|\df{\phi(0)}{U^{m}}\right\|^2,
\end{equation}
for $t\in[0,T]$ provided $\delta\ll 1$. Thus, it holds that
\begin{equation}\label{L11}
\|\phi(t)\|_{\langle \xi\rangle_+^{\alpha_2}}^2+\int_0^t\int_0^t\|\phi_\xi(\tau)\|_{\langle \xi\rangle_+^{\alpha_4}}^2\,{\rm d}\tau\leq C\|\phi(0)\|_{\langle \xi\rangle_+^{\alpha_2}}^2,
\end{equation}
with $\alpha_2={\frac{2m}{1-m}}$ and $\alpha_4={\frac{1+m}{1-m}}$ for $t\in[0,T]$ provided $\delta\ll 1$.
\end{lem}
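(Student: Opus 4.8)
### Proof proposal for Lemma \ref{l1}

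The plan is to carry out the basic weighted $L^2$-estimate on the perturbation equation \eqref{Main}, using the weight $1/U^{2m}$ (so that $\phi/U^m$ appears), but with the cut-off $\eta$ from Lemma \ref{cutoff} inserted to control the boundary terms at $\xi=\pm\infty$. First I would multiply \eqref{Main} by $\eta^2\phi/U^{2m}$ and integrate over $\mathbb{R}$. The time-derivative term gives $\frac12\frac{d}{dt}\int \eta^2 \phi^2/U^{2m}\,{\rm d}\xi$ (note $U=U(\xi)$ is time-independent, so the weight passes through the $t$-derivative cleanly). The convection term $g'(U)\phi_\xi$ and the second-order term $-(\mu m\,\phi_\xi/U^{1-m})_\xi$ are integrated by parts; after collecting derivatives that fall on the weight $\eta^2/U^{2m}$ and on $U^{1-m}$, the principal part should produce $+\mu m\int \eta^2 \phi_\xi^2/U^{1+m}\,{\rm d}\xi$ together with a ``good'' first-order term of the shape $\int \eta^2 (\text{something nonnegative})\,\phi^2/U^{2m}\,{\rm d}\xi$. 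Here is where the precise choice of the weight $1/U^{2m}$ matters: the combination of $g'(U)\phi_\xi\cdot\phi/U^{2m}$ (rewritten via $\phi_\xi\phi=\frac12(\phi^2)_\xi$ and integrated by parts) with the first-order piece coming from $(\,1/U^{1-m})_\xi = (1-m)U_\xi/U^{2-m}$ acting on $\phi_\xi\phi$ must, using $g(U)=\mu m\,U_\xi/U^{1-m}$ from \eqref{Hu} and the entropy condition $g(U)<0$, combine into a nonnegative zeroth-order term. This is the structural heart of the estimate and the reason the weight is $U(U-u_-)/g(U)$-adjacent; I expect a computation along the lines $-\frac12\int \eta^2 (g'(U)/U^{2m})_\xi\,\phi^2 - (1-m)\mu m\int\eta^2 (U_\xi/U^{3+m})\cdot(\text{lower order})$ to assemble into $c\int \eta^2 \phi^2/U^{\text{(something)}}\ge 0$ after using $U_\xi<0$.

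Next I would handle the right-hand side $F+G_\xi$. Pairing against $\eta^2\phi/U^{2m}$: for $F$, use Lemma \ref{F} ($|F|\le C\phi_\xi^2$) and Cauchy–Schwarz with $\delta$-smallness, $\int \eta^2 |F||\phi|/U^{2m} \le C\int \eta^2 \phi_\xi^2 |\phi|/U^{2m} \le C\delta\int\eta^2\phi_\xi^2/U^{1+m}$ (absorbing $|\phi|/U^m\le C\delta$ from \eqref{inf} and bounding the residual $U$-power), which is absorbed into the good diffusive term for $\delta\ll1$. For $G_\xi$, integrate by parts to move the derivative onto $\eta^2\phi/U^{2m}$, producing terms with $|G|(\eta^2|\phi_\xi|/U^{2m} + \eta|\eta_\xi||\phi|/U^{2m} + \eta^2|\phi||U_\xi|/U^{1+2m})$; then invoke the bound $|G|\le C\phi_\xi^2/U^{2-m}$ from Lemma \ref{G}, again using \eqref{inf} to extract a factor $\delta$ and Cauchy–Schwarz to land everything inside $C\delta\int\eta^2\phi_\xi^2/U^{1+m}$ plus a harmless $L^{-2}$-term from $\eta_\xi$. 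The cut-off error terms carry $|\eta_\xi|\le C/L$ and are supported on $L\le|\xi|\le 2L$; on that annulus one controls $\phi^2/U^{2m}$ and $\phi_\xi^2/U^{1+m}$ by the a priori bound $N_1(t)\le\delta$ together with the known decay $U\sim|\xi|^{-1/(1-m)}$ as $\xi\to+\infty$ (so the weights $\langle\xi\rangle_+^{\alpha_2},\langle\xi\rangle_+^{\alpha_4}$ are exactly the $U$-powers), and these error integrals vanish as $L\to\infty$.

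Finally I would integrate in time over $[0,t]$, pass to the limit $L\to\infty$ using monotone convergence on the good terms and the decay-based vanishing of the $\eta_\xi$-terms, and arrive at
\begin{equation*}
\left\|\frac{\phi(t)}{U^m}\right\|^2 + \int_0^t\int_{\mathbb{R}}\frac{\phi_\xi^2}{U^{1+m}}\,{\rm d}\xi\,{\rm d}\tau \le C\left\|\frac{\phi(0)}{U^m}\right\|^2,
\end{equation*}
which is \eqref{L1}; then \eqref{L11} follows since, as $\xi\to+\infty$, $U^{-2m}\sim\langle\xi\rangle_+^{2m/(1-m)}=\langle\xi\rangle_+^{\alpha_2}$ and $U^{-(1+m)}\sim\langle\xi\rangle_+^{(1+m)/(1-m)}=\langle\xi\rangle_+^{\alpha_4}$, while as $\xi\to-\infty$ the weights $\langle\xi\rangle_+$ are bounded and $U\to u_->0$ is bounded below, so the weighted norms and the $U$-weighted integrals are equivalent. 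The main obstacle I anticipate is the algebraic bookkeeping in the first paragraph: verifying that the zeroth-order terms generated by integrating by parts the convection term and the non-constant diffusion coefficient assemble, via the identity \eqref{Hu} and the sign conditions $g(U)<0$, $U_\xi<0$, into a nonnegative quantity rather than something of indefinite sign — this is exactly where the specific exponent $2m$ in the weight is forced, and getting the powers of $U$ to match so that the $F$- and $G_\xi$-contributions are genuinely absorbable (not merely bounded) is the delicate point.
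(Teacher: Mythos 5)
Your overall architecture matches the paper's proof: multiply \eqref{Main} by $\phi/U^{2m}$ (the paper derives the pointwise identity first and then multiplies by the cut-off $\eta$ rather than testing with $\eta^2\phi/U^{2m}$, an immaterial difference), control the $\eta_\xi$-terms by $C\delta/L$ using $N_1(t)\le\delta$, let $L\to\infty$, absorb the $F$- and $G_\xi$-contributions into $\int\!\!\int \phi_\xi^2/U^{1+m}$ via Lemmas \ref{F}, \ref{G} and \eqref{inf}, and finally translate $U$-weights into $\langle\xi\rangle_+$-weights through the decay \eqref{210}. All of that is sound and is essentially what the paper does.

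However, there is a genuine gap at exactly the point you flag as the ``structural heart'' and leave unresolved. After the integrations by parts, the zeroth-order term is precisely
\begin{equation*}
-\frac{1}{2}\Bigl(\frac{g(U)}{U^{2m}}\Bigr)''\,U_\xi\,\phi^2 \;=\; -\frac{1}{2}K''(U)\,U_\xi\,\phi^2 ,
\end{equation*}
and its favorable sign does \emph{not} follow from the entropy condition $g(U)<0$ together with $U_\xi<0$, as your proposal asserts; it follows from the convexity hypothesis \eqref{M}, i.e.\ $K''(u)\ge 0$ on $[u_+,u_-]$, combined with $U_\xi<0$. This hypothesis is part of Proposition \ref{prop} (inherited from Theorem \ref{Mthm}) precisely because without it the term has indefinite sign and cannot be dropped or absorbed — for a general flux satisfying only \eqref{Sc}, $K''$ may change sign, and then $\int |K''|\,|U_\xi|\,\phi^2$ is not controlled by the diffusive term $\int \phi_\xi^2/U^{1+m}$ (no Poincar\'e-type mechanism is available here). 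Relatedly, your remark that the weight $1/U^{2m}$ is ``$U(U-u_-)/g(U)$-adjacent'' conflates the two cases: the weight $w(U)=U(U-u_-)/g(U)$ is used in the degenerate case (Lemma \ref{le2}), where $(wg)''\equiv 2$ makes the zeroth-order term automatically nonnegative, whereas in the present non-degenerate case the weight is $1/U^{2m}$ and the sign is purchased by the extra assumption \eqref{M}. To complete your argument, replace the hoped-for cancellation by the direct observation that \eqref{M} and $U_\xi<0$ make $-\frac12 K''(U)U_\xi\phi^2\ge 0$, so this term can be kept on the left (or discarded); the rest of your proposal then goes through as written.
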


\begin{proof}
Multiplying \eqref{Main} by $\dfrac{\phi}{U^{2m}}$, we obtain
\begin{equation}\label{l127}
\begin{array}{lll}
  &\left(\df{1}{2}\dfrac{\phi^2}{U^{2m}}\right)_t+\left(\dfrac{1}{2}g'(U)\dfrac{\phi^2}{U^{2m}} -\mu m\dfrac{\phi\phi_\xi}{U^{1+m}}-\mu m^2\dfrac{\phi^2U_\xi}{U^{2+m}}-G\dfrac{\phi}{U^{2m}}\right)_\xi\\[12pt]
&-\df{1}{2}\left(\df{g(U)}{U^{2m}}\right)''U_\xi\phi^2
+\mu m\dfrac{\phi_\xi^2}{U^{1+m}}=F\dfrac{\phi}{U^{2m}}-G\left(\dfrac{\phi_\xi}{U^{2m}}-2m\dfrac{U_\xi\phi}{U^{2m+1}}\right).
\end{array}
\end{equation}
Note that, when we  integrate \eqref{l127} over $(-\infty,\infty)$, the second term of \eqref{l127} will not disappear at the far fields $x=\pm\infty$, and possesses some singularities. This is an obstacle. In order to treat it, we are going to apply the cut-off function technique as follows.
 
Multiplying \eqref{l127} by the cut-off function $\eta(\xi)$ introduced in \eqref{cutoff-1},  integrating the resultant  equation over ${\rm R}$, and noting $K(U)=\frac{g(U)}{U^{2m}}$, we get
\begin{equation}\label{27}
\begin{array}{lll}
&\df{{\rm d}}{{\rm d}t}\displaystyle \int_{-2L}^{2L} \dfrac{\phi(t)^2\eta}{U^{2m}}{\rm d}\xi+\displaystyle \int_{-2L}^{2L}-\df{1}{2}K''(U)U_\xi\phi^2\eta{\rm d}\xi+\displaystyle \int_{-2L}^{2L}\mu m\dfrac{\phi_\xi^2\eta}{U^{1+m}}{\rm d}\xi\\[14pt]
&=\D\int_{-2L}^{2L} \dfrac{F\phi\eta}{U^{2m}}-G\left(\dfrac{\phi_\xi}{U^{2m}}-2m\dfrac{U_\xi\phi}{U^{2m+1}}\right)\eta{\rm d}\xi\\[14pt]
&\D+\int_{-2L}^{2L}\eta_\xi(\xi)\left(\dfrac{1}{2}g'(U)\dfrac{\phi^2}{U^{2m}} -\mu m\dfrac{\phi\phi_\xi}{U^{1+m}}-\mu m^2\dfrac{\phi^2U_\xi}{U^{2+m}}-G\dfrac{\phi}{U^{2m}}\right){\rm d}\xi.
\end{array}
\end{equation}
Given the convexity of $K(u)$,  for the second term on the left hand side of \eqref{27},  we have
\begin{equation}
 \int_{-2L}^{2L}-\df{1}{2}K''(U)U_\xi\phi^2\eta~{\rm d}\xi\geq C\int_{-2L}^{2L}|K''(U)||U_\xi|\phi^2\eta~{\rm d}\xi.
\end{equation}
 As for the last term on the right hand side of \eqref{27}, let us first state the following corollary:  $\frac{\phi}{U^{m}}\in L^2_{loc}({\rm R})$
and
  $\frac{\phi_\xi}{U}\in L^2_{loc}({\rm R})$
  for $t\in[0,T]$. Consequently, it can be inferred directly from \eqref{Nt1} and \eqref{pri1}.  Furthermore, we have
  \begin{equation}
    \int_{-2L}^{2L}\df{\phi^2}{U^{2m}}+\df{\phi^2}{U^2}\,{\rm d}\xi\leq  \int_R\df{\phi^2}{U^{2m}}+\df{\phi^2}{U^2}\,{\rm d}\xi\leq N_1(t)\leq \delta.
  \end{equation}
  Then, we have
\begin{equation}
  \int_{-2L}^{2L}\dfrac{1}{2}\eta_\xi(\xi)g'(U)\dfrac{\phi^2}{U^{2m}} {\rm d}\xi\leq C\int_{-2L}^{2L}\dfrac{|\eta_\xi(\xi)|\phi^2}{U^{2m}} {\rm d}\xi\leq \df{C}{L}\int_{-2L}^{2L}\dfrac{\phi^2}{U^{2m}} {\rm d}\xi\leq\df{C\delta}{L}.
\end{equation}
Similarly, applying the Cauchy-Schwarz inequality, we have
\begin{equation}
\int_{-2L}^{2L}-\mu m\eta_\xi(\xi)\dfrac{\phi\phi_\xi}{U^{1+m}}{\rm d}\xi\leq C\int_{-2L}^{2L}\dfrac{|\eta_\xi(\xi)||\phi||\phi_\xi|}{U^{1+m}}{\rm d}\xi\leq \df{C}{L} \int_{-2L}^{2L}\dfrac{\phi^2}{U^{2m}}+\dfrac{\phi_\xi^2}{U^2 }{\rm d}\xi\leq\df{C\delta}{L}.
\end{equation}
By virtue of Lemma \ref{uxi1}, we get that $|U_\xi|\leq CU^{2-m}$ for $U\in[u_+,u_-]$. Thus, we obtain
\begin{equation}
\int_{-2L}^{2L}-\mu m^2\dfrac{\eta_\xi(\xi)\phi^2U_\xi}{U^{2+m}}{\rm d}\xi\leq C\int_{-2L}^{2L}\dfrac{|\eta_\xi(\xi)|\phi^2|U_\xi|}{U^{2+m}}{\rm d}\xi\leq \df{C}{L} \int_{-2L}^{2L}\dfrac{\phi^2}{U^{2m}}{\rm d}\xi\leq\df{C\delta}{L}.
\end{equation}
Using \eqref{inf}, Lemma \ref{G} and the Cauchy-Schwarz inequality, we have
\begin{equation}\label{l134}
\begin{array}{lll}
  &\D\int_{-2L}^{2L}-G\eta_\xi(\xi)\dfrac{\phi}{U^{2m}}{\rm d}\xi\\[14pt]
  &\D\leq C\int_{-2L}^{2L}\dfrac{|\eta_\xi(\xi)||\phi||\phi_\xi|^2}{U^{2+m}}{\rm d}\xi\\[14pt]
  &\D\leq \df{C}{L}\left \|\df{\phi(t)}{U^m}\right\|_{L^\infty}\int_{-2L}^{2L}\dfrac{\phi_\xi^2}{U^2 }{\rm d}\xi\\[14pt]
  &\D\leq\df{C\delta^2}{L}.
\end{array}
\end{equation}
Combining \eqref{27}-\eqref{l134}, we get
\begin{equation}
  \begin{array}{lll}
&\df{{\rm d}}{{\rm d}t}\displaystyle \int_{-2L}^{2L} \dfrac{\phi(t)^2\eta}{U^{2m}}{\rm d}\xi+\int_{-2L}^{2L}|K''(U)||U_\xi|\phi^2\eta{\rm d}\xi+\displaystyle \int_{-2L}^{2L}\dfrac{\phi_\xi^2\eta}{U^{1+m}}{\rm d}\xi\\[14pt]
&\leq\D C\left(\int_{-2L}^{2L} \dfrac{F\phi\eta}{U^{2m}}-G\left(\dfrac{\phi_\xi}{U^{2m}}-2m\dfrac{U_\xi\phi}{U^{2m+1}}\right)\eta\,{\rm d}\xi+\df{\delta
+\delta^2}{L}\right).
\end{array}
\end{equation}
Integrating the above equation over $[0,t]$ and taking $L\rightarrow+\infty$, we obtain
\begin{equation}\label{l137}
\begin{array}{ll}
&\D\left\|\df{\phi(t)}{U^{m}}\right\|^2+\int_0^t\int_R \dfrac{\phi_\xi^2}{U^{1+m}}~{\rm d}\xi{\rm d}\tau \\[14pt]
&\leq C\left(\D\left\|\df{\phi(0)}{U^{m}}\right\|^2+\int_0^t\int_R
 \dfrac{F\phi}{U^{2m}}-G\left(\dfrac{\phi_\xi}{U^{2m}}-2m\dfrac{U_\xi\phi}{U^{2m+1}}\right)~{\rm d}\xi{\rm d}\tau\right).
\end{array}
\end{equation}
For the last term on the right hand side of \eqref{l137}, by virtue of \eqref{inf} and Lemma  \ref{F}, we have
\begin{equation}\label{l138}
\begin{split}%{lll}
    &\D \int_0^t\int_R \dfrac{F\phi}{U^{2m}}{\rm d}\xi\,{\rm d}\tau\\[5pt]
    &\D\leq C\int_0^t\int_R \df{|\phi||\phi_\xi|^2}{U^{2m}}\,{\rm d}\xi{\rm d}\tau\\[5pt]
     &\D\leq C  \int_0^t \left \|{\phi}\right\|_{L^\infty} \int_R \df{|\phi_\xi|^2}{U^{2m}}\,{\rm d}\xi{\rm d}\tau\\[5pt]
    &\D\leq C\delta \int_0^t\int_R \df{|\phi_\xi|^2}{U^{1+m}}\,{\rm d}\xi{\rm d}\tau.
\end{split}
\end{equation}
Similarly, taking into consideration  \eqref{inf}, Lemma \ref{uxi1} and Lemma \ref{G}, we get
\begin{equation}\label{l139}
\begin{split}
 &\int_0^t\int_R -G\left(\dfrac{\phi_\xi}{U^{2m}}-2m\dfrac{U_\xi\phi}{U^{2m+1}}\right)\,{\rm d}\xi{\rm d}\tau \\[5pt]
 &\leq C\int_0^t\int_R\dfrac{|\phi_\xi|^3}{U^{2+m}}
+\dfrac{|\phi_\xi|^2|\phi||U_\xi|}{U^{3+m}}\,{\rm d}\xi{\rm d}\tau\\[5pt]
&\leq C\left(\int_0^t\left\|\df{\phi_\xi}{U}\right\|_{L^\infty}\int_R\dfrac{|\phi_\xi|^2}{U^{1+m}}\,{\rm d}\xi{\rm d}\tau+\int_0^t\left\|\df{\phi}{U^m}\right\|_{L^\infty}\int_R\dfrac{|\phi_\xi|^2}{U^{1+m}}\,{\rm d}\xi{\rm d}\tau  \right)\\[5pt]
&\leq C\delta \int_0^t\int_R\dfrac{|\phi_\xi|^2}{U^{1+m}}\,{\rm d}\xi{\rm d}\tau.
\end{split}
\end{equation}
Substituting \eqref{l138} and \eqref{l139} into \eqref{l137} and taking $\delta$ sufficiently small, we get \eqref{L1}.

When $f'(u_+)<s<f'(u_-)$, we have
\begin{equation}
U(\xi)\sim\left\{
  \begin{array}{ll}
    |\xi|^{-\frac{1}{1-m}},&~\mbox{as}~\xi\rightarrow+\infty,\\
1,&~\mbox{as}~\xi\rightarrow-\infty,
  \end{array}
  \right.
  \end{equation}
which implies that $\frac{1}{U^{2m}}\sim {\langle \xi\rangle_+^{\alpha_2}}$ and  $\frac{1}{U^{1+m}}\sim {\langle \xi\rangle_+^{\alpha_4}}$ for $\xi\in {\rm R}$. We derive \eqref{L11}.
\end{proof}

Next, we estimate $\phi_\xi$.

\begin{lem}\label{h1}
Under the same conditions as those in Proposition \ref{prop}, it holds that
\begin{equation}\label{H11}
\left\|\df{{\phi_\xi(t)}}{U}\right\|^2+\int_0^t\int_R \dfrac{\phi_{\xi\xi}^2}{U^{3-m}} \,{\rm d}\xi{\rm d}\tau \leq C\left\|\df{{\phi_\xi(0)}}{U}\right\|^2
+ C\left\|\df{{\phi(0)}}{U^{m}}\right\|^2,
\end{equation}
for $t\in[0,T]$ provided $\delta\ll 1$. Thus, it holds that
\begin{equation}\label{h11}
\|\phi_\xi(t)\|_{\langle \xi\rangle_+^{\alpha_1}}^2+\int_0^t\int_0^t\|\phi_{\xi\xi}(\tau)\|_{\langle \xi\rangle_+^{\alpha_3}}^2\,{\rm d}\tau\leq C\|\phi_\xi(0)\|_{\langle \xi\rangle_+^{\alpha_1}}^2+C\|\phi(0)\|_{\langle \xi\rangle_+^{\alpha_2}}^2,
\end{equation}
with $\alpha_1={\frac{2}{1-m}}$, $\alpha_2={\frac{2m}{1-m}}$ and $\alpha_3={\frac{3-m}{1-m}}$ for $t\in[0,T]$ provided $\delta\ll 1$.
\end{lem}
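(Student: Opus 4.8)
\textbf{Proof proposal for Lemma \ref{h1}.}
The plan is to run a weighted $L^2$ energy estimate on the level of $\phi_\xi$, with weight $U^{-2}$, and to close it by absorbing the lower-order and nonlinear terms into the already-controlled quantities from Lemma \ref{l1}. Concretely, I would differentiate the equation \eqref{Main} in $\xi$ to obtain an equation for $\psi := \phi_\xi$ of the form $\psi_t + (g'(U)\psi)_\xi - \left(\mu m \dfrac{\psi_\xi}{U^{1-m}}\right)_\xi = F_\xi + G_{\xi\xi}$, multiply by $\dfrac{\psi}{U^2}$, and integrate. The parabolic term produces the good dissipation: integration by parts on $-\left(\mu m \dfrac{\psi_\xi}{U^{1-m}}\right)_\xi \cdot \dfrac{\psi}{U^2}$ yields a positive principal part $\sim \displaystyle\int \dfrac{\psi_\xi^2}{U^{3-m}}$ plus commutator terms carrying $U_\xi$, which by Lemma \ref{uxi1} satisfy $|U_\xi|/U \le CU^{1-m}$, so these are of strictly lower order in the singularity and can be handled by Cauchy--Schwarz against the dissipation and against $\displaystyle\int \dfrac{\phi_\xi^2}{U^{1+m}}$ (the dissipation integral already bounded by Lemma \ref{l1}). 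The transport term $(g'(U)\psi)_\xi$ against $\psi/U^2$ similarly contributes a $\frac{1}{2}\partial_\xi$ of $g'(U)\psi^2/U^2$ — which integrates to a boundary term requiring the cut-off — plus a term $\left(\dfrac{g'(U)}{U^2}\right)_\xi \psi^2$ that must be controlled; here one needs that $\left(g'(U)/U^2\right)'U_\xi$ has a favorable sign or is dominated, using $g(U)<0$ from \eqref{Sc} and the decay estimates.

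As in Lemma \ref{l1}, the naive integration over $\mathbb{R}$ is illegitimate because the total-derivative (flux) term does not vanish at $\xi \to +\infty$ due to the singularity of $U^{-2}$-type weights; so I would multiply through by the cut-off $\eta(\xi)$ from Lemma \ref{cutoff}, integrate over $[-2L,2L]$, and then estimate every term carrying $\eta_\xi$ by $C/L$ times a quantity bounded by $N_1(t)\le\delta$ — exactly the mechanism already used to pass from \eqref{27} to \eqref{l137}. Taking $L\to+\infty$ kills those terms and leaves the clean inequality. For the right-hand side, I would use Lemma \ref{F} for $F_\xi$ (giving $U^{2-m}\phi_\xi^2 + |\phi_\xi||\phi_{\xi\xi}|$, each of which, multiplied by $|\psi|/U^2 = |\phi_\xi|/U^2$ and integrated in time, is $\le C\delta$ times the dissipation $\int\int \phi_{\xi\xi}^2/U^{3-m}$ plus $\int\int\phi_\xi^2/U^{1+m}$ after using \eqref{inf}), and Lemma \ref{G} for $G_{\xi\xi}$ — the term $|\phi_{\xi\xi\xi}||\phi_\xi|/U^{2-m}$ is the most dangerous one, but it appears multiplied by $|\phi_\xi|/U^2$, and $\|\phi_\xi/U\|_{L^\infty}\le C\delta$ converts it into $\delta$ times $\int\int \phi_{\xi\xi\xi}^2/U^{\text{(something)}}$, which is \emph{not} controlled at this stage. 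So I expect this top-order piece must instead be integrated by parts once (moving a $\xi$-derivative off $\phi_{\xi\xi\xi}$) to trade it for $\phi_{\xi\xi}^2/U^{3-m}$ plus further lower-order singular terms — this integration by parts, together with tracking exactly which power of $U$ survives, is the technical heart of the proof.

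After assembling, one gets $\dfrac{\rm d}{{\rm d}t}\left\|\dfrac{\phi_\xi(t)}{U}\right\|^2 + c\displaystyle\int_R \dfrac{\phi_{\xi\xi}^2}{U^{3-m}}\,{\rm d}\xi \le C\delta\displaystyle\int_R\left(\dfrac{\phi_{\xi\xi}^2}{U^{3-m}} + \dfrac{\phi_\xi^2}{U^{1+m}}\right){\rm d}\xi$, so for $\delta\ll1$ the first RHS term is absorbed, and integrating in $t$ while invoking \eqref{L1} of Lemma \ref{l1} to bound $\int_0^t\int_R \phi_\xi^2/U^{1+m}$ by $C\|\phi(0)/U^m\|^2$ gives \eqref{H11}. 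Finally, to deduce \eqref{h11}, I would plug in the far-field asymptotics of Theorem \ref{pro}: since $U(\xi)\sim|\xi|^{-1/(1-m)}$ as $\xi\to+\infty$ and $U\sim 1$ as $\xi\to-\infty$, one has $U^{-2}\sim\langle\xi\rangle_+^{\alpha_1}$ with $\alpha_1 = 2/(1-m)$ and $U^{-(3-m)}\sim\langle\xi\rangle_+^{\alpha_3}$ with $\alpha_3 = (3-m)/(1-m)$, and $U^{-2m}\sim\langle\xi\rangle_+^{\alpha_2}$, converting \eqref{H11} into \eqref{h11}. The main obstacle, as noted, is controlling the $\phi_{\xi\xi\xi}$-term in $G_{\xi\xi}$ without a priori third-derivative dissipation — resolved by a careful integration by parts rather than a crude bound — and, secondarily, bookkeeping the exact singular weights so that every commutator term is genuinely lower order than the dissipation $U^{-(3-m)}\phi_{\xi\xi}^2$.
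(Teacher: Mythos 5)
Your core energy computation is, after the integration by parts that you yourself flag as the ``technical heart'', essentially the paper's: multiplying the $\xi$-differentiated equation by $\phi_\xi/U^2$ and then moving one derivative off the $G_{\xi\xi}$-term is the same as testing the original equation \eqref{Main} with $-\left(\phi_\xi/U^2\right)_\xi$, which is exactly what the paper does. In that formulation the dreaded $\phi_{\xi\xi\xi}$ never appears: the right-hand side is only $F_\xi$ tested by $\phi_\xi/U^2$ and $G_\xi$ tested by $\left(\phi_\xi/U^2\right)_\xi$, and these are handled, as you indicate, by Lemmas \ref{F} and \ref{G}, the bounds $|U_\xi|\le CU^{2-m}$, $\|\phi_\xi/U\|_{L^\infty}\le C\delta$, and the dissipation $\int_0^t\int\phi_\xi^2U^{-(1+m)}$ already furnished by Lemma \ref{l1}. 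Your absorption scheme and the final conversion $U^{-2}\sim\langle\xi\rangle_+^{\alpha_1}$, $U^{-(3-m)}\sim\langle\xi\rangle_+^{\alpha_3}$, $U^{-2m}\sim\langle\xi\rangle_+^{\alpha_2}$ also coincide with the paper. (A minor slip: the $\int\phi_\xi^2U^{-(1+m)}$ contribution from the cross term $Q_2\phi_\xi\phi_{\xi\xi}$ comes with an $O(1)$ constant, not $C\delta$; harmless, since you bound its time integral by Lemma \ref{l1} anyway.)

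The genuine gap is in your justification for discarding the flux terms. You propose to reuse the cut-off mechanism of Lemma \ref{l1}, claiming every $\eta_\xi$-term is $\le C/L$ times a quantity bounded by $N_1(t)\le\delta$. At the $\phi_\xi$-level this fails, because the flux now carries the stronger weight $U^{-(3-m)}$: the diffusion produces a boundary contribution of the form $\mu m\,\phi_\xi\phi_{\xi\xi}\,U^{-(3-m)}\eta_\xi$. Writing it as $\frac{\phi_\xi}{U}\cdot\frac{\phi_{\xi\xi}}{U}\cdot U^{-(1-m)}$ and using $U^{-(1-m)}\sim\langle\xi\rangle$ on the annulus $L\le\xi\le 2L$, the best the a priori bound gives is $\frac{C}{L}\cdot L\cdot N_1(t)\le C\delta$ (and a Cauchy--Schwarz split produces $\phi_\xi^2U^{-(4-2m)}$ or $\phi_{\xi\xi}^2U^{-(3-m)}$ at fixed time, neither of which is contained in $N_1$), so the term does not vanish as $L\to\infty$; nor can it be absorbed into the dissipation $\int\phi_{\xi\xi}^2U^{-(3-m)}\eta$, since $|\eta_\xi|$ is not dominated by a small multiple of $\eta$ near $|\xi|=2L$. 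This is precisely why the paper abandons the cut-off in this lemma and instead regularizes the weight: it sets $U_\varepsilon=U+\varepsilon$, tests with $-\left(\phi_\xi/U_\varepsilon^2\right)_\xi$, integrates over $\mathbb{R}\times[0,t]$ (the flux terms now carry the bounded weight $U_\varepsilon^{-2}$ and vanish for $\phi\in X(0,T)$), closes the estimate uniformly in $\varepsilon$, and only then sends $\varepsilon\to0$ via Fatou's lemma to recover the $U^{-2}$ and $U^{-(3-m)}$ weights in \eqref{H11}. As written, your passage $L\to\infty$ is not justified; you need either this $\varepsilon$-regularization step (which you never mention) or some other substitute for the failed $C\delta/L$ bookkeeping.
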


\begin{proof}

To prevent singularity in the weight function, we define $U_\varepsilon=U+\varepsilon$, where $\varepsilon$ is a positive constant. Multiplying \eqref{Main} by $-\left(\dfrac{\phi_\xi}{U^2_\varepsilon}\right)_\xi$, we get
\begin{equation}\label{h1s}
\begin{split}
 & \left(\dfrac{1}{2}\dfrac{\phi^2_\xi}{U^2_\varepsilon}\right)_t -\left(\df{\phi_t\phi_\xi}{U^2_\varepsilon}+g'(U)\dfrac{\phi^2_\xi}{U^2_\varepsilon}-\df{\phi_\xi}{U_\varepsilon^2}F\right)_\xi+\left(g''(U)\dfrac{U_\xi}{U_\varepsilon^2}+2\mu m(1-m)\dfrac{U^2_\xi}{U^{2-m}U_\varepsilon^3}\right)\phi_\xi^2\\
  &+\mu m \dfrac{1}{U^{1-m}U^2_\varepsilon}\phi^2_{\xi\xi}
  +\left(\dfrac{g'(U)}{U_\varepsilon^2}-\dfrac{\mu m(1-m)U_\xi}{U^{2-m}U^2_\varepsilon}
-\dfrac{2\mu mU_{\xi}}{U^{1-m}U_\varepsilon^3}\right)\phi_\xi\phi_{\xi\xi}\\
&=\dfrac{\phi_\xi}{U^2_\varepsilon} F_\xi-\left(\dfrac{\phi_\xi}{U^2_\varepsilon}\right)_\xi G_{\xi}.
\end{split}
\end{equation}
Let us denote
\begin{equation}
\begin{array}{ll}
  Q_1(U):=g''(U)\dfrac{U_\xi}{U_\varepsilon^2}+2\mu m(1-m)\dfrac{U^2_\xi}{U^{2-m}U_\varepsilon^3},
 %&=\left(f''(U)U_\varepsilon+{2(1-m)}\left(\df{f(U)}{U}-s\right)\right)\df{U_\xi}{U_\varepsilon^3}.
\end{array}
\end{equation}
and
\begin{equation}
  Q_2(U):=\dfrac{g'(U)}{U_\varepsilon^2}-\dfrac{\mu m(1-m)U_\xi}{U^{2-m}U^2_\varepsilon}
-\dfrac{2\mu mU_{\xi}}{U^{1-m}U_\varepsilon^3}.
\end{equation}
By virtue of Lemma \ref{uxi1}, we have $|U_\xi|\leq CU^{2-m}$ for $U\in[u_+,u_-]$, which implies
\begin{equation}
 | Q_1(U)|\leq C\left(\df{|U_\xi|}{U^2}+\df{|U_\xi|^2}{U^{5-m}}\right) \leq C\left(\df{1}{U^m}+\df{1}{U^{1+m}}\right)\leq\df{C}{U^{1+m}},~\mbox{for}~U\in[u_+,u_-].
\end{equation}
Similarly, by the Cauchy-Schwarz inequality for any $\sigma>0$, we have
\begin{equation}
  \left|Q_2(U)\phi_\xi\phi_{\xi\xi}\right|\leq C\df{|\phi_\xi\phi_{\xi\xi}|}{U^2_\varepsilon}\leq C\left(\sigma \dfrac{\phi^2_{\xi\xi}}{U^{1-m}U_\varepsilon^2}+\dfrac{1}{\sigma}\dfrac{\phi^2_{\xi}}{U^{1+m}}\right),~\mbox{for}~U\in[u_+,u_-].
\end{equation}
Integrating \eqref{h1s} over ${\rm R}\times [0,t]$, we get
\begin{equation}\label{h1s6}
\begin{array}{ll}
  &\D\int_R \dfrac{\phi^2_\xi(t)}{U^2_\varepsilon}\,{\rm d}\xi+\int_0^t\int_R \dfrac{\phi_{\xi\xi}^2}{U^{1-m}U_\varepsilon^2} \,{\rm d}\xi{\rm d}\tau\leq C\int_R \dfrac{\phi^2_\xi(0)}{U^2_\varepsilon}\,{\rm d}\xi +C\sigma \int_0^t\int_R\dfrac{\phi^2_{\xi\xi}}{U^{1-m}U_\varepsilon^2}\,{\rm d}\xi{\rm d}\tau\\[14pt]
&\D+C\left(\dfrac{1}{\sigma}+1\right)\int_0^t\int_R\dfrac{\phi^2_{\xi}}{U^{1+m}}\,{\rm d}\xi{\rm d}\tau\D+C\int_0^t\int_R\dfrac{\phi_\xi}{U^2_\varepsilon} F_\xi\,{\rm d}\xi{\rm d}\tau-C\int_0^t\int_R\left(\dfrac{\phi_\xi}{U^2_\varepsilon}\right)_\xi G_{\xi}\,{\rm d}\xi{\rm d}\tau.
\end{array}
\end{equation}
Next,  our attention shifts to  estimating  the last two terms on the right hand side of \eqref{h1s6}. Taking into account \eqref{inf}, Lemmas \ref{uxi1}-\ref{G} and  applying the Cauchy-Schwarz inequality for any $\sigma>0$, we have
\begin{equation}
   \begin{array}{lll}
    &\D \int_0^t\int_R\dfrac{\phi_\xi}{U^2_\varepsilon} F_\xi\,{\rm d}\xi{\rm d}\tau\\[14pt]
    &\D\leq C\int_0^t\int_R\df{|\phi_\xi|^3}{U^{m}}+\df{|\phi_\xi|^2|\phi_{\xi\xi}|}{U^2_\varepsilon}\,{\rm d}\xi{\rm d}\tau\\[14pt]
    &\D\leq C  \int_0^t \left\|\df{\phi_\xi}{U}\right\|_{L^\infty}\int_RU^{1-m}{\phi_\xi^2}+\df{|\phi_\xi||\phi_{\xi\xi}|}{U_\varepsilon}\,{\rm d}\xi{\rm d}\tau\\[14pt]
    &\D\leq C\delta\int_0^t\int_R\left(\dfrac{1}{\sigma}+1\right)\dfrac{\phi_\xi^2}{U^{1+m}}+\sigma\df{\phi_{\xi\xi}^2}{U^{1-m}U^2_\varepsilon}\,{\rm d}\xi{\rm d}\tau,
   \end{array}
 \end{equation}
 and
 \begin{equation}
   \begin{array}{llll}
       &\D\int_0^t\int_R-\left(\dfrac{\phi_\xi}{U_\varepsilon^2}\right)_\xi G_{\xi}\,{\rm d}\xi{\rm d}\tau\\[14pt]
&\D\leq C\int_0^t\int_R \left(\dfrac{|\phi_{\xi\xi}|}{U^2_\varepsilon}+\dfrac{|\phi_\xi(t)|| U_{\xi}|}{U^{3}_\varepsilon}\right)\left(\dfrac{|\phi_\xi|^2}{U}+\dfrac{|\phi_\xi||\phi_{\xi\xi}|}{U^{2-m}}
\right)\,{\rm d}\xi{\rm d}\tau\\[14pt]
%&\D\leq C\int_0^t\int_R \dfrac{|\phi_\xi|^2|\phi_{\xi\xi}||U_\xi|}{U^{3-m}U_\varepsilon^2}+\dfrac{|\phi_\xi|^3|U_\xi|^2}{U^{3-m}U^3_\varepsilon}+\dfrac{|\phi_\xi| |\phi_{\xi\xi}|^2}{U^{2-m}U^2_\varepsilon}+\dfrac{|\phi_\xi|^2|\phi_{\xi\xi}||U_\xi|}{U^{2-m}U^3_\varepsilon} \,{\rm d}\xi{\rm d}\tau\\[14pt]
&\D\leq C \int_0^t\left\|\df{\phi_\xi}{U}\right\|_{L^\infty} \int_R \dfrac{|\phi_\xi||\phi_{\xi\xi}|}{U_\varepsilon^2}
+\dfrac{|\phi_\xi|^2}{U^{1+m}}
+\dfrac{ |\phi_{\xi\xi}|^2}{U^{1-m}U^2_\varepsilon}\,{\rm d}\xi{\rm d}\tau\\[14pt]
&\D\leq C\delta\int_0^t\int_R\left(\dfrac{1}{\sigma}+1\right)\dfrac{\phi_\xi^2}{U^{1+m}}+(\sigma+1)\df{\phi_{\xi\xi}^2}{U^{1-m}U^2_\varepsilon}\,{\rm d}\xi{\rm d}\tau.
   \end{array}
 \end{equation}
 Consequently, choosing  $\delta$ and $\sigma$ appropriately small, we get
\begin{equation}
\int_R \dfrac{\phi^2_\xi(t)}{U^2_\varepsilon}\,{\rm d}\xi+\int_0^t\int_R \dfrac{\phi_{\xi\xi}^2}{U^{1-m}U_\varepsilon^2} \,{\rm d}\xi{\rm d}\tau \leq C\left\|\dfrac{\phi_\xi(0)}{U}\right\|^2+C\int_0^t\int_R \dfrac{\phi_\xi^2}{U^{1+m}} \,{\rm d}\xi{\rm d}\tau.
\end{equation}
Thanks to Lemma \ref{l1}, we have
\begin{equation}
  \int_0^t\int_R \dfrac{\phi_\xi^2}{U^{1+m}} \,{\rm d}\xi{\rm d}\tau\leq C\left\|\df{\phi(0)}{U^{m}}\right\|^2.
\end{equation}
Taking $\varepsilon\rightarrow 0$ and applying Fatou's lemma, we deduce that
\begin{equation}
\begin{array}{lll}
&\displaystyle \varliminf\limits_{\varepsilon\rightarrow0}\left(\int_R \dfrac{\phi^2_\xi(t)}{U^2_\varepsilon}\,{\rm d}\xi+\int_0^t\int_R \dfrac{\phi_{\xi\xi}^2}{U^{1-m}U_\varepsilon^2} \,{\rm d}\xi{\rm d}\tau\right)\\[14pt]
   &\geq\displaystyle\int_R \varliminf\limits_{\varepsilon\rightarrow0}\dfrac{\phi^2_\xi(t)}{U^2_\varepsilon}\,{\rm d}\xi+\displaystyle\int_0^t\int_R \varliminf\limits_{\varepsilon\rightarrow0}\dfrac{\phi_{\xi\xi}^2}{U^{1-m}U_\varepsilon^2} \,{\rm d}\xi{\rm d}\tau\\[12pt]
   &=\displaystyle\int_R \dfrac{\phi^2_\xi(t)}{U^2}\,{\rm d}\xi+\int_0^t\int_R \dfrac{\phi_{\xi\xi}^2}{U^{3-m}} \,{\rm d}\xi{\rm d}\tau .
\end{array}
\end{equation}
Thus, we get \eqref{H11}.

When $f'(u_+)<s<f'(u_-)$, since
\begin{equation}
U(\xi)\sim\left\{
  \begin{array}{ll}
    |\xi|^{-\frac{1}{1-m}},&~\mbox{as}~\xi\rightarrow+\infty,\\
1,&~\mbox{as}~\xi\rightarrow-\infty,
  \end{array}
  \right.
  \end{equation}
  we similarly have
 $\frac{1}{U^{2}}\sim {\langle \xi\rangle_+^{\alpha_1}}$ and $\frac{1}{U^{3-m}}\sim {\langle \xi\rangle_+^{\alpha_3}}$  for $\xi\in {\rm R}$. We get \eqref{h11} and  complete the proof of Lemma \ref{h1}.
\end{proof}

\begin{lem}\label{h2}
Under the same conditions as those in Proposition \ref{prop}, it holds that
\begin{equation}\label{H2}
\left\|\df{\phi_{\xi\xi}(t)}{U}\right\|^2+\int_0^t\int_R \dfrac{\phi_{\xi\xi\xi}^2}{U^{3-m}} \,{\rm d}\xi{\rm d}\tau \leq C\left\|{\df{\phi_{\xi}(0)}{U}}\right\|_{1}^2
+ C\left\|\df{{\phi(0)}}{U^m}\right\|^2,
\end{equation}
for $t\in[0,T]$ provided $\delta\ll 1$. Thus, it holds that
\begin{equation}\label{h21}
\|\phi_{\xi\xi}(t)\|_{\langle \xi\rangle_+^{\alpha_1}}^2+\int_0^t\int_0^t\|\phi_{\xi\xi\xi}(\tau)\|_{\langle \xi\rangle_+^{\alpha_3}}^2\,{\rm d}\tau\leq C\|\phi_\xi(0)\|_{1,\langle \xi\rangle_+^{\alpha_1}}^2+C\|\phi(0)\|_{\langle \xi\rangle_+^{\alpha_2}}^2,
\end{equation}
with $\alpha_1={\frac{2}{1-m}}$, $\alpha_2={\frac{2m}{1-m}}$ and $\alpha_3={\frac{3-m}{1-m}}$ for $t\in[0,T]$ provided $\delta\ll 1$.
\end{lem}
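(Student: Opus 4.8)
The plan is to run the weighted energy estimate of Lemma~\ref{h1} one derivative higher. First I would differentiate the perturbation equation \eqref{Main} once in $\xi$ and rewrite the result, with $\psi:=\phi_\xi$, as a parabolic equation of the same structure as \eqref{Main},
\[
\psi_t+g'(U)\psi_\xi-\Big(\mu m\frac{\psi_\xi}{U^{1-m}}\Big)_\xi=R+F_\xi+G_{\xi\xi},
\]
where $R$ collects the lower–order terms produced by differentiating the diffusion and the convection. By Lemma~\ref{uxi1} the coefficients in $R$ (all involving only $U_\xi$ and $U_{\xi\xi}$) satisfy $|U_\xi|/U^{2-m}\le C$ and $|U_{\xi\xi}|/U^{2-m}\le C$, so $|R|\le C(|\phi_\xi|+|\phi_{\xi\xi}|)$ with bounded coefficients. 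To avoid the singularity of the weight I would set $U_\varepsilon:=U+\varepsilon$ and test this identity against $-\big(\tfrac{\psi_\xi}{U_\varepsilon^2}\big)_\xi=-\big(\tfrac{\phi_{\xi\xi}}{U_\varepsilon^2}\big)_\xi$, exactly as in \eqref{h1s}. Integrating over $\mathbb R\times[0,t]$, the time term yields $\tfrac12\|\phi_{\xi\xi}(t)/U_\varepsilon\|^2-\tfrac12\|\phi_{\xi\xi}(0)/U_\varepsilon\|^2$ and the principal part of the diffusion yields the good dissipation $\int_0^t\!\int\mu m\,\tfrac{\phi_{\xi\xi\xi}^2}{U^{1-m}U_\varepsilon^2}$.

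It then remains to absorb the error terms. The contributions of $R$, of the convection $g'(U)\psi_\xi$ (which after one integration by parts produces $\tfrac12\big(\tfrac{g'(U)}{U_\varepsilon^2}\big)_\xi\phi_{\xi\xi}^2$), and of the first–order pieces of $\big(\tfrac{\phi_{\xi\xi}}{U_\varepsilon^2}\big)_\xi$ are, after Cauchy--Schwarz with a small parameter $\sigma$, bounded by
\[
\sigma\int_0^t\!\int\frac{\phi_{\xi\xi\xi}^2}{U^{1-m}U_\varepsilon^2}+C_\sigma\int_0^t\!\int\Big(\frac{\phi_{\xi\xi}^2}{U^{3-m}}+\frac{\phi_\xi^2}{U^{1+m}}\Big),
\]
using $|U_\xi|\le CU^{2-m}$, $|U_{\xi\xi}|\le CU^{3-2m}$ and the elementary fact that $1+m\le 3-m$ for $0<m<1$, so every weight of order $U^{-(1+m)}$ is dominated by $U^{-(3-m)}$. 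The nonlinear sources are handled by moving one derivative onto the multiplier, so that $\int F_\xi\,\partial_\xi(\phi_{\xi\xi}/U_\varepsilon^2)$ and $\int G_{\xi\xi}\,\partial_\xi(\phi_{\xi\xi}/U_\varepsilon^2)$ appear and only the bounds of Lemmas~\ref{F} and \ref{G} for $F_\xi$ and $G_{\xi\xi}$ are needed; combined with the $L^\infty$-smallness \eqref{inf} of $\phi/U^m$ and $\phi_\xi/U$ these are $\le C\delta$ times the same three space–time integrals. Choosing $\sigma$ and then $\delta$ small absorbs all $\phi_{\xi\xi\xi}$–terms into the dissipation, while the residual integrals $\int_0^t\!\int\phi_{\xi\xi}^2/U^{3-m}$ and $\int_0^t\!\int\phi_\xi^2/U^{1+m}$ are exactly what Lemma~\ref{h1} and Lemma~\ref{l1} bound by $C\|\phi_\xi(0)/U\|^2+C\|\phi(0)/U^m\|^2$. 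Letting $\varepsilon\to0$ and applying Fatou's lemma on the left (so $U^{1-m}U_\varepsilon^2\to U^{3-m}$, $U_\varepsilon\to U$), and noting $\|\phi_{\xi\xi}(0)/U\|\le C\|\phi_\xi(0)/U\|_1$, gives \eqref{H2}. Finally, since $U\sim|\xi|^{-1/(1-m)}$ as $\xi\to+\infty$ and $U\sim1$ as $\xi\to-\infty$ by Theorem~\ref{pro}, one has $U^{-2}\sim\langle\xi\rangle_+^{\alpha_1}$ and $U^{-(3-m)}\sim\langle\xi\rangle_+^{\alpha_3}$, which together with the already–translated Lemmas~\ref{l1} and \ref{h1} turns \eqref{H2} into \eqref{h21}.

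The main obstacle I expect is the bookkeeping of the singular weights in the error terms: one must check that every weight generated by passing derivatives through $U^{1-m}$, $U_\varepsilon^2$ and $g'(U)$ — at worst $U^{-(1+m)}$ on $\phi_\xi^2$ and $\phi_{\xi\xi}^2$, and $U^{-2}$ on $\phi_\xi\phi_{\xi\xi}$ — is still subordinate to the weights $U^{-(3-m)}$ on $\phi_{\xi\xi}^2$ and $U^{-(1+m)}$ on $\phi_\xi^2$ for which space–time control is already available from Lemmas~\ref{h1} and \ref{l1}; this is precisely where the hypothesis $0<m<1$ is used, through $1+m\le 3-m$. A secondary technical point is the legitimacy of the integrations by parts near $\xi=\pm\infty$ in the presence of the singularity of $1/U$, which — as in Lemma~\ref{h1} — is dealt with by first regularizing the weight to $U_\varepsilon$ and passing to the limit only at the very end via Fatou's lemma, introducing the cut-off function of Lemma~\ref{cutoff} as in Lemma~\ref{l1} should boundary terms require it.
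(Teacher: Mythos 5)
Your overall strategy is the same as the paper's (differentiate \eqref{Main} in $\xi$, test with $-\bigl(\phi_{\xi\xi}/U_\varepsilon^2\bigr)_\xi$, control the coefficient terms via Lemma \ref{uxi1}, absorb with small $\sigma$, invoke Lemmas \ref{l1} and \ref{h1} for the residual space--time integrals, and pass $\varepsilon\to0$ by Fatou), but there is one genuine gap: your treatment of the nonlinear source $G_{\xi\xi}$. You claim that the bounds of Lemma \ref{G} ``combined with the $L^\infty$-smallness \eqref{inf} of $\phi/U^m$ and $\phi_\xi/U$'' give $C\delta$ times the three controllable space--time integrals. That is false for the term $\frac{|\phi_{\xi\xi}|^2}{U^{2-m}}$ in $G_{\xi\xi}$: when paired with $\frac{|\phi_{\xi\xi\xi}|}{U_\varepsilon^2}$ (and with $\frac{|\phi_{\xi\xi}||U_\xi|}{U_\varepsilon^3}$) it produces the cubic term
\begin{equation*}
\int_0^t\!\!\int_{\rm R}\frac{|\phi_{\xi\xi\xi}|\,\phi_{\xi\xi}^2}{U_\varepsilon^2\,U^{2-m}}\,{\rm d}\xi\,{\rm d}\tau ,
\end{equation*}
which contains no factor of $\phi$ or $\phi_\xi$ at all, so \eqref{inf} extracts no smallness from it. A crude Cauchy--Schwarz leaves you with $\int\phi_{\xi\xi}^4/(U^{3-m}U_\varepsilon^2)$, whose absorption requires an $L^\infty$ bound on $\phi_{\xi\xi}/U_\varepsilon$ that is \emph{not} part of the a priori assumption (only $\|\phi_{\xi\xi}/U\|_{L^2}^2\le N_1(t)\le\delta$ is available from \eqref{Nt1}). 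Without an additional argument the $\phi_{\xi\xi\xi}$- and $\phi_{\xi\xi}$-contributions of this term cannot be absorbed, and the estimate \eqref{H2} does not close.

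This is precisely the point where the paper inserts an extra interpolation step (the last term of \eqref{G91}, treated in \eqref{g92}): one applies H\"older in $\xi$ to split the term as $\bigl\|\phi_{\xi\xi}/(U_\varepsilon U^{\frac{1-m}{2}})\bigr\|_{L^\infty}\bigl(\int\phi_{\xi\xi\xi}^2/(U^{1-m}U_\varepsilon^2)\bigr)^{1/2}\bigl(\int\phi_{\xi\xi}^2/U^2\bigr)^{1/2}$, uses $\bigl(\int\phi_{\xi\xi}^2/U^2\bigr)^{1/2}\le C\sqrt{\delta}$ from $N_1(t)\le\delta$, then estimates the $L^\infty$ factor by the Sobolev inequality, arriving at a product of the form $\|\cdot\|^{1/2}\,\bigl\|\phi_{\xi\xi\xi}/(U^{\frac{1-m}{2}}U_\varepsilon)\bigr\|^{3/2}$, and finally applies Young's inequality to obtain $C\delta\bigl(\int_0^t\!\int\phi_{\xi\xi\xi}^2/(U^{1-m}U_\varepsilon^2)+\int_0^t\!\int\phi_{\xi\xi}^2/U^{3-m}\bigr)$, which is then absorbed for $\delta$ small. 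You should add this interpolation argument (it is also the reason the quantity $\|\phi_{\xi\xi}/U\|$ must be included in $N_1$); the remaining terms of $G_{\xi\xi}$ and all of $F_\xi$ are handled exactly as you describe. Two minor remarks: the paper does not need the cut-off function here (the regularized weight $U_\varepsilon$ plus the solution class suffice for the integrations by parts), and your reduction $U^{-(1+m)}\le CU^{-(3-m)}$ on $[u_+,u_-]$ is correct since $U\le u_-$ and $2-2m>0$.
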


\begin{proof}

Differentiating \eqref{Main} with respect to $\xi$, multiplying it by $-\left(\dfrac{\phi_{\xi\xi}}{U_\varepsilon^{2}}\right)_\xi$ and integrating the resultant equation over ${\rm R}\times [0,t]$, we obtain
\begin{equation}\label{32}
  \begin{array}{llll}
   & \D\int_R \dfrac{\phi^2_{\xi\xi}(t)}{U^2_\varepsilon}\,{\rm d}\xi+\int_0^t\int_R \dfrac{\phi_{\xi\xi\xi}^2}{U^{1-m}U_\varepsilon^2} \,{\rm d}\xi{\rm d}\tau \\[14pt]
&\D\leq C\left\|\dfrac{\phi_{\xi\xi}(0)}{U}\right\|^2+C\int_0^t\int_R |E_1(U)|\phi_\xi^2 \,{\rm d}\xi{\rm d}\tau
+C\int_0^t\int_R |E_2(U)||\phi_\xi||\phi_{\xi\xi}| \,{\rm d}\xi{\rm d}\tau\\[14pt]
&\,\quad+\D
C\int_0^t\int_R |E_3(U)||\phi_{\xi\xi\xi}||\phi_{\xi\xi}|\ \,{\rm d}\xi{\rm d}\tau
+C\int_0^t\int_R |E_4(U)||\phi_\xi||\phi_{\xi\xi\xi}|\,{\rm d}\xi{\rm d}\tau \\[14pt]
&\quad\,\D+C\int_0^t\int_R\left(-\dfrac{\phi_{\xi\xi}}{U^2_\varepsilon}\right)_\xi F_\xi\,{\rm d}\xi{\rm d}\tau+C\int_0^t\int_R\left(-\dfrac{\phi_{\xi\xi}}{U^2_\varepsilon}\right)_\xi G_{\xi\xi}\,{\rm d}\xi{\rm d}\tau,
  \end{array}
\end{equation}
where
\begin{equation}
  E_1(U):=\dfrac{2g''(U)U_\xi}{U_\varepsilon^2}+\dfrac{4\mu m(1-m)U_\xi^2}{U^{2-m}U^3_\varepsilon},
\end{equation}
\begin{equation}
  E_2(U):=\dfrac{\left(g'''(U)U_\xi^2+g''(U)U_{\xi\xi}\right)}{U_\varepsilon^2}
-\mu m(1-m)\left(\dfrac{(2-m)U_\xi^2}{U^{3-m}}-\dfrac{U_{\xi\xi}}{U^{2-m}}\right)\dfrac{2U_\xi}{U^3_\varepsilon},
\end{equation}
\begin{equation}
  E_3(U):=\dfrac{g'(U)}{U^2_\varepsilon}-\dfrac{2\mu mU_\xi}{U^{1-m}U_\varepsilon^3}-\dfrac{2\mu m(1-m)U_\xi}{U^{2-m}U_\varepsilon^2},
\end{equation}
and
\begin{equation}
  E_4(U):=\dfrac{\mu m(1-m)}{U_\varepsilon^2}\left(\dfrac{(2-m)U_\xi^2}{U^{3-m}}-\dfrac{U_{\xi\xi}}{U^{2-m}}\right).
\end{equation}
From Lemma \ref{uxi1}, we have $|U_\xi|\leq CU^{2-m}$ for $\xi\in {\rm R}$, then we get
\begin{equation}\label{E1}
  |E_1(U)|\leq C\left(\dfrac{1}{U_\varepsilon^m}+\dfrac{1}{U_\varepsilon^{1+m}}\right)\leq \df{C}{U^{1+m}},~\mbox{for}~U\in[u_+,u_-].
\end{equation}
Combining with $|U_{\xi\xi}|\leq CU^{3-2m}$ for all $\xi\in {\rm R}$ and the Cauchy-Schwarz inequality for any $\sigma>0$,   for $U\in[u_+,u_-]$ it similarly holds that
\begin{equation}
  |E_2(U)||\phi_\xi||\phi_{\xi\xi}|\leq C\df{|\phi_\xi||\phi_{\xi\xi}|}{U_\varepsilon^{2m}}\leq C\df{|\phi_\xi||\phi_{\xi\xi}|}{U^{2}} \leq C\left(\df{\phi_\xi^2}{U^{1+m}}+\df{\phi_{\xi\xi}^2}{U^{3-m}}\right),
\end{equation}
\begin{equation}
  |E_3(U)||\phi_{\xi\xi\xi}||\phi_{\xi\xi}|\leq C\df{|\phi_{\xi\xi\xi}||\phi_{\xi\xi}|}{U_\varepsilon^{2}}\leq C\df{|\phi_{\xi\xi\xi}||\phi_{\xi\xi}|}{U_\varepsilon^{3-m}}\leq C\left(\df{1}{\sigma}\df{\phi_{\xi\xi}^2}{U^{3-m}}+{\sigma}\df{\phi_{\xi\xi\xi}^2}{U^{1-m}U_\varepsilon^{2}}\right),
\end{equation}
and
\begin{equation}\label{E4}
  |E_4(U)||\phi_{\xi\xi\xi}||\phi_{\xi}|\leq C\df{|\phi_{\xi\xi\xi}||\phi_{\xi}|}{U_\varepsilon^{1+m}}\leq C\df{|\phi_{\xi\xi\xi}||\phi_{\xi}|}{U_\varepsilon^{2}}\leq C\left(\df{1}{\sigma}\df{\phi_{\xi}^2}{U^{1+m}}+{\sigma}\df{\phi_{\xi\xi\xi}^2}{U^{1-m}U_\varepsilon^{2}}\right).
\end{equation}

By virtue of Lemma \ref{uxi1} and Lemma \ref{F}, we have
\begin{equation}
  \begin{array}{llll}
&\D\int_0^t\int_R
\left(-\dfrac{\phi_{\xi\xi}}{U_\varepsilon^2}\right)_\xi F_\xi
\;{\rm d}\xi{\rm d}\tau\\[14pt]
&\D\leq C\int_0^t\int_R \left(\dfrac{|\phi_{\xi\xi\xi}|}{U^2_\varepsilon}+\dfrac{|\phi_{\xi\xi}|}{U_\varepsilon^{1+m}}\right)\left(U^{2-m}|\phi_\xi|^2+|\phi_\xi||\phi_{\xi\xi}|\right) \;{\rm d}\xi{\rm d}\tau\\[14pt]
&\D\leq C  \int_0^t \left\|\df{\phi_\xi}{U}\right\|_{L^\infty} \int_R
{U^{1-m}|\phi_{\xi\xi\xi}||\phi_\xi|}+{U^{2-2m}|\phi_{\xi\xi}||\phi_\xi|}
+\dfrac{|\phi_{\xi\xi\xi}||\phi_{\xi\xi}|}{U_\varepsilon}+\dfrac{\phi_{\xi\xi}^2}{U^{m}_\varepsilon}
\;{\rm d}\xi{\rm d}\tau\\[14pt]
&\D\leq C\delta\int_0^t\int_R\df{\phi_{\xi\xi\xi}^2}{U^{1-m}U_\varepsilon^{2}}+\left(\df{1}{\sigma}+1\right)\df{\phi_{\xi\xi}^2}{U^{3-m}}+\left(\df{1}{\sigma}+1\right)\df{\phi_{\xi}^2}{U^{1+m}}\,{\rm d}\xi{\rm d}\tau.
    \end{array}
\end{equation}
By virtue of Lemma \ref{uxi1} and Lemma \ref{G}, we have
\begin{equation}\label{G91}
  \begin{array}{lllllll}
    &\D\int_0^t\int_R
\left(-\dfrac{\phi_{\xi\xi}}{U_\varepsilon^2}\right)_{\xi}G_{\xi\xi}
\;{\rm d}\xi{\rm d}\tau\\[14pt]
%&\D\leq C\int_0^t\int_R \left(\dfrac{|\phi_{\xi\xi\xi}|}{U^2_\varepsilon}+\dfrac{|\phi_{\xi\xi}||U_{\xi}|}{U_\varepsilon^{3}}\right)\left\{ \dfrac{|U_{\xi\xi}||\phi_\xi|^2}{U^{3-m}}+\dfrac{|\phi_{\xi\xi\xi}||\phi_\xi|}{U^{2-m}}+\dfrac{|U_\xi|^2|\phi_\xi|^2}{U^{4-m}}+\dfrac{|U_\xi||\phi_{\xi\xi}||\phi_\xi|}{U^{3-m}}+\dfrac{|\phi_{\xi\xi}|^2}{U^{2-m}}\right\} \;{\rm d}\xi{\rm d}\tau\\[14pt]
&\D\leq C\int_0^t\left\|\df{\phi_\xi}{U}\right\|_{L^\infty}\int_R
\dfrac{\phi_{\xi\xi\xi}^2}{ U^{1-m}U^2_\varepsilon}+\dfrac{|\phi_{\xi\xi\xi}||\phi_{\xi\xi}|}{U^2_\varepsilon}
+\dfrac{|\phi_{\xi\xi\xi}||\phi_{\xi}|}{U^{1+m}_\varepsilon}+\dfrac{|\phi_{\xi\xi}||\phi_{\xi}|}{U^{2m}_\varepsilon}+\dfrac{\phi_{\xi\xi}^2}{U^{1+m}_\varepsilon}
\;{\rm d}\xi{\rm d}\tau\\[14pt]
&\D\quad\;+C\int_0^t\int_R \left(\dfrac{|\phi_{\xi\xi\xi}|}{U^2_\varepsilon}+\dfrac{|\phi_{\xi\xi}||U_{\xi}|}{U_\varepsilon^{3}}\right)\dfrac{\phi_{\xi\xi}^2}{U^{2-m}}\;{\rm d}\xi{\rm d}\tau\\[14pt]
&\D\leq C\delta\int_0^t\int_R(\sigma+1)\df{\phi_{\xi\xi\xi}^2}{U^{1-m}U_\varepsilon^{2}}+\left(\df{1}{\sigma}+1\right)\df{\phi_{\xi\xi}^2}{U^{3-m}}+\df{1}{\sigma}\df{\phi_{\xi}^2}{U^{1+m}}\,{\rm d}\xi{\rm d}\tau\\[14pt]
&\D\quad\;+C\int_0^t\int_R \left(\dfrac{|\phi_{\xi\xi\xi}|}{U^2_\varepsilon}+\dfrac{|\phi_{\xi\xi}||U_{\xi}|}{U_\varepsilon^{3}}\right)\dfrac{\phi_{\xi\xi}^2}{U^{2-m}}\;{\rm d}\xi{\rm d}\tau.
  \end{array}
\end{equation}
As for the last term of \eqref{G91}, by using H\"{o}lder's inequality and Sobolev inequality and noting $\left\| \frac{\phi_{\xi\xi}(t)}{U}\right\|^2\leq CN_1(t)\leq C\delta$ , we have
\begin{equation}\label{g92}
\begin{array}{llll}
 &\D\int_0^t\int_R \dfrac{|\phi_{\xi\xi\xi}|}{U^2_\varepsilon }\df{\phi_{\xi\xi}^2}{U^{2-m}}\;{\rm d}\xi{\rm d}\tau\\[14pt]
 &\D\leq C\int_0^t \left\|\df{\phi_{\xi\xi}}{U_\varepsilon U^{\frac{1-m}{2}}}\right\|_{L^\infty}\int_R\df{|\phi_{\xi\xi\xi}||\phi_{\xi\xi}|}{U_\varepsilon U^\frac{3-m}{2}} \;{\rm d}\xi{\rm d}\tau\\[14pt]
 &\D\leq C\int_0^t \left\|\df{\phi_{\xi\xi}}{U_\varepsilon U^{\frac{1-m}{2}}}\right\|_{L^\infty} \left(\int_R\df{|\phi_{\xi\xi\xi}|^2}{U^{1-m}U_\varepsilon^2} \;{\rm d}\xi \right)^\frac{1}{2}
 \left(\int_R\df{\phi_{\xi\xi}^2}{U^2} \;{\rm d}\xi \right)^\frac{1}{2} {\rm d}\tau \\[14pt]
 &\D\leq C\delta \int_0^t \left\|\df{\phi_{\xi\xi}}{U_\varepsilon U^{\frac{1-m}{2}}}\right\|_{L^\infty} \left(\int_R\df{|\phi^2_{\xi\xi\xi}|}{U^{1-m}U_\varepsilon^2} \;{\rm d}\xi \right)^\frac{1}{2} {\rm d}\tau\\[14pt]
  &\D\leq C\delta \int_0^t  \left\|\df{\phi_{\xi\xi}}{U_\varepsilon U^{\frac{1-m}{2}}}\right\|^\frac{1}{2}\left\|\df{\phi_{\xi\xi\xi}}{U^{\frac{1-m}{2}}U_\varepsilon}\right\|^\frac{3}{2} {\rm d}\tau\\[14pt]
 &\D\leq C\delta \left(\int_0^t\int_R\df{\phi^2_{\xi\xi\xi}}{U^{1-m}U_\varepsilon^2} \;{\rm d}\xi {\rm d}\tau+\int_0^t\int_R \df{\phi_{\xi\xi}^2}{U^{3-m}}  \;{\rm d}\xi{\rm d}\tau\right).
  \end{array}
    \end{equation}
    The term $\int_0^t\int_R \frac{|\phi_{\xi\xi}||U_{\xi}|}{U_\varepsilon^{3}}\frac{\phi_{\xi\xi}^2}{U^{2-m}}\;{\rm d}\xi{\rm d}\tau$ can be similarly estimated.
 Substituting \eqref{E1}-\eqref{g92} into \eqref{32}, taking $\delta$ and $\sigma$ appropriately small and using Fatou's lemma, Lemma \ref{l1} and Lemma \ref{h1}, we obtain \eqref{H2}.

 By virtue of $\frac{1}{U^{2}}\sim {\langle \xi\rangle_+^{\alpha_1}}$ and $\frac{1}{U^{3-m}}\sim {\langle \xi\rangle_+^{\alpha_3}}$  for $\xi\in {\rm R}$, we get \eqref{h21} and  complete the proof of Lemma \ref{h2}.
\end{proof}

From Lemmas \ref{l1}-\ref{h2}, when we choose $\|\phi_\xi(t)\|_{1,\langle \xi\rangle_+^{\alpha_1}}^2+\|\phi(t)\|_{\langle \xi\rangle_+^{\alpha_2}}^2\leq \delta_0 \leq\frac{\delta}{C} $, we obtain \eqref{final}.  Now we prove Theorem \ref{ph1} based on Proposition \ref{prop}.

\begin{proof}[Proof of Theorem \ref{ph1}]

  Multiplying \eqref{Main} by $-\phi_{\xi\xi}$, we have
  \begin{equation}
   \df{1}{2}(\phi_\xi)^2_t=g'(U)\phi_\xi\phi_{\xi\xi}- \left(\mu m\dfrac{\phi_\xi}{U^{1-m}}\right)_\xi\phi_{\xi\xi}-F\phi_{\xi\xi}-\phi_{\xi\xi}G_\xi.
  \end{equation}
  Integrating the above equation with respect to $\xi $ over $(-\infty,+\infty)$ yields
  \begin{equation}
    \left|\df{{\rm d}}{{\rm d}t}(\|\phi_\xi\|^2)\right|\leq C\left(\|\phi_\xi\|^2+\left\|\df{\phi_{\xi\xi}}{U^{\frac{1-m}{2}}}\right\|^2\right)\leq C\left(\left\|\df{\phi_\xi}{U^{\frac{1+m}{2}}}\right\|^2+\left\|\df{\phi_{\xi\xi}}{U^{\frac{3-m}{2}}}\right\|^2\right).
  \end{equation}
  Considering \eqref{L1}, \eqref{H11} and \eqref{H2}, we get
  \begin{equation}
    \int_0^{+\infty}\left|\df{{\rm d}}{{\rm d}t}(\|\phi_\xi\|^2)\right| {\rm d} t\leq C \int_0^{+\infty}\left(\left\|\df{\phi_\xi}{U^{\frac{1+m}{2}}}\right\|^2+\left\|\df{\phi_{\xi\xi}}{U^{\frac{3-m}{2}}}\right\|^2\right){\rm d} t\leq C,
 \end{equation}
which implies that
\begin{equation}
  \|\phi_\xi(t)\|\rightarrow 0,~as~t\rightarrow +\infty.
\end{equation}
On the other hand, $\|\phi_{\xi\xi}\|$ is uniformly bounded in $t\geq 0$ due to \eqref{final}, then using Sobolev inequality, we obtain
\begin{equation}
   \phi^2_\xi(\xi,t)=2\int^\xi_{-\infty}\phi_x\phi_{xx}(x,t) {\rm d} x\leq 2\|\phi_{\xi}(t)\|\|\phi_{\xi\xi}(t)\|\rightarrow0,~as~t\rightarrow +\infty.
 \end{equation}
Thus, the proof is complete.
\end{proof}

\section{${A~priori}$ estimates for the case of  $f'(u_+)=s<f'(u_-)$}

In what follows, we confine ourselves to the proof of Proposition \ref{prop1} by establishing the $a~priori$~estimates for $f'(u_+)=s<f'(u_-)$.
Let $\phi(\xi,t)\in Y(0,T) $ be the solution of \eqref{Main}-\eqref{G1} with
\begin{equation}\label{pri2}
  N_2(t)\leq \delta,~\mbox{for}~t\in[0,T],
\end{equation}
which implies that
\begin{equation}\label{inf2}
  \sup_{t\in[0,T]}\left(\left\|{\phi(t)}\right\|_{L^\infty} +\left\|\df{\phi_\xi(t)}{U}\right\|_{L^\infty}\right)\leq C\delta.
\end{equation}

\begin{lem}\label{uxi2}
  In the case of $f'(u_+)=s<f'(u_-)$, it holds that
  \begin{equation}
    |U_\xi|\leq CU^{k_++2-m},
  \end{equation}
  and
  \begin{equation}
    |U_{\xi\xi}|\leq CU^{2k_++3-2m},
  \end{equation}
  for all $\xi\in{\rm R}$.
\end{lem}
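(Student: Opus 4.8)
The plan is to read off both estimates directly from the decay description of $U$ established in Theorem~\ref{pro}, exactly as was done for Lemma~\ref{uxi1} in the non-degenerate case, and then to verify the (finitely many) exponents match. First I would recall from \eqref{211} that in the degenerate case $s=f'(u_+)$ with \eqref{1-11-1} we have $|U_\xi|\sim |U|^{k_++2-m}$ as $\xi\to+\infty$, and from the ODE \eqref{Hu} directly $U_\xi=h(U)=\tfrac{U^{1-m}}{\mu m}g(U)$ on all of $\mathbb R$. So $|U_\xi|\le C U^{k_++2-m}$ reduces to showing $|g(U)|\le C U^{k_++1}$ uniformly for $U\in[u_+,u_-]=[0,u_-]$. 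Since $g$ is smooth, $g(u_+)=0$, and the degenerate entropy condition $s=f'(u_+)$ together with \eqref{1-11-1} forces $g'(u_+)=f'(u_+)-s=0$, $g''(u_+)=f''(u_+)=0,\dots,g^{(k_+)}(u_+)=f^{(k_+)}(u_+)=0$, while $g^{(k_++1)}(u_+)=f^{(k_++1)}(u_+)\neq 0$, Taylor's theorem with remainder at $u_+=0$ gives $|g(U)|\le C|U-u_+|^{k_++1}=CU^{k_++1}$ on the compact interval $[0,u_-]$. Multiplying by $\tfrac{U^{1-m}}{\mu m}$ yields $|U_\xi|\le CU^{k_++2-m}$, which also absorbs the exponential decay near $\xi\to-\infty$ since there $U$ is bounded away from $0$ and $|U_\xi|$ is exponentially small.

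For the second derivative I would differentiate \eqref{Hu} once more: from $U_\xi=h(U)$ we get $U_{\xi\xi}=h'(U)U_\xi=h'(U)h(U)$, where $h(U)=\tfrac{U^{1-m}}{\mu m}g(U)$ and hence $h'(U)=\tfrac{1}{\mu m}\big((1-m)U^{-m}g(U)+U^{1-m}g'(U)\big)$. Using the Taylor bounds $|g(U)|\le CU^{k_++1}$ and $|g'(U)|\le CU^{k_+}$ (again from $g'(u_+)=\cdots=g^{(k_+)}(u_+)=0$, $g^{(k_++1)}(u_+)\neq0$), one finds $|h'(U)|\le C\big(U^{-m}U^{k_++1}+U^{1-m}U^{k_+}\big)=CU^{k_++1-m}$. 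Therefore $|U_{\xi\xi}|=|h'(U)||h(U)|\le C\,U^{k_++1-m}\cdot U^{k_++2-m}=CU^{2k_++3-2m}$, valid on all of $\mathbb R$ — near $\xi=-\infty$ both factors are bounded and exponentially decaying, so the bound holds trivially there as well.

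The proof is essentially bookkeeping, so I do not expect a genuine obstacle; the only point requiring a little care is making sure the chain of vanishing derivatives $g^{(j)}(u_+)=0$ for $2\le j\le k_+$ really follows from hypothesis \eqref{1-11-1} on $f$ together with the degenerate condition $s=f'(u_+)$ — which it does, since $g(u)=-s(u-u_+)+f(u)-f(u_+)$ implies $g'=f'-s$ and $g^{(j)}=f^{(j)}$ for $j\ge2$. One should also note that all the $\sim$ and $\le$ constants may be taken uniform because $[u_+,u_-]$ is compact and $U$ is monotone with the stated far-field asymptotics, so there is no issue patching the behaviour at $+\infty$, $-\infty$, and on compact $\xi$-intervals. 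Accordingly the lemma follows, and I would simply write: ``This is a direct corollary of Theorem~\ref{pro} and the identities $U_\xi=h(U)$, $U_{\xi\xi}=h'(U)h(U)$ together with the Taylor estimates $|g(U)|\le CU^{k_++1}$, $|g'(U)|\le CU^{k_+}$ on $[u_+,u_-]$.''
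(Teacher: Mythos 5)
Your proposal is correct and follows essentially the same route as the paper, whose proof is the one-line remark that the lemma is a direct corollary of Theorem \ref{pro} for $s=f'(u_+)$: your Taylor expansion of $g$ at $u_+$ together with the identities $U_\xi=h(U)$ and $U_{\xi\xi}=h'(U)h(U)$ is exactly the computation underlying that assertion, and your exponents check out. The only added value is that you make explicit the $U_{\xi\xi}$ bound, which the paper leaves implicit in its citation of Theorem \ref{pro}.
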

\begin{proof}
  This lemma is a direct corollary of Theorem \ref{pro} for $s=f'(u_+)$.
\end{proof}

\begin{lem}\label{F2}
Under the $a~priori$ assumption \eqref{pri2}, it holds that
\begin{equation}
|F|\leq C\phi_\xi^2,
\end{equation}
and
\begin{equation}
|F_\xi|\leq C(U^{k_++2-m}|\phi_\xi|^2+|\phi_\xi||\phi_{\xi\xi}|).
 \end{equation}

\end{lem}

\begin{proof}
The proof is similar to the proof of Lemma \ref{F} in  the previous section, with the representation of $F=-\{f(U+\phi_\xi)-f(U)-f'(U)\phi_\xi\}$, Taylor's expansion and Lemma \ref{uxi2}. We skip the details.
\end{proof}

\begin{lem}\label{G2}
Under the $a~priori$ assumption \eqref{pri2}, it holds that
\begin{equation}
|G|\leq \dfrac{C\phi_\xi^2}{U^{2-m}},
\end{equation}
\begin{equation}
|G_\xi|\leq
\left\{U^{k_+-1}{|\phi_\xi|^2}+\dfrac{|\phi_\xi||\phi_{\xi\xi}|}{U^{2-m}}\right\},
\end{equation}
and
\begin{equation}
\begin{split}
|G_{\xi\xi}|\leq C
\left\{\dfrac{|\phi_{\xi\xi\xi}||\phi_\xi|}{U^{2-m}}+U^{2k_+-m}|\phi_\xi|^2+U^{k_+-1}{|\phi_{\xi\xi}||\phi_\xi|}+\dfrac{|\phi_{\xi\xi}|^2}{U^{2-m}}\right\}.
\end{split}
 \end{equation}
\end{lem}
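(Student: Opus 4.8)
The plan is to estimate $G$, $G_\xi$ and $G_{\xi\xi}$ exactly as in the non-degenerate case (Lemma \ref{G}), namely by writing
\[
G=\mu\{(U+\phi_\xi)^m-U^m-m\tfrac{\phi_\xi}{U^{1-m}}\}
\]
and differentiating, but now substituting the sharper decay bounds $|U_\xi|\le CU^{k_++2-m}$ and $|U_{\xi\xi}|\le CU^{2k_++3-2m}$ from Lemma \ref{uxi2} in place of the $k_+=1$ bounds. First I would expand $G$ by Taylor's theorem to second order in $\phi_\xi$: since the zeroth and first order terms cancel by construction, $G=\mu\binom{m}{2}\theta^{m-2}\phi_\xi^2$ for some $\theta$ between $U$ and $U+\phi_\xi$; using the a priori bound \eqref{inf2}, which gives $|\phi_\xi|\le C\delta\, U$ so that $\theta\sim U$, one gets $|G|\le C\phi_\xi^2/U^{2-m}$. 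This part is identical to Lemma \ref{G} and uses none of the decay of $U$.

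For $G_\xi$ I would differentiate the representation
\[
G_\xi=\mu m\Big\{U_\xi\big(\tfrac{1}{(U+\phi_\xi)^{1-m}}-\tfrac{1}{U^{1-m}}+(1-m)\tfrac{\phi_\xi}{U^{2-m}}\big)+\phi_{\xi\xi}\big(\tfrac{1}{(U+\phi_\xi)^{1-m}}-\tfrac{1}{U^{1-m}}\big)\Big\},
\]
bound the first bracket by $C|\phi_\xi|^2/U^{3-m}$ (a second-order Taylor remainder of $t\mapsto t^{m-1}$) and the second by $C|\phi_\xi|/U^{2-m}$ (first-order remainder), again using $\theta\sim U$ from \eqref{inf2}. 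Then the first term is $\le C|U_\xi||\phi_\xi|^2/U^{3-m}\le C U^{k_++2-m}|\phi_\xi|^2/U^{3-m}=CU^{k_+-1}|\phi_\xi|^2$, which is precisely where the improved power $k_+$ enters and replaces the bare $|\phi_\xi|^2/U$ of Lemma \ref{G}; the second term is $C|\phi_\xi||\phi_{\xi\xi}|/U^{2-m}$ unchanged. The computation for $G_{\xi\xi}$ is the same but one level deeper: differentiating $G_\xi$ produces the five groups already displayed in the proof of Lemma \ref{G}, and I would bound each using $|U_\xi|\le CU^{k_++2-m}$, $|U_{\xi\xi}|\le CU^{2k_++3-2m}$, and $\theta\sim U$, so that $|U_{\xi\xi}||\phi_\xi|^2/U^{3-m}\le CU^{2k_+-m}|\phi_\xi|^2$ and $|U_\xi||\phi_{\xi\xi}||\phi_\xi|/U^{3-m}\le CU^{k_+-1}|\phi_{\xi\xi}||\phi_\xi|$, while the $|U_\xi|^2|\phi_\xi|^2/U^{4-m}$ term becomes $CU^{2k_+-m}|\phi_\xi|^2$ and is absorbed into the same group, and the terms with $\phi_{\xi\xi\xi}$ and $\phi_{\xi\xi}^2$ keep their $1/U^{2-m}$ weights.

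There is essentially no hard step here: the lemma is a routine transcription of Lemma \ref{G} with $k_+$-decay inserted, and I would in fact just write ``the proof parallels that of Lemma \ref{G}, using Lemma \ref{uxi2} and \eqref{inf2}'' and exhibit only the one or two places where the exponent changes. The only point requiring a word of care is that the bound $\theta\sim U$ (hence that dividing by $(U+\phi_\xi)^{1-m}$ is comparable to dividing by $U^{1-m}$) genuinely relies on $\|\phi_\xi/U\|_{L^\infty}\le C\delta$ with $\delta$ small, which is \eqref{inf2}; without it the Taylor remainders could not be controlled near $\xi\to+\infty$. I expect that to be the ``main obstacle'' only in the sense that it must be invoked, not proved.

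\begin{proof}
The proof parallels that of Lemma \ref{G}. Starting from $G=\mu\{(U+\phi_\xi)^m-U^m-m\tfrac{\phi_\xi}{U^{1-m}}\}$, Taylor's expansion in $\phi_\xi$ and the a priori bound \eqref{inf2} (which gives $|\phi_\xi|\le C\delta\,U$, so that $U+\phi_\xi\sim U$) yield at once $|G|\le C\phi_\xi^2/U^{2-m}$. Differentiating,
\begin{equation}
\begin{split}
|G_\xi|&=\mu m\Big|U_\xi\Big(\tfrac{1}{(U+\phi_\xi)^{1-m}}-\tfrac{1}{U^{1-m}}+(1-m)\tfrac{\phi_\xi}{U^{2-m}}\Big)+\phi_{\xi\xi}\Big(\tfrac{1}{(U+\phi_\xi)^{1-m}}-\tfrac{1}{U^{1-m}}\Big)\Big|\\
&\leq C\Big(\dfrac{|U_\xi||\phi_\xi|^2}{U^{3-m}}+\dfrac{|\phi_{\xi\xi}||\phi_\xi|}{U^{2-m}}\Big),
\end{split}
\end{equation}
and invoking $|U_\xi|\le CU^{k_++2-m}$ from Lemma \ref{uxi2} gives $|G_\xi|\le C\{U^{k_+-1}|\phi_\xi|^2+|\phi_\xi||\phi_{\xi\xi}|/U^{2-m}\}$. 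Differentiating once more and again using \eqref{inf2} to replace $(U+\phi_\xi)^{-j+m}$ by $U^{-j+m}$ up to constants, one obtains, exactly as in Lemma \ref{G},
\begin{equation}
|G_{\xi\xi}|\leq C\Big\{\dfrac{|U_{\xi\xi}||\phi_\xi|^2}{U^{3-m}}+\dfrac{|\phi_{\xi\xi\xi}||\phi_\xi|}{U^{2-m}}+\dfrac{|U_\xi|^2|\phi_\xi|^2}{U^{4-m}}+\dfrac{|U_\xi||\phi_{\xi\xi}||\phi_\xi|}{U^{3-m}}+\dfrac{|\phi_{\xi\xi}|^2}{U^{2-m}}\Big\}.
\end{equation}
Using $|U_{\xi\xi}|\le CU^{2k_++3-2m}$ and $|U_\xi|\le CU^{k_++2-m}$ from Lemma \ref{uxi2}, the first and third terms are each bounded by $CU^{2k_+-m}|\phi_\xi|^2$, the fourth by $CU^{k_+-1}|\phi_{\xi\xi}||\phi_\xi|$, and the remaining two are unchanged. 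This proves the claimed bound for $G_{\xi\xi}$ and completes the proof.
\end{proof}
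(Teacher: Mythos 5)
Your proposal is correct and follows essentially the same route as the paper: the paper likewise reuses the intermediate bounds from Lemma \ref{G} (same representation of $G$, $G_\xi$, $G_{\xi\xi}$ and Taylor remainders controlled via \eqref{inf2}) and then simply inserts the degenerate decay estimates $|U_\xi|\le CU^{k_++2-m}$, $|U_{\xi\xi}|\le CU^{2k_++3-2m}$ from Lemma \ref{uxi2}, exactly as you do, with matching exponents $U^{k_+-1}$ and $U^{2k_+-m}$.
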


\begin{proof}
From the representation of $G=\mu\{(U+\phi_\xi)^m-U^m- m\dfrac{\phi_\xi}{U^{1-m}}\}$, we similarly have
\begin{equation}
 \left| G_\xi\right|\leq C \left\{\df{|U_\xi||\phi_{\xi}|^2}{U^{3-m}}+\df{|\phi_{\xi\xi}||\phi_\xi|}{U^{2-m}}\right\},
\end{equation}
and
 \begin{equation}
\left|G_{\xi\xi}\right|
\leq C\left\{ \dfrac{|U_{\xi\xi}||\phi_\xi|^2}{U^{3-m}}+\dfrac{|\phi_{\xi\xi\xi}||\phi_\xi|}{U^{2-m}}
+\dfrac{|U_\xi|^2|\phi_\xi|^2}{U^{4-m}}+\dfrac{|U_\xi||\phi_{\xi\xi}||\phi_\xi|}{U^{3-m}}+\dfrac{|\phi_{\xi\xi}|^2}{U^{2-m}}\right\}.
 \end{equation}
By virtue of \eqref{inf2} and Theorem \ref{uxi2}, the proof of Lemma \ref{G2} is completed.
\end{proof}

\begin{lem}\label{le2}%$k_+\leq 2m$ $m\in(0.5,1)"{/$
Under the same conditions as in Proposition \ref{prop1}, it holds that
\begin{equation}\label{Le2}
\|\phi(t)\|_{w}^2+\int_0^t\int_R\dfrac{w\phi_\xi^2}{U^{1-m}}\,{\rm d}\xi{\rm d}\tau\leq C\|\phi(0)\|_{w}^2,
\end{equation}
for $t\in[0,T]$ provided $\delta\ll 1$. Thus it holds that
\begin{equation}\label{Lde}
\|\phi(t)\|_{\langle \xi\rangle_+^{\beta_2}}^2+\int_0^t\int_0^t\|\phi_\xi(\tau)\|_{\langle \xi\rangle_+}^2\,{\rm d}\tau\leq C\|\phi(0)\|_{\langle \xi\rangle_+^{\beta_2}}^2,
\end{equation}
with $\beta_2={\frac{k_+}{k_++1-m}}$ for $t\in[0,T]$ provided $\delta\ll 1$.
\end{lem}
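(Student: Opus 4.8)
The plan is to mimic the structure of the proof of Lemma \ref{l1} from the non-degenerate case, but with the weight $\frac{1}{U^{2m}}$ replaced by the weight $\frac{w(U)}{U}\cdot\frac{1}{U^{?}}$ dictated by $w = \frac{U(U-u_-)}{g(U)}$. Concretely, I would multiply the perturbation equation \eqref{Main} by $\frac{w(U)}{U}\phi$ (equivalently, by a suitable multiple of $\frac{\phi(U-u_-)}{g(U)}$), integrate in $\xi$, and look for a good-sign structure. The key algebraic point is that $w$ has been \emph{designed} so that the transport term $g'(U)\phi_\xi$ and the diffusion term $\big(\mu m\frac{\phi_\xi}{U^{1-m}}\big)_\xi$ combine, after integration by parts, into a total $\xi$-derivative plus a manifestly nonnegative contribution; this is exactly the role played by the convexity of $K(u)=g(u)/u^{2m}$ in Lemma \ref{l1}, and here the degeneracy condition $s=f'(u_+)$ (so that $g(U)$ vanishes to high order at $u_+$) is what makes the choice $w=\frac{U(U-u_-)}{g(U)}$ bounded and positive on $[u_+,u_-]$, removing the need for an extra convexity hypothesis. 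I expect the identity to take the schematic form
\begin{equation}
\Big(\tfrac12 w\phi^2\Big)_t + (\cdots)_\xi + c_0\,\frac{w\,\phi_\xi^2}{U^{1-m}} = F\cdot(\text{wt})\,\phi + (\text{wt})\,\phi\,G_\xi + (\text{lower order})\,,
\end{equation}
with $c_0>0$, the boundary/flux term $(\cdots)_\xi$ again carrying singularities at $\xi=\pm\infty$.

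As in Lemma \ref{l1}, the next step is to handle those far-field singularities with the cut-off function $\eta$ of Lemma \ref{cutoff}: multiply the pointwise identity by $\eta$, integrate over $\mathrm{R}$, and show that all the terms generated by $\eta_\xi$ are bounded by $\frac{C(\delta+\delta^2)}{L}$, using the local $L^2$ bounds furnished by $N_2(t)\le\delta$ together with the asymptotics of $U$ from Theorem \ref{pro}. Then send $L\to+\infty$ and integrate in $t$. The remaining task is to absorb the right-hand side: using \eqref{inf2}, Lemma \ref{F2}, Lemma \ref{G2} and Cauchy--Schwarz, each nonlinear term should be estimated by $C\delta\int_0^t\int_R \frac{w\phi_\xi^2}{U^{1-m}}\,\mathrm{d}\xi\,\mathrm{d}\tau$ (the terms involving $G_\xi$ will contribute factors like $\frac{|\phi_\xi|}{U}$ or $\|\phi\|_{L^\infty}$, both $O(\delta)$), so that for $\delta\ll 1$ they are swallowed by the left-hand dissipation, yielding \eqref{Le2}.

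Finally, to pass from \eqref{Le2} to \eqref{Lde}, I would use the decay rates \eqref{211} of the degenerate shock: $U(\xi)\sim|\xi|^{-1/(k_++1-m)}$ as $\xi\to+\infty$ and $U(\xi)\sim 1$ as $\xi\to-\infty$. Since $g(U)\sim (U-u_+)^{k_++1}$ near $u_+$, one has $w(U)=\frac{U(U-u_-)}{g(U)}\sim U^{-k_+}$ near $u_+$ and $w(U)\sim 1$ near $u_-$, hence $w\sim \langle\xi\rangle_+^{\beta_2}$ with $\beta_2=\frac{k_+}{k_++1-m}$; likewise $\frac{w}{U^{1-m}}\sim U^{-(k_++1-m)}\sim\langle\xi\rangle_+$, which gives the dissipation norm $\|\phi_\xi\|_{\langle\xi\rangle_+}^2$ in \eqref{Lde}.

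The main obstacle I anticipate is the first step: verifying that $w=\frac{U(U-u_-)}{g(U)}$ really does produce a nonnegative zeroth-order term (the analogue of $-\tfrac12 K''(U)U_\xi\phi^2$) \emph{without} any convexity assumption, i.e. checking that the ``bad'' commutator terms coming from differentiating $\frac{w}{U^{1-m}}$ against $\phi_\xi^2$ have a favorable sign or can be dominated by the principal dissipation $\frac{w\phi_\xi^2}{U^{1-m}}$ using only $|U_\xi|\le CU^{k_++2-m}$ from Lemma \ref{uxi2}. Getting the exact form of this identity — and confirming that the degeneracy order $k_+$ enters with the right powers so that every weight stays bounded on $[u_+,u_-]$ — is the crux; once the identity is in hand, the cut-off argument and the nonlinear absorption are routine adaptations of Lemma \ref{l1}.
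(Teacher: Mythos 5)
Your overall scheme --- multiply \eqref{Main} by a $w$-weighted multiple of $\phi$, cut off with $\eta$, absorb the nonlinear terms using \eqref{inf2} and Lemmas \ref{F2}--\ref{G2}, then translate $w\sim U^{-k_+}$ and $w/U^{1-m}\sim\langle\xi\rangle_+$ into \eqref{Lde} --- is the paper's route, and your final asymptotic step is correct. But the step you yourself flag as the crux is left unresolved, and it is precisely where the lemma lives. The correct multiplier is $w(U)\phi$ (not $\tfrac{w(U)}{U}\phi$, which would produce the energy $\|\phi\|_{w/U}^2$ rather than $\|\phi\|_{w}^2$; your schematic identity implicitly assumes $w\phi$ anyway), and the resulting pointwise identity reads
\begin{equation*}
\Bigl(\tfrac{w}{2}\phi^2\Bigr)_t+(\cdots)_\xi-(wg)''(U)\,U_\xi\,\tfrac{\phi^2}{2}
+\Bigl(\tfrac{\mu m\, w'(U)U_\xi}{U^{1-m}}-(w'g)(U)\Bigr)\phi\phi_\xi
+\mu m\,\tfrac{w\,\phi_\xi^2}{U^{1-m}}
=Fw\phi-w'(U)U_\xi G\phi-w(U)G\phi_\xi .
\end{equation*}
The point you were missing is that nothing here needs to be ``dominated by the principal dissipation'' or shown to have a favorable sign by estimates: the cross term vanishes \emph{identically}, because the profile equation \eqref{Hu} gives $\mu m U_\xi/U^{1-m}=g(U)$, hence $\mu m w'U_\xi/U^{1-m}=(w'g)(U)$; and since $wg=U(U-u_-)$ is exactly a quadratic polynomial, $(wg)''\equiv 2$, so with $U_\xi<0$ the zeroth-order term equals $|U_\xi|\phi^2\ge 0$. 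These two exact algebraic identities are what replace the convexity hypothesis \eqref{M} in the degenerate case; without stating them, your plan has no argument at its central step.

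Two further corrections. First, your claim that the degeneracy makes $w$ ``bounded'' on $[u_+,u_-]$ is wrong: since $g(U)\sim U^{k_++1}$ near $u_+$, one has $w\sim U^{-k_+}\to\infty$ as $U\to u_+$ (it is in the non-degenerate case that $w$ stays bounded). The degeneracy makes the weight \emph{stronger}, which is exactly why $\|\phi\|_w\sim\|\phi\|_{\langle\xi\rangle_+^{\beta_2}}$; your later asymptotic computation is the correct one and contradicts the earlier remark. Second, in the absorption step the term $-w'(U)U_\xi G\phi$ is not controlled merely by $\|\phi\|_{L^\infty}\le C\delta$: one must also check that $\bigl|\tfrac{w'(U)}{w(U)}\bigr|\bigl|\tfrac{U_\xi}{U}\bigr|=O(1)$ uniformly on $[u_+,u_-]$, using the behaviour of $g$ and $U_\xi$ at both ends (this is the paper's estimate \eqref{wg}); only then does this term reduce to $C\delta\int\!\!\int w\phi_\xi^2/U^{1-m}$. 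With the identity above and this bound supplied, the rest of your outline (cut-off, $L\to\infty$, smallness of $\delta$, weight asymptotics) goes through as in Lemma \ref{l1}.
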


\begin{proof}
 We multiply \eqref{Main} by $w(U)\phi$ to obtain
 \begin{equation}\label{5}
   \begin{array}{lll}
  &\D\left(\df{w(U)}{2}\phi^2\right)_t+\left((wg)'(U)\df{\phi^2}{2}-\mu m w(U)\df{\phi\phi_\xi}{U^{1-m}}-w(U)\phi G\right)_\xi\\[14pt]
  &\D-(wg)''(U)U_\xi\df{\phi^2}{2}+\left(\df{\mu m w'(U)U_\xi}{U^{1-m}}-(w'g)(U)\right)\phi\phi_\xi+\mu m w(U)\df{\phi_\xi^2}{U^{1-m}}\\[14pt]
  &\D=Fw(U)\phi-w'(U)U_\xi G \phi-w(U) G\phi_\xi.
\end{array}
 \end{equation}
 Note that $\frac{\mu m w'(U)U_\xi}{U^{1-m}}-(w'g)(U)=0$ referring to \eqref{Hu}. Furthermore, we have  $(wg)''(U)=2$, which leads to
\begin{equation}
  -(wg)''(U)U_\xi\df{\phi^2}{2}= |U_\xi|\phi^2,~U\in[u_+,u_-].
\end{equation}
Multiplying \eqref{5} by the cut-off function $\eta(\xi)$ defined in Lemma \ref{cutoff} and integrating the resultant equation over ${\rm R}$, we get
\begin{equation}\label{de}
\begin{array}{lll}
  &\df{{\rm d}}{{\rm d}t}\displaystyle \int_{-2L}^{2L}\D\df{w(U)}{2}\phi^2\eta\,{\rm d}\xi+\int_{-2L}^{2L}|U_\xi|\phi^2\eta\,{\rm d}\xi+\int_{-2L}^{2L}\mu m\dfrac{w(U)\phi_\xi^2}{U^{1-m}}\eta\,{\rm d}\xi{\rm d}\tau\\[14pt]
  & \D =\int_{-2L}^{2L}Fw(U)\phi\eta\,{\rm d}\xi\,{\rm d}\xi+\int_{-2L}^{2L}-w'(U)U_\xi G \phi\eta\,{\rm d}\xi+\int_{-2L}^{2L}-Gw(U) \phi_\xi\eta\,{\rm d}\xi\\[14pt]
&\D+\int_{-2L}^{2L}\eta_\xi(\xi)\left((wg)'(U)\df{\phi^2}{2}-\mu m w(U)\df{\phi\phi_\xi}{U^{1-m}}-w(U)\phi G\right)\,{\rm d}\xi.
\end{array}
\end{equation}
Note that by virtue of \eqref{Nt2} and \eqref{pri2}, we have ${\phi}\in L^2_{loc}({\rm R})$ and
  $\frac{\phi_\xi}{U}\in L^2_{loc}({\rm R})$
  for $t\in[0,T]$. Furthermore, it holds that
   \begin{equation}
    \int_{-2L}^{2L}{\phi^2}+\df{\phi^2}{U^2}\,{\rm d}\xi\leq  \int_R{\phi^2}+\df{\phi^2}{U^2}\,{\rm d}\xi\leq N_2(t)\leq \delta.
  \end{equation}
Then, we get
\begin{equation}
  \int_{-2L}^{2L}\eta_\xi(\xi)(wg)'(U)\df{\phi^2}{2}\,{\rm d}\xi\leq C\int_{-2L}^{2L}|\eta_\xi(\xi)|{\phi^2}\,{\rm d}\xi\leq \df{C}{L}\int_{-2L}^{2L}{\phi^2}\,{\rm d}\xi\leq \df{C\delta}{L}.
\end{equation}
Thanks to the Cauchy-Schwarz inequality, we have
\begin{equation}
\begin{array}{lllll}
  &\D\int_{-2L}^{2L}-\mu m\eta_\xi(\xi) w(U)\df{\phi\phi_\xi}{U^{1-m}}\,{\rm d}\xi\\[14pt]
  &\D\leq C \int_{-2L}^{2L}|\eta_\xi(\xi)| |w(U)|\df{|\phi||\phi_\xi|}{U^{1-m}}\,{\rm d}\xi\\[14pt]
  &\D\leq \df{C}{L} \int_{-2L}^{2L}\phi^2+\df{\phi_\xi^2}{U^{2-2m}}\,{\rm d}\xi\\[14pt]
  & \D\leq \df{C}{L}\int_{-2L}^{2L}\phi^2+\df{\phi_\xi^2}{U^{2}}\,{\rm d}\xi\\[14pt]
  &\D\leq \df{C\delta}{L}.
\end{array}
 \end{equation}
 Similarly, by virtue of Lemma \ref{cutoff}, \eqref{inf2} and Lemma \ref{G2}, we have
 \begin{equation}
 \begin{array}{llll}
    &\D\int_{-2L}^{2L}-w(U)\phi G\eta_\xi(\xi)\,{\rm d}\xi\\[14pt]
    &\D\leq C\int_{-2L}^{2L}\df{|w(U)||\phi||\phi_\xi|^2 |\eta_\xi(\xi)|}{U^{2-m}}\,{\rm d}\xi\\[14pt]
    &\D\leq \df{C}{L}\left\|{\phi(t)}\right\|_{L^\infty} \int_{-2L}^{2L}\df{\phi_\xi^2}{U^{2-m}}\,{\rm d}\xi\\[14pt]
    &\D\leq \df{C\delta}{L}\int_{-2L}^{2L}\df{\phi_\xi^2}{U^{2}}\,{\rm d}\xi\\[14pt]
    &\D\leq\df{C\delta^2}{L}.
 \end{array}
 \end{equation}
 Hence, we obtain
 \begin{equation}\label{de23}
\begin{array}{lll}
  &\df{{\rm d}}{{\rm d}t}\displaystyle \int_{-2L}^{2L}\D\df{w(U)}{2}\phi^2\eta\,{\rm d}\xi+\int_{-2L}^{2L}|U_\xi|\phi^2\eta\,{\rm d}\xi+\int_{-2L}^{2L}\mu m \dfrac{w(U)\phi_\xi^2}{U^{1-m}}\eta\,{\rm d}\xi\\[14pt]
  & \D \leq C\left(\int_{-2L}^{2L}Fw(U)\phi\eta\,{\rm d}\xi+\int_{-2L}^{2L} -w'(U)U_\xi G \phi\eta\,{\rm d}\xi+\int_{-2L}^{2L}-Gw(U) \phi_\xi\eta\,{\rm d}\xi+\df{\delta+\delta^2}{L}\right).
\end{array}
\end{equation}
Integrating the above equation over $[0,t]$ and taking $L\rightarrow+\infty$, we have
\begin{equation}\label{de33}
\begin{array}{lll}
  &\D\|\phi(t)\|_{w}^2+\int_0^t\int_R|U_\xi|\phi^2\,{\rm d}\xi{\rm d}\tau+\int_0^t\int_R\dfrac{w(U)\phi_\xi^2}{U^{1-m}}\,{\rm d}\xi{\rm d}\tau\\[14pt]
  & \D \leq C\|\phi(0)\|_{w}^2+C\int_0^t\int_R |Fw(U)\phi|\,{\rm d}\xi{\rm d}\tau+C\int_0^t\int_R|w'(U)U_\xi G \phi|\,{\rm d}\xi{\rm d}\tau\\[14pt]
  &\D\quad+C\int_0^t\int_R|Gw(U) \phi_\xi|\,{\rm d}\xi{\rm d}\tau.
\end{array}
\end{equation}
For the second term on the right hand side of \eqref{de33}, by applying \eqref{inf2} and Lemma \ref{F2}, we get
\begin{equation}\label{l18}
\begin{array}{lll}
&\D \int_0^t\int_R|Fw(U)\phi|\,{\rm d}\xi{\rm d}\tau \\[14pt]
&\D\leq C\int_0^t\int_R w(U)\phi_\xi^2|\phi|\,{\rm d}\xi{\rm d}\tau\\[14pt]
 &\D \leq C\int_0^t\left\|{\phi}\right\|_{L^\infty}\int_R{w(U)\phi_\xi^2}\,{\rm d}\xi{\rm d}\tau\\[14pt]
 &\D\leq C\delta \int_0^t\int_R\df{w(U)\phi_\xi^2}{U^{1-m}}\,{\rm d}\xi{\rm d}\tau.
\end{array}
\end{equation}
Similarly, by virtue of \eqref{inf2} and Lemma \ref{G2}, for the last term on the right hand side of \eqref{de33}, we have
\begin{equation}\label{l19}
\begin{array}{lll}
   &\D \int_0^t\int_R|Gw(U)\phi_\xi|\;{\rm d}\xi{\rm d}\tau \\[14pt]
   &\D\leq C\int_0^t\int_R \df{w(U)|\phi_\xi|^3}{U^{2-m}}{\rm d}\xi{\rm d}\tau\\[14pt]
  &\D\leq C\int_0^t\left\|\df{\phi_\xi}{U}\right\|_{L^\infty}\int_R\df{w(U)|\phi_\xi|^2}{U^{1-m}}\,{\rm d}\xi{\rm d}\tau\\[14pt]
  &\D\leq C\delta\int_0^t\int_R\df{w(U)|\phi_\xi|^2}{U^{1-m}}\,{\rm d}\xi{\rm d}\tau.
\end{array}
\end{equation}
Next, we estimate  the third term on the right hand side of \eqref{de}. When  $s=f'(u_+)$, we have
\begin{equation}
  |g(U)|\sim {|U|^{k_++1}},~|U_\xi|\sim |U|^{k_++2-m},~\mbox{as}~U\rightarrow u_+,
  \end{equation}
  and
  \begin{equation}
    |g(U)|\sim {|U-u_-|},~|U_\xi|\sim |U-u_-|,~\mbox{as}~U\rightarrow u_-,
  \end{equation}
which implies that
\begin{equation}\label{wg}
  \left|\df{w'(U)}{w(U)}\right|\left|\df{U_\xi}{U}\right|=\left|\df{(2U-u_-)g(U)-g'(U)U(U-u_-)}{g(U)U(U-u_-)}\right|\left|\df{U_\xi}{U}\right|\sim O(1),~\mbox{as}~U\rightarrow u_\pm.
  \end{equation}
  Since there is no singularity for $U\in(u_+,u_-)$, we conclude that \eqref{wg} holds for all $U\in[u_+,u_-]$. We get
  \begin{equation}\label{l21}
  \begin{array}{lll}
    &\D\int_0^t\int_R|w'(U)U_\xi G\phi|\,{\rm d}\xi{\rm d}\tau\\[14pt]
    &\D\leq C\int_0^t\int_R   \left|\df{w'(U)}{w(U)}\right|\left|\df{U_\xi}{U}\right| \df{w(U)\phi_\xi^2|\phi|}{U^{1-m}}\,{\rm d}\xi{\rm d}\tau\\[14pt]
    &\D\leq C\int_0^t\left\|{\phi}\right\|_{L^\infty}\int_R\df{w(U)\phi_\xi^2}{U^{1-m}}\,{\rm d}\xi{\rm d}\tau\\[14pt]
  &\D\leq C\delta\int_0^t\int_R\df{w(U)\phi_\xi^2}{U^{1-m}}\,{\rm d}\xi{\rm d}\tau.
  \end{array}
\end{equation}
Substituting \eqref{l18}, \eqref{l19} and \eqref{l21} into \eqref{de},
taking $\delta$ sufficiently small and determining an appropriate constant parameter $C$, we get \eqref{Le2}.

When $f'(u_+)=s<f'(u_-)$, we have
\begin{equation}
  |w(U)|\sim |U|^{-k_+},~U\sim |\xi|^{-\frac{1}{k_++1-m}},~\mbox{as}~\xi\rightarrow +\infty,
\end{equation}
and
\begin{equation}
  |w(U)|\sim O(1),~\mbox{as}~\xi\rightarrow -\infty.
\end{equation}
We get  that $w(U)\sim {\langle \xi\rangle_+^{\beta_2}}$ and $\frac{w(U)}{U^{1-m}}\sim  {\langle \xi\rangle_+}$ for $\xi\in {\rm R}$, which implies \eqref{Lde}.

Thus the proof is completed.
\end{proof}

Next, we estimate $\phi_\xi$. Using the same weight function as in Lemma \ref{h1}, Lemma \ref{eh1} is derived in a similar manner as follows. It's worth noting that the difference in the proof lies in the different decay rates of the viscous shock waves at far filed as $\xi\rightarrow+\infty$. In this degenerate case, we have $|U_\xi|\leq CU^{k_++2-m}$ and $|U_{\xi\xi}|\leq CU^{2k_++3-2m}$ for $U\in[u_+,u_-]$ .

\begin{lem}\label{eh1}%$k_+\leq 2m$ $m\in(0.5,1)"{/$
Under the same conditions as in Proposition \ref{prop1}, it holds that
\begin{equation}\label{eH11}
\left\|\df{{\phi_\xi(t)}}{U}\right\|^2+\int_0^t\int_R \dfrac{\phi_{\xi\xi}^2}{U^{3-m}} \,{\rm d}\xi{\rm d}\tau \leq C\left\|\df{{\phi_\xi(0)}}{U}\right\|^2
+ C\left\|{{\phi(0)}}\right\|_w^2,
\end{equation}
for $t\in[0,T]$ provided $\delta\ll 1$. Thus, it holds that
\begin{equation}\label{Hde}
\|\phi_\xi(t)\|_{\langle \xi\rangle_+^{\beta_1}}^2+\int_0^t\|\phi_\xi(\tau)\|_{\langle \xi\rangle_+^{\beta_3}}^2\,{\rm d}\tau\leq C\|\phi_\xi(0)\|_{\langle \xi\rangle_+^{\beta_1}}^2+C\|\phi(0)\|_{\langle \xi\rangle_+^{\beta_2}}^2,
\end{equation}
with  $\beta_1={\frac{2}{k_++1-m}}$, $\beta_2={\frac{k_+}{k_++1-m}}$ and $\beta_3={\frac{3-m}{k_++1-m}}$ for $t\in[0,T]$ provided $\delta\ll 1$.
\end{lem}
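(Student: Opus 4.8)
The plan is to follow the scheme of Lemma~\ref{h1} essentially verbatim, substituting the degenerate decay estimates $|U_\xi|\le CU^{k_++2-m}$ and $|U_{\xi\xi}|\le CU^{2k_++3-2m}$ from Lemma~\ref{uxi2} for the non-degenerate ones, Lemmas~\ref{F2}--\ref{G2} for Lemmas~\ref{F}--\ref{G}, and the lower-order bound of Lemma~\ref{le2} for that of Lemma~\ref{l1}. Concretely, I would first introduce the regularized weight $U_\varepsilon=U+\varepsilon$ with $\varepsilon>0$, multiply the perturbation equation \eqref{Main} by $-\left(\dfrac{\phi_\xi}{U_\varepsilon^2}\right)_\xi$, and expand to obtain an energy identity of exactly the same shape as \eqref{h1s}: a leading term $\left(\tfrac12\phi_\xi^2/U_\varepsilon^2\right)_t$, a total $\xi$-derivative, the good dissipation $\mu m\,\phi_{\xi\xi}^2/(U^{1-m}U_\varepsilon^2)$, two coefficient terms $Q_1(U)\phi_\xi^2$ and $Q_2(U)\phi_\xi\phi_{\xi\xi}$, and the right-hand side $\dfrac{\phi_\xi}{U_\varepsilon^2}F_\xi-\left(\dfrac{\phi_\xi}{U_\varepsilon^2}\right)_\xi G_\xi$. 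Integrating over ${\rm R}\times[0,t]$ disposes of the derivative term; no cut-off is needed here, mirroring Lemma~\ref{h1}.

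Next I would bound the coefficient terms. Because $k_+\ge1$, the estimate $|U_\xi|\le CU^{k_++2-m}$ makes the analogue of $Q_1(U)$ bounded on $[u_+,u_-]$ --- the singularity present in the non-degenerate case disappears --- and Cauchy--Schwarz gives $|Q_2(U)\phi_\xi\phi_{\xi\xi}|\le C\big(\sigma\,\phi_{\xi\xi}^2/(U^{1-m}U_\varepsilon^2)+\tfrac1\sigma\,\phi_\xi^2U^{2k_+-1-m}\big)$ with $2k_+-1-m>0$, so the last factor is bounded. Then I would estimate the right-hand side using Lemmas~\ref{F2}--\ref{G2} together with the $L^\infty$ bounds \eqref{inf2}: after Cauchy--Schwarz with a parameter $\sigma$, the $F_\xi$- and $G_\xi$-terms contribute a small multiple $C\delta$ of $\int_0^t\int_R\phi_{\xi\xi}^2/(U^{1-m}U_\varepsilon^2)\,{\rm d}\xi{\rm d}\tau$ plus $C\delta(\tfrac1\sigma+1)\int_0^t\int_R w\phi_\xi^2/U^{1-m}\,{\rm d}\xi{\rm d}\tau$ (every stray power of $U$ being bounded since $k_+\ge1$), while the top-order cubic piece of $G_{\xi\xi}$ is absorbed, exactly as in \eqref{g92}, by a Sobolev/Gagliardo--Nirenberg interpolation that uses $\|\phi_{\xi\xi}/U\|^2\le CN_2(t)\le C\delta$. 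Choosing first $\sigma$ and then $\delta$ sufficiently small moves the $\phi_{\xi\xi}$ terms to the left side, and Lemma~\ref{le2} controls the leftover $\phi_\xi$ spacetime integral by $C\|\phi(0)\|_w^2$.

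Finally I would let $\varepsilon\to0$ and apply Fatou's lemma, as in Lemma~\ref{h1}, to upgrade $1/U_\varepsilon^2$ to $1/U^2$ and $1/(U^{1-m}U_\varepsilon^2)$ to $1/U^{3-m}$, which yields \eqref{eH11}; translating into the polynomial weights via the degenerate far-field asymptotics $U(\xi)\sim|\xi|^{-1/(k_++1-m)}$ as $\xi\to+\infty$ and $U(\xi)\sim1$ as $\xi\to-\infty$ (Theorem~\ref{pro}) gives $1/U^2\sim\langle\xi\rangle_+^{\beta_1}$ and $1/U^{3-m}\sim\langle\xi\rangle_+^{\beta_3}$, hence \eqref{Hde}. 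The one point that needs care, precisely as in the non-degenerate case, is the bookkeeping of powers of $U$ in the $G_\xi$- and $G_{\xi\xi}$-terms so that each cubic contribution carries either the dissipative weight $1/(U^{1-m}U_\varepsilon^2)$ or the weight $w/U^{1-m}$ provided by Lemma~\ref{le2}; since $k_+\ge1$ only relaxes these constraints relative to Lemma~\ref{h1}, no additional restriction on the flux $f$ (such as \eqref{M}) is needed in this degenerate setting.
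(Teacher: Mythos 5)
Your proposal follows essentially the same route as the paper: regularize the weight with $U_\varepsilon=U+\varepsilon$, multiply \eqref{Main} by $-\left(\dfrac{\phi_\xi}{U_\varepsilon^2}\right)_\xi$, bound $Q_1$, $Q_2$ using $|U_\xi|\leq CU^{k_++2-m}$ with $k_+\geq1$, absorb the $F_\xi$- and $G_\xi$-contributions via \eqref{inf2}, Lemmas \ref{F2}--\ref{G2} and small $\sigma,\delta$, control the leftover $\displaystyle\int_0^t\!\!\int_R\phi_\xi^2$ by Lemma \ref{le2}, then pass $\varepsilon\to0$ by Fatou and translate to the weights $\langle\xi\rangle_+^{\beta_1},\langle\xi\rangle_+^{\beta_3}$ through the degenerate decay of $U$. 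The only inaccuracy is your invocation of a $G_{\xi\xi}$-term and the interpolation step \eqref{g92}: for this first-derivative estimate the right-hand side only produces $\dfrac{\phi_\xi}{U_\varepsilon^2}F_\xi-\left(\dfrac{\phi_\xi}{U_\varepsilon^2}\right)_\xi G_\xi$, so that machinery is not needed here (it belongs to the second-derivative Lemma \ref{eh2}), and its inclusion is superfluous but harmless.
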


\begin{proof}

Multiplying \eqref{Main} by $-\left(\dfrac{\phi_\xi}{U^2_\varepsilon}\right)_\xi$, we get
\begin{equation}\label{eh1s}
\begin{split}
 & \left(\dfrac{1}{2}\dfrac{\phi^2_\xi}{U^2_\varepsilon}\right)_t -\left(\df{\phi_t\phi_\xi}{U^2_\varepsilon}+g'(U)\dfrac{\phi^2_\xi}{U^2_\varepsilon}-\df{\phi_\xi}{U_\varepsilon^2}F\right)_\xi+Q_1(U)\phi_\xi^2+\mu m \dfrac{1}{U^{1-m}U^2_\varepsilon}\phi^2_{\xi\xi}+Q_2(U)\phi_\xi\phi_{\xi\xi}\\
  &=\dfrac{\phi_\xi}{U^2_\varepsilon} F_\xi-\left(\dfrac{\phi_\xi}{U^2_\varepsilon}\right)_\xi G_{\xi},
\end{split}
\end{equation}
where
\begin{equation}
\begin{array}{ll}
  Q_1(U)=g''(U)\dfrac{U_\xi}{U_\varepsilon^2}+2\mu m(1-m)\dfrac{U^2_\xi}{U^{2-m}U_\varepsilon^3},
 %&=\left(f''(U)U_\varepsilon+{2(1-m)}\left(\df{f(U)}{U}-s\right)\right)\df{U_\xi}{U_\varepsilon^3}.
\end{array}
\end{equation}
and
\begin{equation}
  Q_2(U)=\dfrac{g'(U)}{U_\varepsilon^2}-\dfrac{\mu m(1-m)U_\xi}{U^{2-m}U^2_\varepsilon}
-\dfrac{2\mu mU_{\xi}}{U^{1-m}U_\varepsilon^3}.
\end{equation}
$Q_1(U)$ and $Q_2(U)$ here are defined in the same way  as in Lemma \ref{h1}.
By virtue of the Cauchy-Schwarz inequality for any $\sigma>0$ and $k_+\geq1$, we have
\begin{equation}
 | Q_1(U)|\leq C\left(\df{|U_\xi|}{U^2}+\df{|U_\xi|^2}{U^{5-m}}\right) \leq C\left({U^{k_+-m}}+{U^{2k_+-1-m}}\right)\leq{C},~\mbox{for}~U\in[u_+,u_-],
\end{equation}
and
\begin{equation}
  \left|Q_2(U)\phi_\xi\phi_{\xi\xi}\right|\leq C|U_\varepsilon^{k_+-2}\phi_\xi\phi_{\xi\xi}|\leq C\df{|\phi_\xi\phi_{\xi\xi}|}{U_\varepsilon}\leq C\left(\sigma \dfrac{\phi^2_{\xi\xi}}{U_\varepsilon^2}+\dfrac{1}{\sigma}{\phi^2_{\xi}}\right),~\mbox{for}~U\in[u_+,u_-].
\end{equation}
Integrating \eqref{eh1s} over ${\rm R}\times [0,t]$, we get
\begin{equation}\label{eh1s6}
\begin{array}{ll}
  &\D\int_R \dfrac{\phi^2_\xi(t)}{U^2_\varepsilon}\,{\rm d}\xi+\int_0^t\int_R \dfrac{\phi_{\xi\xi}^2}{U^{1-m}U_\varepsilon^2} \,{\rm d}\xi{\rm d}\tau\\[14pt]
  &\D\leq C\int_R \dfrac{\phi^2_\xi(0)}{U^2_\varepsilon}\,{\rm d}\xi +C\sigma \int_0^t\int_R\dfrac{\phi^2_{\xi\xi}}{U^{1-m}U_\varepsilon^2}\,{\rm d}\xi{\rm d}\tau\\[14pt]
&\D\quad+C\left(\dfrac{1}{\sigma}+1\right)\int_0^t\int_R{\phi^2_{\xi}}\,{\rm d}\xi{\rm d}\tau\D+C\int_0^t\int_R\dfrac{\phi_\xi}{U^2_\varepsilon} F_\xi\,{\rm d}\xi{\rm d}\tau-C\int_0^t\int_R\left(\dfrac{\phi_\xi}{U^2_\varepsilon}\right)_\xi G_{\xi}\,{\rm d}\xi{\rm d}\tau.
\end{array}
\end{equation}
 Thanks to Lemma \ref{le2}, we have
\begin{equation}\label{et39}
  \int_0^t\int_R{\phi^2_{\xi}}\,{\rm d}\xi{\rm d}\tau\leq C\int_0^t\int_R\df{w(U)\phi^2_{\xi}}{U^{1-m}}\,{\rm d}\xi{\rm d}\tau\leq C\left\|{{\phi(0)}}\right\|_w^2.
\end{equation}
As for the last two terms of \eqref{eh1s6}, by virtue of Lemma \ref{uxi2}-\ref{G2}, we have
\begin{equation}
   \begin{array}{lll}
    &\D \int_0^t\int_R\dfrac{\phi_\xi}{U^2_\varepsilon} F_\xi\,{\rm d}\xi{\rm d}\tau\\[14pt]
    &\D\leq C\int_0^t\int_R{U^{k_+-m}}|\phi_\xi|^3+\df{|\phi_\xi|^2|\phi_{\xi\xi}|}{U^2_\varepsilon}\,{\rm d}\xi{\rm d}\tau\\[14pt]
    &\D\leq C  \int_0^t\left\|\df{\phi_\xi}{U}\right\|_{L^\infty}\int_RU^{k_++1-m}{\phi_\xi^2}+\df{|\phi_\xi||\phi_{\xi\xi}|}{U_\varepsilon}\,{\rm d}\xi{\rm d}\tau\\[14pt]
    &\D\leq C\delta\int_0^t\int_R\left(\dfrac{1}{\sigma}+1\right){\phi_\xi^2}+\sigma\df{\phi_{\xi\xi}^2}{U^2_\varepsilon}\,{\rm d}\xi{\rm d}\tau\\[14pt]
    &\D\leq C\delta\int_0^t\int_R\left(\dfrac{1}{\sigma}+1\right){\phi_\xi^2}+\sigma\df{\phi_{\xi\xi}^2}{U^{1-m}U^2_\varepsilon}\,{\rm d}\xi{\rm d}\tau,
   \end{array}
 \end{equation}
 and
 \begin{equation}\label{et41}
   \begin{array}{llll}
       &\D\int_0^t\int_R-\left(\dfrac{\phi_\xi}{U_\varepsilon^2}\right)_\xi G_{\xi}\,{\rm d}\xi{\rm d}\tau\\[14pt]
&\D\leq C\int_0^t\int_R \left(\dfrac{|\phi_{\xi\xi}|}{U^2_\varepsilon}+\dfrac{|\phi_\xi|| U_{\xi}|}{U^{3}_\varepsilon}\right)\left({U}^{k_+-1}{|\phi_\xi|^2}+\dfrac{|\phi_\xi||\phi_{\xi\xi}|}{U^{2-m}}
\right)\,{\rm d}\xi{\rm d}\tau\\[14pt]
%&\D\leq C\int_0^t\int_R \dfrac{|\phi_\xi|^2|\phi_{\xi\xi}||U_\xi|}{U^{3-m}U_\varepsilon^2}+\dfrac{|\phi_\xi|^3|U_\xi|^2}{U^{3-m}U^3_\varepsilon}+\dfrac{|\phi_\xi| |\phi_{\xi\xi}|^2}{U^{2-m}U^2_\varepsilon}+\dfrac{|\phi_\xi|^2|\phi_{\xi\xi}||U_\xi|}{U^{2-m}U^3_\varepsilon} \,{\rm d}\xi{\rm d}\tau\\[14pt]
&\D\leq C\int_0^t\left\|\df{\phi_\xi}{U}\right\|_{L^\infty} \int_R \dfrac{ |\phi_\xi||\phi_{\xi\xi}|}{U_\varepsilon^{2-k_+}}
+{U^{2k_+-1-m}|\phi_\xi|^2}
+\dfrac{|\phi_{\xi\xi}|^2}{U^{1-m}U^2_\varepsilon}+\df{|\phi_\xi||\phi_{\xi\xi}|}{U_\varepsilon}\,{\rm d}\xi{\rm d}\tau\\[14pt]
&\D\leq C\delta\int_0^t\int_R |\phi_\xi|^2
+\dfrac{|\phi_{\xi\xi}|^2}{U^{1-m}U^2_\varepsilon}+\df{|\phi_\xi||\phi_{\xi\xi}|}{U_\varepsilon}\,{\rm d}\xi{\rm d}\tau\\[14pt]
&\D\leq C\delta\int_0^t\int_R\left(\dfrac{1}{\sigma}+1\right){\phi_\xi^2}+(\sigma+1)\df{\phi_{\xi\xi}^2}{U^{1-m}U^2_\varepsilon}\,{\rm d}\xi{\rm d}\tau.
   \end{array}
 \end{equation}
  Substituting \eqref{et39}-\eqref{et41} into \eqref{eh1s6}, taking $\delta$ and $\sigma$ appropriately small and using Fatou's lemma,  we get \eqref{eH11}.

When $f'(u_+)=s<f'(u_-)$, we have
\begin{equation}
U(\xi)\sim\left\{
  \begin{array}{ll}
    |\xi|^{-\frac{1}{k_++1-m}},&~\mbox{as}~\xi\rightarrow+\infty,\\
1,&~\mbox{as}~\xi\rightarrow-\infty,
  \end{array}
  \right.
  \end{equation}
which implies that $\frac{1}{U^{2}}\sim {\langle \xi\rangle_+^{\beta_1}}$ and  $\frac{1}{U^{3-m}}\sim {\langle \xi\rangle_+^{\beta_3}}$ for $\xi\in {\rm R}$. We derive \eqref{Hde}.
 \end{proof}

\begin{lem}\label{eh2}%$k_+\leq 2m$ $m\in(0.5,1)"{/$
Under the same conditions as in Proposition \ref{prop1}, it holds that
\begin{equation}\label{eH21}
\left\|\df{\phi_{\xi\xi}(t)}{U}\right\|^2+\int_0^t\int_R \dfrac{\phi_{\xi\xi\xi}^2}{U^{3-m}} \,{\rm d}\xi{\rm d}\tau \leq C\left\|{\df{\phi_{\xi}(0)}{U}}\right\|_{1}^2
+ C\left\|{\phi(0)}\right\|_w^2,
\end{equation}
for $t\in[0,T]$ provided $\delta\ll 1$,
which implies that for the case $s=f'(u_+)$,
\begin{equation}\label{eH22}
\|\phi_{\xi\xi}(t)\|_{\langle \xi\rangle_+^{\beta_1}}^2+\int_0^t\|\phi_{\xi\xi\xi}(\tau)\|_{\langle \xi\rangle_+^{\beta_3}}^2\,{\rm d}\tau\leq C\|\phi_\xi(0)\|_{1,\langle \xi\rangle_+^{\beta_1}}^2+C\|\phi(0)\|_{\langle \xi\rangle_+^{\beta_2}}^2,
\end{equation}
with  $\beta_1={\frac{2}{k_++1-m}}$, $\beta_2={\frac{k_+}{k_++1-m}}$ and $\beta_3={\frac{3-m}{k_++1-m}}$ for $t\in[0,T]$ provided $\delta\ll 1$.
\end{lem}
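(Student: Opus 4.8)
The plan is to follow the energy scheme of Lemma~\ref{h2}, replacing throughout the non-degenerate decay exponent $2-m$ by the faster one $k_++2-m$ furnished by Lemma~\ref{uxi2}. First I would differentiate \eqref{Main} with respect to $\xi$, multiply the resulting equation by $-\left(\dfrac{\phi_{\xi\xi}}{U_\varepsilon^{2}}\right)_\xi$ with the regularized weight $U_\varepsilon=U+\varepsilon$, and integrate over ${\rm R}\times[0,t]$. Integrating by parts produces on the left-hand side the energy term $\int_R \phi_{\xi\xi}^2(t)/U_\varepsilon^2\,{\rm d}\xi$ and the dissipation $\int_0^t\int_R \phi_{\xi\xi\xi}^2/(U^{1-m}U_\varepsilon^2)\,{\rm d}\xi{\rm d}\tau$, together with lower-order terms whose coefficients are the degenerate analogues of the $E_1(U),\dots,E_4(U)$ appearing in \eqref{32}, and the two nonlinear contributions $\int_0^t\int_R\left(-\dfrac{\phi_{\xi\xi}}{U_\varepsilon^2}\right)_\xi F_\xi\,{\rm d}\xi{\rm d}\tau$ and $\int_0^t\int_R\left(-\dfrac{\phi_{\xi\xi}}{U_\varepsilon^2}\right)_\xi G_{\xi\xi}\,{\rm d}\xi{\rm d}\tau$.

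Next I would estimate the coefficient functions. Using $|U_\xi|\le CU^{k_++2-m}$ and $|U_{\xi\xi}|\le CU^{2k_++3-2m}$ from Lemma~\ref{uxi2}, the vanishing $|g''(U)|\le CU^{k_+-1}$ near $u_+=0$, and the boundedness of $g'$, $g'''$, $U$ on $[u_+,u_-]$, one checks --- exactly as was done for $Q_1$ and $Q_2$ in Lemma~\ref{eh1} --- that the stronger algebraic decay at $u_+=0$ now removes the singularities: the $E_1$-coefficient is bounded, while the contributions carrying $E_2$, $E_3$, $E_4$ involve at most a factor $1/U_\varepsilon$ or a bounded factor, so that the Cauchy--Schwarz inequality splits each of them, for any $\sigma>0$, into a $\sigma$-multiple of the dissipation $\phi_{\xi\xi\xi}^2/(U^{1-m}U_\varepsilon^2)$ plus $C\sigma^{-1}$-multiples of $\phi_{\xi\xi}^2/U^{3-m}$ and $\phi_\xi^2$; the latter two are already controlled by Lemma~\ref{eh1} and Lemma~\ref{le2}.

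The $F_\xi$ contribution is routine: by Lemma~\ref{F2} together with the a priori bound $\|\phi_\xi/U\|_{L^\infty}\le C\delta$ from \eqref{inf2} it is bounded by $C\delta$ times a sum of the good quantities above. The genuinely delicate point --- just as in the non-degenerate case --- is the $G_{\xi\xi}$ term. Inserting the bound for $|G_{\xi\xi}|$ from Lemma~\ref{G2} produces, besides terms absorbable as before, a cubic quantity of the type $\int_0^t\int_R \dfrac{|\phi_{\xi\xi\xi}|}{U_\varepsilon^{2}}\,\dfrac{\phi_{\xi\xi}^2}{U^{2-m}}\,{\rm d}\xi{\rm d}\tau$, together with a similar one having $|U_\xi|/U_\varepsilon^{3}$ in place of $1/U_\varepsilon^{2}$. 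Here I would run exactly the Gagliardo--Nirenberg/Sobolev interpolation of \eqref{g92}: estimate $\|\phi_{\xi\xi}/(U_\varepsilon U^{(1-m)/2})\|_{L^\infty}$ by $\|\phi_{\xi\xi}/(U_\varepsilon U^{(1-m)/2})\|^{1/2}\,\|\phi_{\xi\xi\xi}/(U^{(1-m)/2}U_\varepsilon)\|^{1/2}$ up to lower-order terms, then combine H\"older's inequality with the a priori smallness $\|\phi_{\xi\xi}/U\|^2\le CN_2(t)\le C\delta$ to conclude that this term is dominated by $C\delta(\int_0^t\int_R \phi_{\xi\xi\xi}^2/(U^{1-m}U_\varepsilon^2)\,{\rm d}\xi{\rm d}\tau+\int_0^t\int_R \phi_{\xi\xi}^2/U^{3-m}\,{\rm d}\xi{\rm d}\tau)$.

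Finally, collecting all the estimates, choosing $\sigma$ and then $\delta$ small enough to absorb the bad terms into the left-hand side, letting $\varepsilon\to0$ with Fatou's lemma, and invoking Lemma~\ref{le2} and Lemma~\ref{eh1} for the remaining right-hand sides, I would obtain \eqref{eH21}. The weighted form \eqref{eH22} then follows from the far-field asymptotics $U(\xi)\sim|\xi|^{-1/(k_++1-m)}$ as $\xi\to+\infty$ and $U(\xi)\sim1$ as $\xi\to-\infty$, which yield $1/U^2\sim\langle\xi\rangle_+^{\beta_1}$ and $1/U^{3-m}\sim\langle\xi\rangle_+^{\beta_3}$. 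The step I expect to be the main obstacle is the control of that cubic $G_{\xi\xi}$ term by the interpolation just described; everything else is careful but essentially routine bookkeeping of powers of $U$.
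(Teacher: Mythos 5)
Your proposal is correct and follows essentially the same route as the paper: the paper's own proof of Lemma \ref{eh2} simply says to repeat the argument of Lemma \ref{h2} (differentiate \eqref{Main}, multiply by $-\left(\frac{\phi_{\xi\xi}}{U_\varepsilon^2}\right)_\xi$, integrate over ${\rm R}\times[0,t]$) using \eqref{Nt2}, \eqref{inf2} and Lemmas \ref{uxi2}--\ref{eh1}, which is exactly the adaptation you carry out, including the degenerate coefficient bounds, the absorption of the cubic $G_{\xi\xi}$ term via the interpolation of \eqref{g92}, the limit $\varepsilon\to 0$ by Fatou, and the far-field asymptotics giving \eqref{eH22}.
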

\begin{proof}
  Similarly to the proof of Lemma \ref{h2}, differentiating \eqref{Main} once with respect to $\xi$ and
multiplying it by $-\left(\frac{\phi_{\xi\xi}}{U^2_\varepsilon}\right)_\xi$,  we then integrate it over ${\rm R}\times [0,t]$ and make use of  \eqref{Nt2}, \eqref{inf2} and Lemmas \ref{uxi2}-\ref{eh1} to obtain Lemma \ref{eh2}. Further  details
are omitted here for brevity.
\end{proof}

From Lemmas \ref{le2}-\ref{eh2}, when we choose $\|\phi_\xi(t)\|_{1,\langle \xi\rangle_+^{\beta_1}}^2+\|\phi(t)\|_{\langle \xi\rangle_+^{\beta_2}}^2\leq \delta_0 \leq\frac{\delta}{C} $, we obtain \eqref{finalde}.
Based on Proposition \ref{prop1}, we can similarly prove Theorem \ref{ph2} as we did with Lemma \ref{ph1}.

\begin{proof}[Proof of Theorem \ref{ph2}]
We omit the details.
\end{proof}

\section{Examples and Numerical simulations}

In this section we provide some numerical examples. For simplicity, let $\mu=1$, $u_-=1$ and $u_+=0$.

{\bf Example 1.}~~
Let $f(u)=u^2$, for $0<m\leq\frac{1}{2}$. We consider the Cauchy problem for
\begin{equation}\label{e1}
  u_t+(u^2)_x=\mu (u^m)_{xx},\;\;x\in \rm{R},\,t>0.
\end{equation}
Based on Theorem \ref{pro} for the existence of viscous shock waves, the equation \eqref{e1} admits a unique (up to shift) viscous shock wave solution $U(x-st)$ with the shock speed $s=\frac{f(u_-)}{u_-}=1$. Obviously, $f(u)$ is convex and satisfies Lax's entropy condition $f'(0)<s<f'(1)$. The convexity of $ K(u)=\frac{g(u)}{u^{2m}}=\frac{u(u-1)}{u^{2m}}$ for the stability of viscous shock is also held, namely,
\begin{equation*}
  K''(u)=2(1-m)(1-2m)u^{-2m}+2m(1-2m)u^{-2m-1}\geq0,~\mbox{for}~0<m\leq \df{1}{2}~\mbox{and}~u\in[0,1].
\end{equation*}
In particular, according to \eqref{210} in Theorem \ref{pro}, the viscous shock $U(x-st)=U(\xi)$ behaves as
\begin{eqnarray*}
&&|U(\xi)-0|=O(1)|\xi|^{-\frac{1}{1-m}} \ \ \mbox{ as } \xi\to +\infty, \\
&& |U(\xi)-1|=O(1)e^{-|\xi|/m} \ \ \mbox{ as } \xi\to -\infty.
\end{eqnarray*}
In order to keep the initial data as a perturbation of such a viscous shock, from the stability Theorem \ref{Mthm}, we need the initial data $u_0(x)$ to possess the same decay as the viscous shock $U(x)$ at the far fields, thus we choose  the initial value as
\begin{equation*}
   u_0(x)=\left\{
   \begin{array}{ll}
   \frac{1}{2}{\left(\frac{ 1-m}{ m}x+1\right)^{-\frac{1}{1-m}}},~&x\geq 0,\\[6pt]
   1-\frac{1}{2}e^{\frac{x}{ m}},~&x<0,
   \end{array}\right.
 \end{equation*}
 where $0<m\leq \frac{1}{2}$.
 \noindent

 Now we carry out some numerical computations by selecting different indices $m$: $m=0.5$, \ 0.3, \ 0.1, \ 0.05, respectively. From these numerical results (see Figure 1), we see that the solutions behave like viscous shock waves as time increases. In particular, we observe that the shape of solutions $u(t,x)$ steepens when $m$ approaches $0^+$. This indicates that the effect of the singular fast diffusion to the solution is essential.
\begin{figure}[htbp]
	\begin{center}
		\includegraphics[width=7cm]{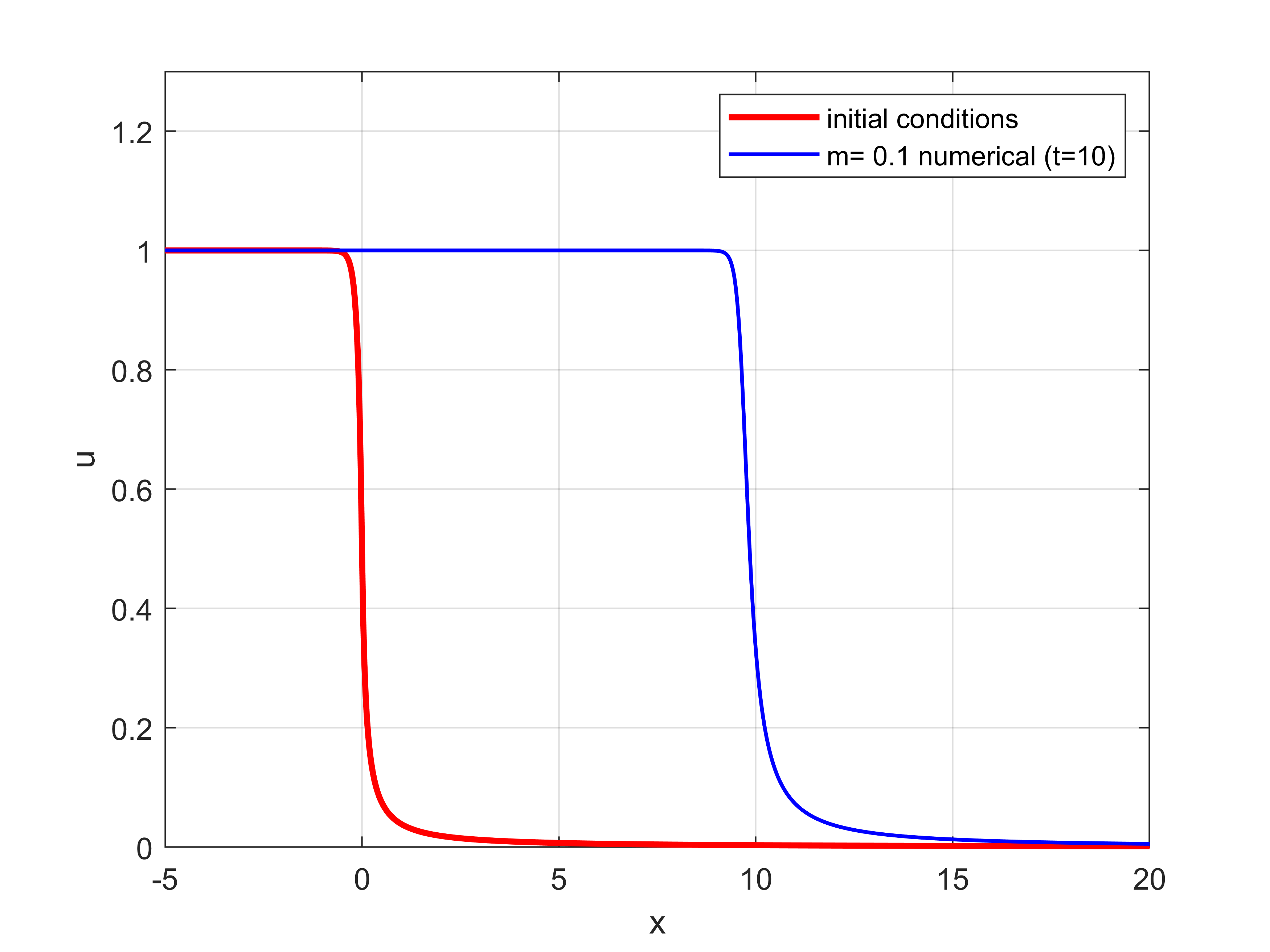}
		\includegraphics[width=7cm]{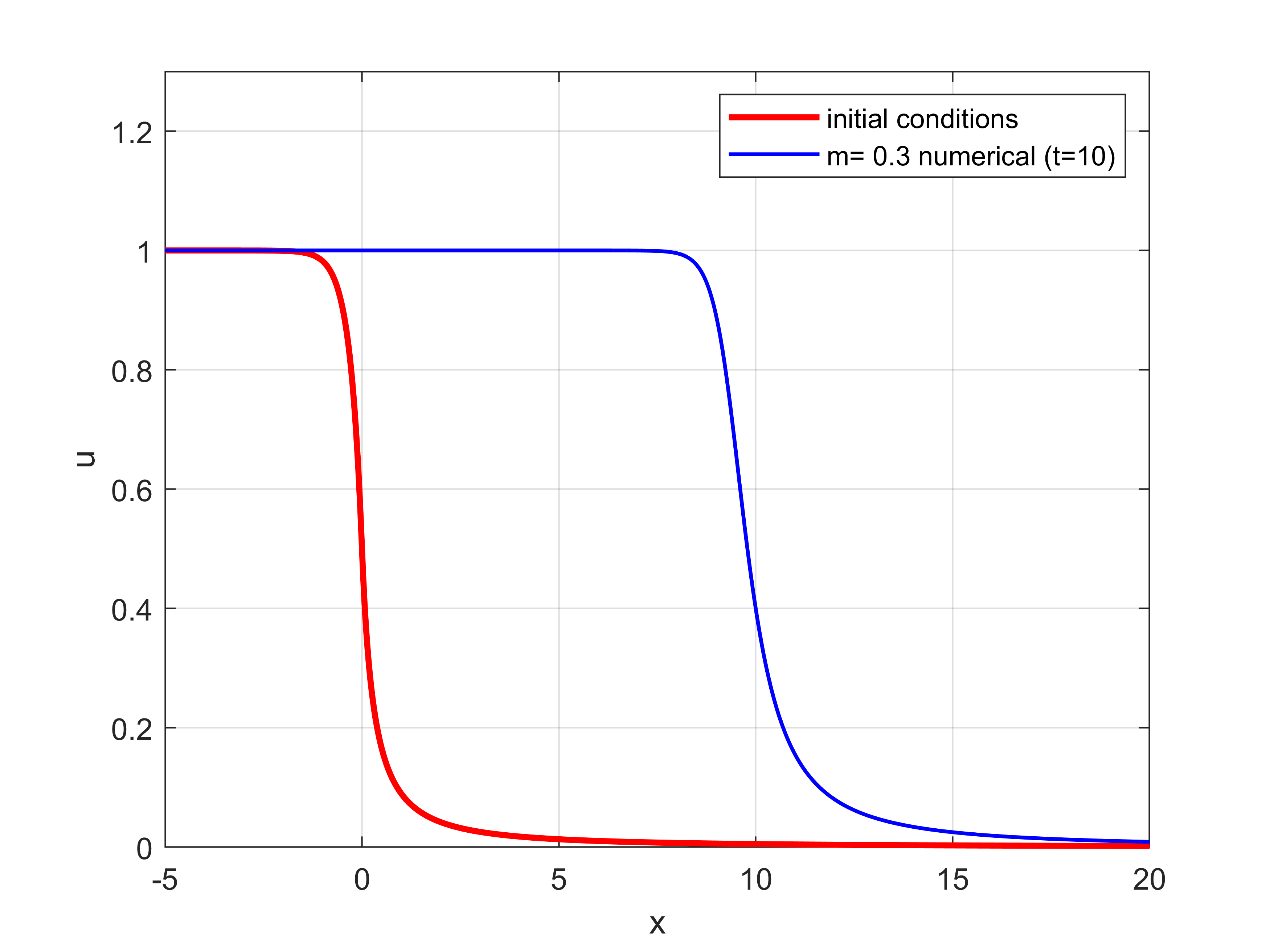}
        \includegraphics[width=7cm]{case1m=01.PNG}
		\includegraphics[width=7cm]{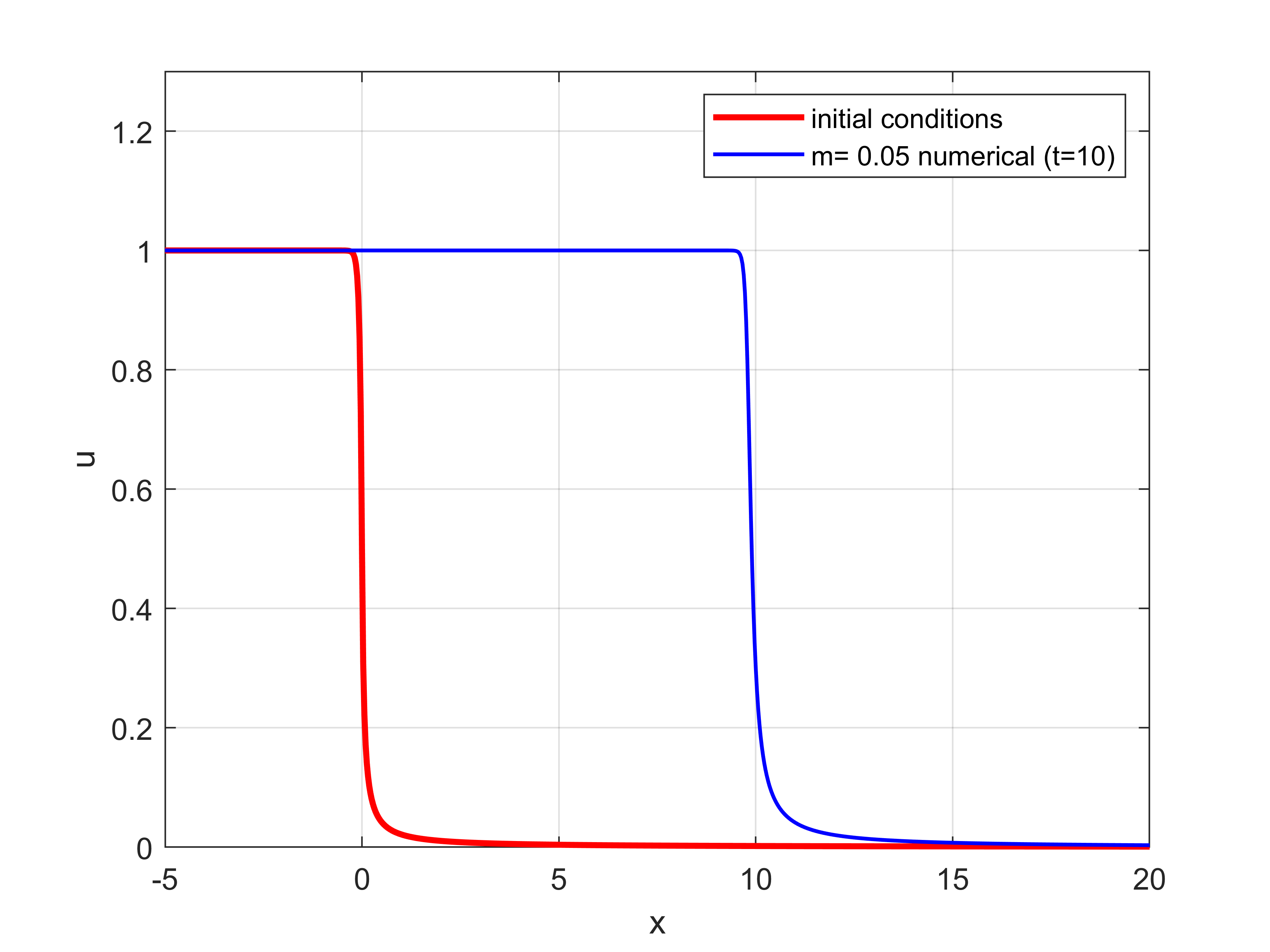}
		\caption{Case 1: $m=0.5, \ 0.3, \ 0.1, \ 0.05$. The solution $u(x,t)$ behaves like a monotone viscous shock wave, and the shapes of the solutions become increasingly steep as the fast-diffusion index $m$ decreases.}
		\label{fig1}
	\end{center}
\end{figure}

{\bf Example 2.}~~
Let $f(u)=2u^{3+2m}-u^{1+2m}$, for $0<m\leq \frac{1}{2}$. The targeted equation is
\begin{equation}\label{e2}
  u_t+(2u^{3+2m}-u^{1+2m})_x=\mu (u^m)_{xx},\;\;x\in \rm{R},\,t>0.
\end{equation}
In this case, the expected viscous shocks $U(x-st)$ are with the speed of $s=\frac{f(u_-)}{u_-}=1$ satisfying  $f'(0)<s<f'(1)$, and from \eqref{210} that
\begin{eqnarray*}
&&|U(\xi)-0|=O(1)|\xi|^{-\frac{1}{1-m}} \ \ \mbox{ as } \xi\to +\infty, \\
&& |U(\xi)-1|=O(1)e^{-\frac{4+2m}{m}|\xi|} \ \ \mbox{ as } \xi\to -\infty.
\end{eqnarray*}
With such information, we take the initial data as
\begin{equation*}
   u_0(x)=\left\{
   \begin{array}{ll}
   \frac{1}{2}{\left(\frac{ (4+2m)(1-m)}{ m}x+1\right)^{-\frac{1}{1-m}}},~&x\geq 0,\\[6pt]
   1-\frac{1}{2}e^{\frac{4+2m}{ m}x},~&x<0,
   \end{array}\right.
 \end{equation*}
 where $0<m\leq \frac{1}{2}$.

 For the above selected non-convex $f(u)$, there exists a unique $m_1^*:=\sqrt{\frac{m(2m+1)}{2(3+2m)(1+m)}}$ such that $f''(m_1^*)=0$ and
\begin{equation*}
  f''(u)\left\{
  \begin{array}{ll}
    <0,~u\in[0,m_1^*),\\[8pt]
    >0,~u\in(m_1^*,1].
  \end{array}\right.
\end{equation*}
In this case, we have $g(u)=f(u)-su=2u^{3+2m}-u^{1+2m}-u$ and $K(u)=\dfrac{g(u)}{u^{2m}}=2u^{3}-u-u^{1-2m}$ satisfying the sufficient condition for the stability
\begin{equation*}
  K''(u)=12u+2m(1-2m)u^{-2m-1}\geq0,~\mbox{for}~0< m\leq\df{1}{2}~\mbox{and}~u\in[0,1].
\end{equation*}
We carry out the numerical computations by selecting $m=0.2$. Figure 2 shows that the solution $u(t,x)$ behaves like a viscous shock as time increases.

\begin{figure}[htbp]
	\begin{center}
		\includegraphics[width=7cm]{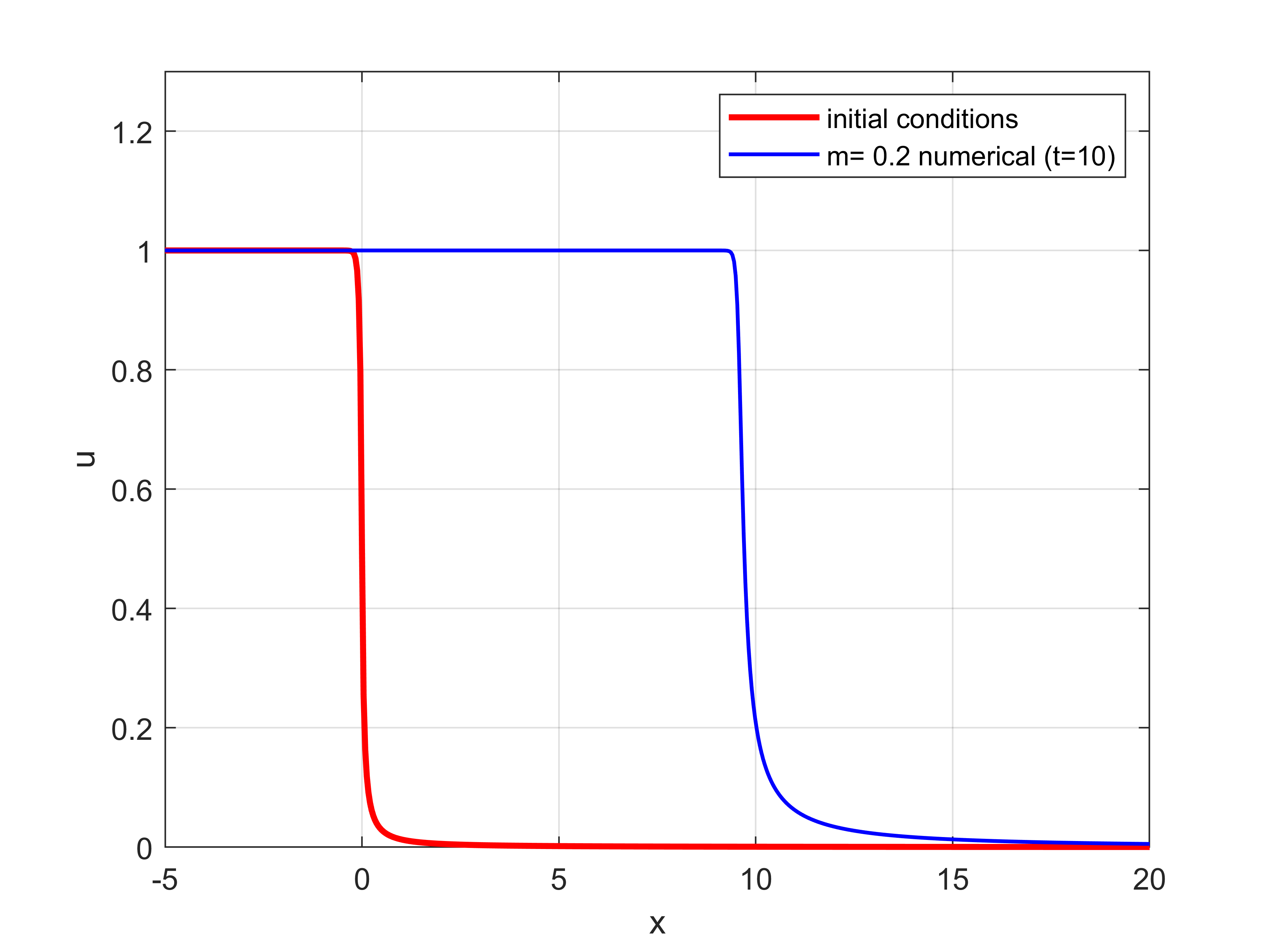}
		\caption{Case 2: $m=0.2$. The solution $u(x,t)$ behaves like a monotone viscous shock wave.}
		\label{fig2}
	\end{center}
\end{figure}

\noindent

{\bf Example 3.}~~
Let $f(u)=u^{2+2m}-u^{2m}$, for $\frac{1}{2}<m<1$. The targeted equation is
\begin{equation}\label{e3}
  u_t+(u^{2+2m}-u^{2m})_x=\mu (u^m)_{xx},\;\;x\in \rm{R},\,t>0.
\end{equation}
The solution possesses a unique (up to shift) viscous shock $U(x-st)$ with the speed $s=\frac{f(u_-)}{u_-}=0$ satisfying the degenerate entropy condition $f'(0)=s<f'(1)$. From \eqref{211}, the decay properties of the viscous shock at the far fields are:
\begin{eqnarray*}
&&|U(\xi)-0|=O(1)|\xi|^{-\frac{1}{2-m}} \ \ \mbox{ as } \xi\to +\infty, \\
&& |U(\xi)-1|=O(1)e^{-\frac{2}{m}|\xi|} \ \ \mbox{ as } \xi\to -\infty.
\end{eqnarray*}
Thus, we choose the initial data as
\begin{equation*}
   u_0(x)=\left\{
   \begin{array}{ll}
   \frac{1}{2}{\left(\frac{ 4-2m}{ m}x+1\right)^{-\frac{1}{2-m}}},~&x\geq 0,\\[6pt]
   1-\frac{1}{2}e^{\frac{2x}{ m}},~&x<0,
   \end{array}\right.
 \end{equation*}
 where $\frac{1}{2}<m<1$.  Here, $f(u)$ is non-convex. In fact, there exists a unique $m_2^*:=\sqrt{\frac{m(2m-1)}{(1+2m)(1+m)}}$ such that $f''(m_2^*)=0$ and
\begin{equation*}
  f''(u)\left\{
  \begin{array}{ll}
    <0,~u\in[0,m_2^*),\\[8pt]
     >0,~u\in(m_2^*,1].
  \end{array}\right.
\end{equation*}
We run our numerical computations by selecting $m=0.6$, \ 0.8, \ 0.9, respectively. Figures 3 confirms that the solutions $u(t,x)$ behaves like viscous shock waves.

\begin{figure}[htbp]
	\begin{center}
		\includegraphics[width=7cm]{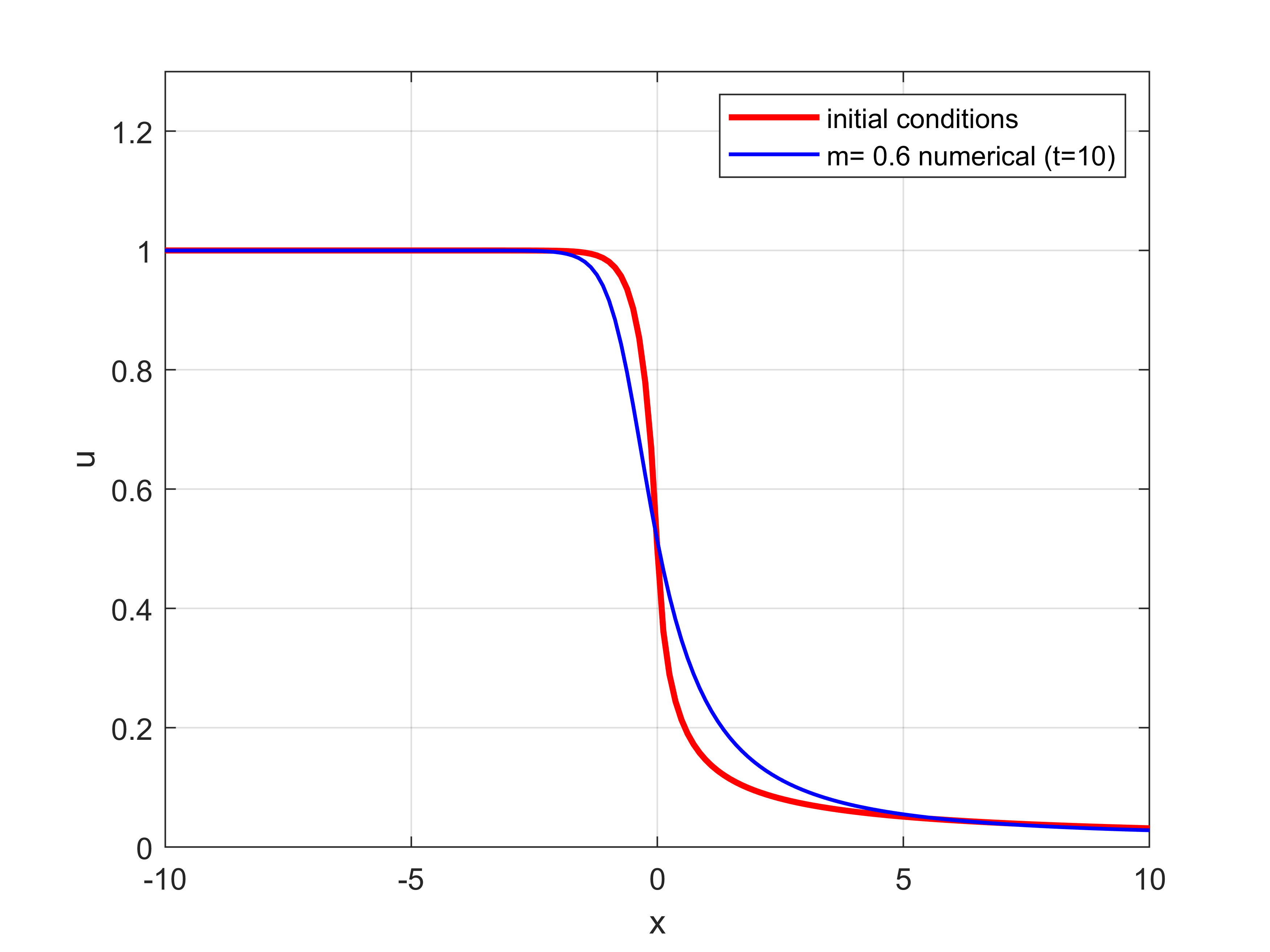}
		\includegraphics[width=7cm]{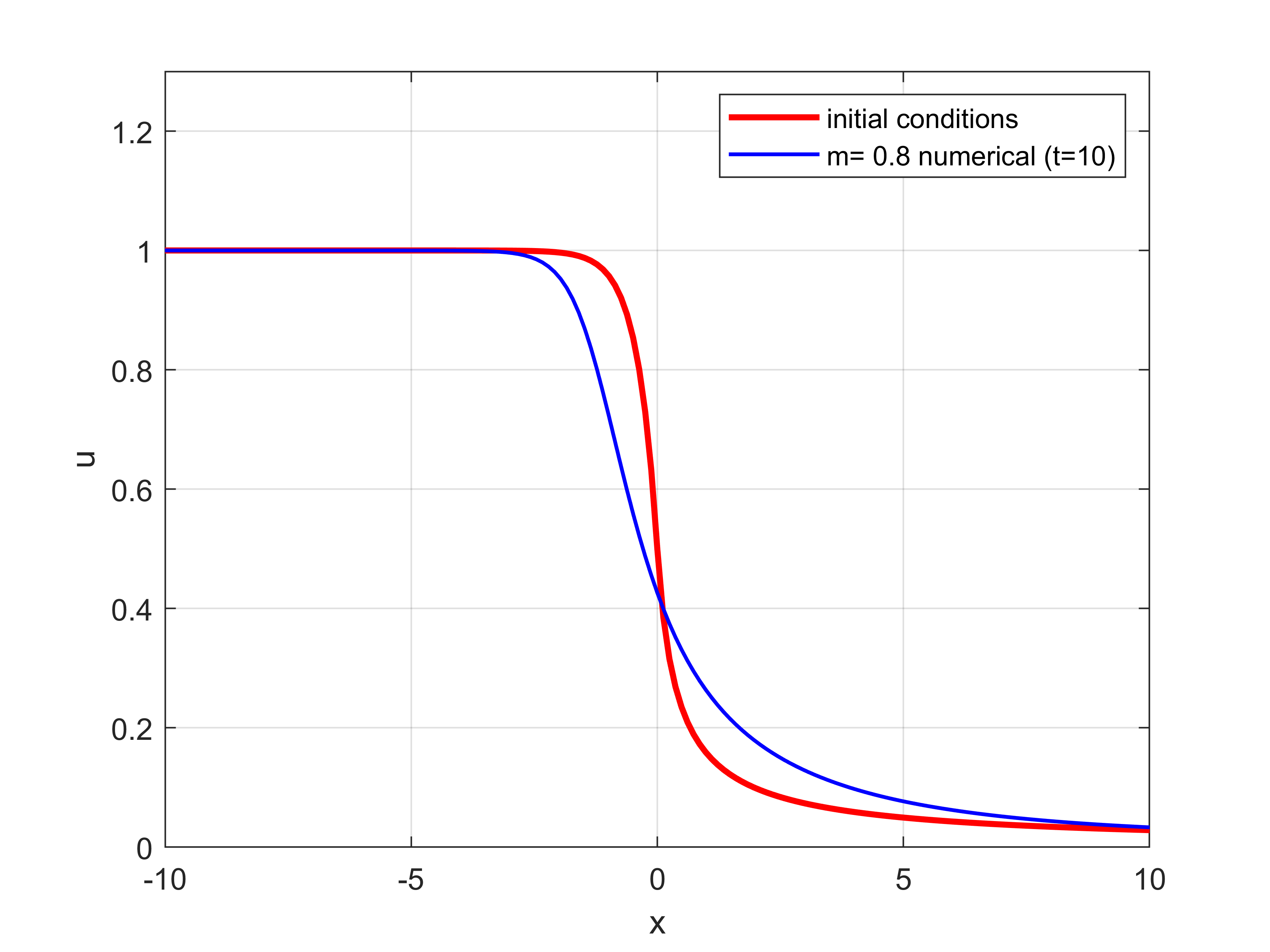}
        \includegraphics[width=7cm]{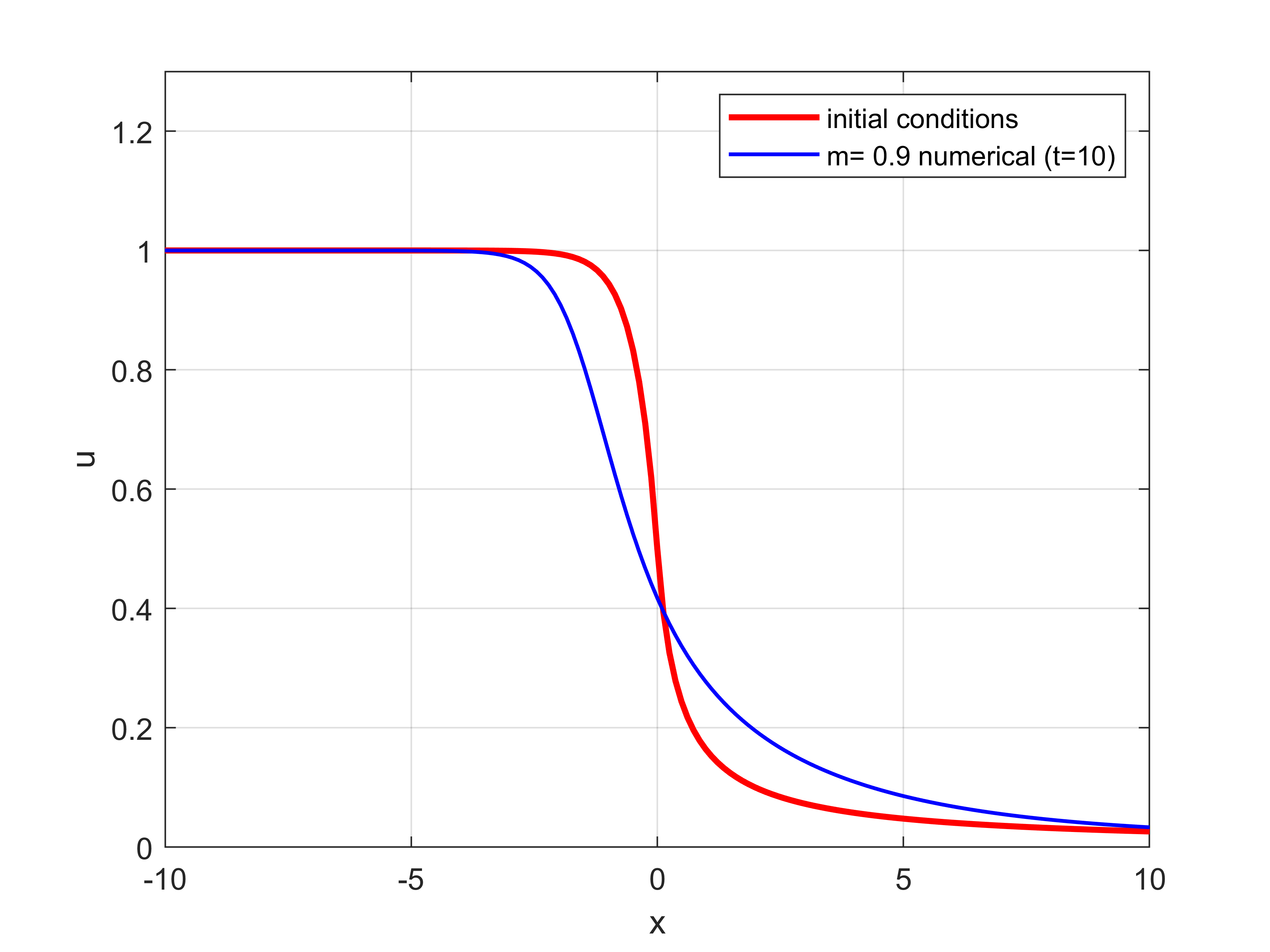}
		\caption{Case 3: $m=0.6, \ 0.8, \ 0.9$. The solution $u(x,t)$ behaves like a monotone viscous shock wave.}
		\label{fig3}
	\end{center}
\end{figure}

{\bf Example 4.}~~Let $f(u)=u^3-u^2$, for $m=\frac{1}{2}$.  The targeted equation is
\begin{equation}\label{e4}
  u_t+(u^3-u^2)_x=\mu (u^\frac{1}{2})_{xx},\;\;x\in \rm{R},\,t>0.
\end{equation}
The solution possesses a unique (up to shift) viscous shock wave $U(x-st)$ with the wave speed $s=\frac{f(u_-)}{u_-}=0$ satisfying  $f'(0)=s<f'(1)$. At the far fields $\xi\to\pm\infty$, this viscous shock wave behaves like
\begin{eqnarray*}
&&|U(\xi)-0|=O(1)|\xi|^{-\frac{3}{2}} \ \ \mbox{ as } \xi\to +\infty, \\
&& |U(\xi)-1|=O(1)e^{-2|\xi|} \ \ \mbox{ as } \xi\to -\infty.
\end{eqnarray*}
Thus, we choose the initial data as
\begin{equation*}
   u_0(x)=\left\{
   \begin{array}{ll}
   \frac{1}{2}{\left(3x+1\right)^{-\frac{2}{3}}},~&x\geq 0,\\[6pt]
   1-\frac{1}{2}e^{2x},~&x<0.
   \end{array}\right.
 \end{equation*}
The flux function $f(u)$ is non-convex:
\begin{equation*}
  f''(u)\left\{
  \begin{array}{ll}
    <0,~u\in[0,{\frac{{1}}{{3}}}),\\[8pt]
     >0,~u\in({\frac{{1}}{{3}}},1].
  \end{array}\right.
\end{equation*}
We carry out our numerical simulations for $m=0.5$. Figure 4 shows that the solution $u(t,x)$ behaves like a viscous shock wave.

\begin{figure}[htbp]
	\begin{center}
		\includegraphics[width=7cm]{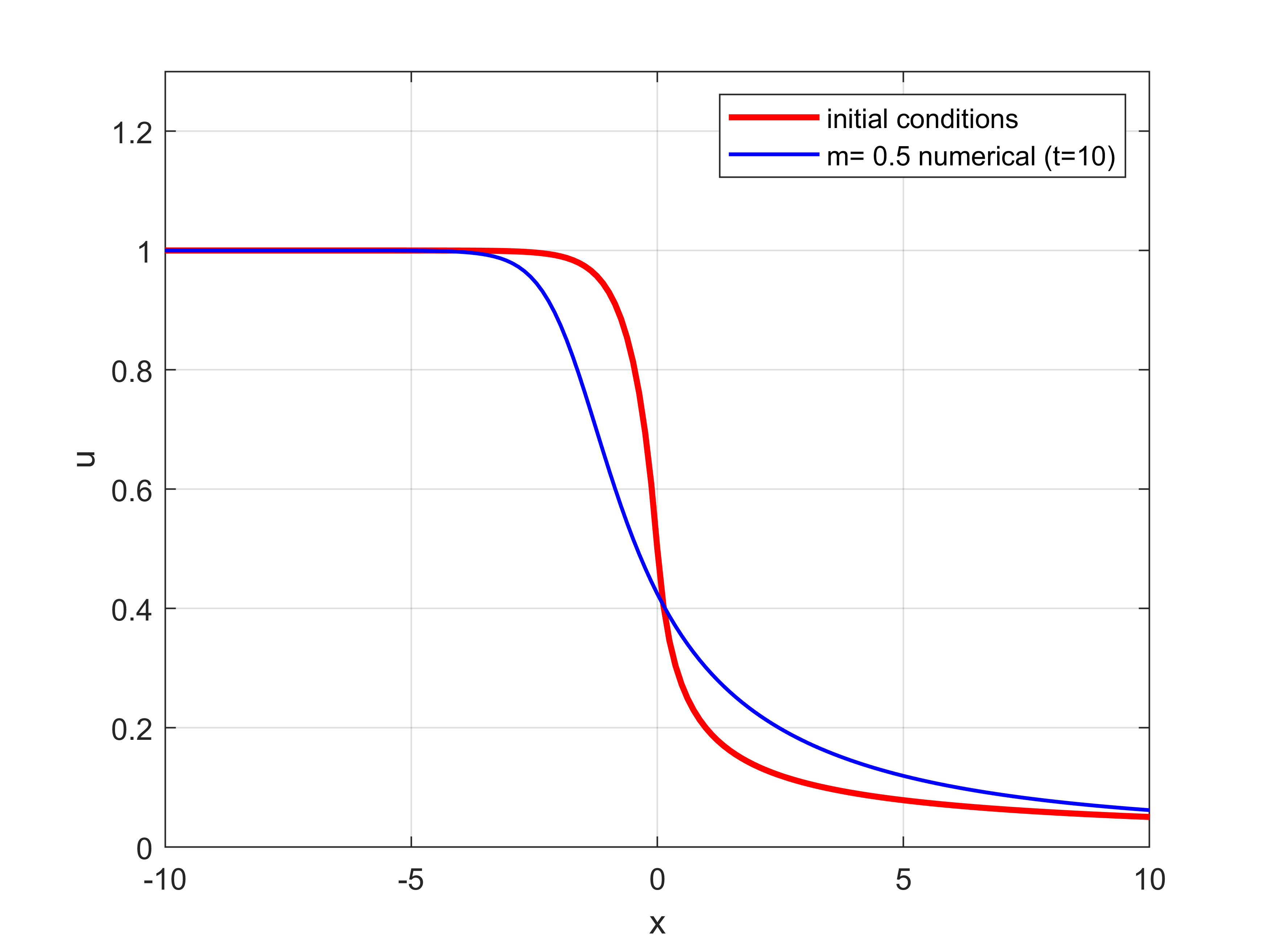}
		\caption{Case 4: $m=0.5$. The solution $u(x,t)$ behaves like a monotone viscous shock wave.}
		\label{fig1}
	\end{center}
\end{figure}

 \section{Acknowledgment}

The paper was done when S. Xu studied at McGill University as a PhD Trainee supported by China Scholarship Council (CSC)(No.202106890040). S. Xu expresses her sincere thanks for the hospitality of McGill University and CSC. The research of M. Mei was supported in part by NSERC Grant RGPIN-2022-03374.  The research of J.-C. Nave was supported in part by the NSERC Discovery Grant program. The research of W. Sheng was supported in part by NSFC of China No.12171305.

\bibliographystyle{amsplain}

\end{document}